\theoremstyle{plain}
\newtheorem{theorem}{Theorem}
\newtheorem{lemma}[theorem]{Lemma}
\newtheorem{corollary}[theorem]{Corollary}
\newtheorem{proposition}[theorem]{Proposition}
 \newenvironment{proofof}[1]{{\bf {\em Proof of #1.}}}{\hfill \rule{2mm}{2mm} 
 }
\newlength{\widebarargwidth}
\newlength{\widebarargheight}
\newlength{\widebarargdepth}
\DeclareRobustCommand{\widebar}[1]{%
  \settowidth{\widebarargwidth}{\ensuremath{#1}}%
  \settoheight{\widebarargheight}{\ensuremath{#1}}%
  \settodepth{\widebarargdepth}{\ensuremath{#1}}%
  \addtolength{\widebarargwidth}{-0.3\widebarargheight}%
  \addtolength{\widebarargwidth}{-0.3\widebarargdepth}%
  \makebox[0pt][l]{\hspace{0.3\widebarargheight}%
    \hspace{0.3\widebarargdepth}%
    \addtolength{\widebarargheight}{0.3ex}%
    \rule[\widebarargheight]{0.95\widebarargwidth}{0.1ex}}%
  {#1}}
\long\def\@makecaption#1#2{
        \vskip 0.8ex
        \setbox\@tempboxa\hbox{\small {\bf #1:} #2}
        \parindent 1.5em  
        \dimen0=\hsize
        \advance\dimen0 by -3em
        \ifdim \wd\@tempboxa >\dimen0
                \hbox to \hsize{
                        \parindent 0em
                        \hfil 
                        \parbox{\dimen0}{\def\baselinestretch{0.96}\small
                                {\bf #1.} #2
                                } 
                        \hfil}
        \else \hbox to \hsize{\hfil \box\@tempboxa \hfil}
        \fi
        }
\newcommand{\half}{\frac{1}{2}}
\newcommand{\normal}{\ensuremath{\mathcal{N}}}
\newcommand{\bigo}{\ensuremath{\mathcal{O}}}
\renewcommand{\P}{\operatorname{\mathbb{P}}}
\newcommand{\E}{\operatorname{\mathbb{E}}}
\newcommand{\R}{\mathbb{R}}
\newcommand{\Rp}{\mathbb{R}_{+}}
\newcommand{\indi}{\mathds{1}}
\newcommand{\imnb}{\mathbf{i}}
\newcommand{\rmd}{\mathrm{d}}
\newcommand{\twonorm}[1]{\left\|#1\right\|_{\ell_2}}
\newcommand{\abs}[1]{\left|#1\right|}
\newcommand{\vct}[1]{\bm{#1}}
\newcommand{\mtx}[1]{\bm{#1}}
\def\BC{\begin{center}}
\def\EC{\end{center}}
\def\BIT{\begin{itemize}}
\def\EIT{\end{itemize}}
\def\BET{\begin{enumerate}}
\def\EET{\end{enumerate}}
\def\BEQ{\begin{equation}}
\def\EEQ{\end{equation}}
\long\def\comment#1{}
\newcommand{\erfc}{\mathsf{erfc}}
\newcommand{\err}{\mathsf{err}}
\newcommand{\A}{\mathcal{A}}
\newcommand{\D}{\mathcal{D}}
\newcommand{\KL}{\mathsf{KL}} 
\newcommand{\TV}{\mathsf{TV}} 
\newcommand{\RS}{\mathsf{RS}} 
\newcommand{\oneRSB}{\mathsf{1RSB}} 
\newcommand{\x}{\textup{\texttt{x}}}
\begin{document}
    
\title{\bf{\LARGE{Finite Size Corrections and Likelihood Ratio Fluctuations in the Spiked Wigner Model}}}
\author{Ahmed El Alaoui\thanks{Department of EECS, UC Berkeley, CA. Email: elalaoui@berkeley.edu}
\and
Florent Krzakala\thanks{Laboratoire de Physique Statistique, CNRS, PSL Universit\'es \& Ecole Normale Sup\'erieure, Sorbonne Universit\'es et Universit\'e Pierre \& Marie Curie, Paris, France.} 
\and
Michael I. Jordan\thanks{Departments of EECS and Statistics, UC Berkeley, CA.}
}

\date{}
\maketitle

\vspace*{-.3in} 
\begin{abstract}
In this paper we study principal components analysis in the regime of high dimensionality and high noise.
Our model of the problem is a rank-one deformation of a Wigner matrix where the signal-to-noise ratio (SNR)
is of constant order, and we are interested in the fundamental limits of detection of the spike.
Our main goal is to gain a fine understanding of the asymptotics for the log-likelihood ratio process,
also known as the free energy, as a function of the SNR. Our main results are twofold. We first prove
that the free energy has a \emph{finite-size correction} to its limit---the \emph{replica-symmetric formula}---which
we explicitly compute. This provides a formula for the Kullback-Leibler divergence between the planted and
null models. Second, we prove that below \emph{the reconstruction threshold}, where it becomes impossible
to reconstruct the spike, the log-likelihood ratio has fluctuations of constant order and converges in
distribution to a Gaussian under both the planted and (under restrictions) the null model. As a consequence,
we provide a general proof of contiguity between these two distributions that holds up to the reconstruction
threshold, and is valid for an arbitrary separable prior on the spike. Formulae for the total variation distance,
and the Type-I and Type-II errors of the optimal test are also given. Our proofs are based on Gaussian
interpolation methods and a rigorous incarnation of the cavity method, as devised by Guerra and Talagrand
in their study of the Sherrington-Kirkpatrick spin-glass model.
     
\end{abstract}

\section{Introduction}
\label{sxn:intro}
Spiked models, which are distributions over matrices of the form ``signal + noise," have been a
mainstay in the statistical literature since their introduction by~\cite{johnstone2001distribution}
as models for the study of high-dimensional principal component analysis that are tractable yet realistic.
Spectral properties of these models have been extensively studied, in particular in random matrix theory,
where they are known as \emph{deformed ensembles}~\citep{peche2014ICM}. Landmark investigations in this
area~\citep{baik2005phase,baik2006eigenvalues,peche2006largest,feral2007largest,capitaine2009largest}
have established the existence of a \emph{spectral threshold} above which the top eigenvalue detaches
from the bulk of eigenvalues and becomes informative about the spike, and below which the top eigenvalue
bears no information. Estimation using the top eigenvector undergoes the same transition, where it is known 
to ``lose track" of the spike below the spectral threshold~\citep{paul2007asymptotics, nadler2008finite,
johnstone2009consistency, benaych2011eigenvalues}. Although these spectral analyses have provided many
insights, as have analyses based on more thoroughgoing usage of spectral data and/or more advanced
optimization-based procedures~\citep[see][and references therein]{amini2008high,berthet2013optimal,
onatski2013asymptotic,onatski2014signal,dobriban2017sharp}, they stop short of characterizing the
fundamental limits of estimating the spike, or detecting its presence from the observation of a
sample matrix. These questions, information-theoretic and statistical in nature, are more naturally
approached by looking at objects such as the posterior law of spike and the associated likelihood
ratio process.    

The main approach to date to the challenging problem of controlling the likelihood ratio
is via the \emph{second moment method}. Controlling the second moment enables one to
show contiguity, in the sense of~\cite{lucien1960locally}, between the planted and null models and
thus declare impossibility of \emph{strong} detection---i.e., the impossibility of vanishing Type-I and
Type-II errors of any given test---in the region where this second moment is
bounded~\citep{banks2017information,perry2016optimality}. This method is known, however, to require careful
conditioning and truncation due to the existence of rare but catastrophic events under which the likelihood
ratio becomes exponentially large. These events thus dominate the second moment, although they are virtually
irrelevant to the detection task. Moreover, even after conditioning the method may fail in identifying
the detection thresholds, depending on the structure of the spike. Furthermore, contiguity has little or no
bearing on the problem of \emph{weak} detection: When errors are inevitable, what is the smallest
error achievable by any test?

Motivated by a desire to overcome these limitations, we consider a particularly simple spiked
model---the \emph{rank-one spiked Wigner model}---and provide an alternative approach to the
detection problem that obviates the use of the second moment method altogether. This is achieved
by obtaining asymptotic distributional results for the log-likelihood ratio process, then appealing
to standard results from the theory of statistical experiments. We are thereby able to provide
solutions to both the strong and weak variants of the detection problem.
To study the likelihood ratio in this setting we build on the technology developed by Aizenman,
Guerra, Panchenko, Talagrand, and many others, in their study of the Sherrington-Kirkpatrick
(SK) spin glass-model.  Specifically, we make use of Gaussian interpolation methods and Talagrand's
cavity method.  

\subsection{Setup and summary of the results}%
In the spiked Wigner model, one observes a rank-one deformation of a Wigner matrix ${\bm W}$:  
\begin{equation}\label{spiked_wigner_model}
\mtx{Y} = \sqrt{\frac{\lambda}{N}} \vct{x}^* \vct{x}^{*\top} + \mtx{W},
\end{equation}
where $W_{ij} = W_{ji} \sim \normal(0,1)$ and $W_{ii} \sim \normal(0,2)$ are independent for all $1 \le i \le j \le N$. The \emph{spike} vector $\vct{x}^* \in \R^N$ represents the signal to be recovered, or its presence detected.  
We assume that the entries $x^*_i$ of the spike are drawn i.i.d.\ from a prior distribution $P_{\x}$ on $\R$ having \emph{bounded} support. The parameter $\lambda \ge 0$ plays the role of the signal-to-noise ratio, and the scaling by $\sqrt{N}$ is such that the signal and noise components of the observed data
are of comparable magnitudes. This places the problem in a high-noise regime where consistency is not possible but partial recovery still is. 
As a matter of convenience, we discard the diagonal terms $Y_{ii}$ from the observations.  (Adding the diagonal back does not pose any additional technical difficulties, and our results can be straightforwardly extended to this case.)          
We endow the real line with the Borel $\sigma$-algebra and define $P_{\x}$ on it. 
We denote by $\P_{\lambda}$ the joint probability law of the observations $\mtx{Y}=\{Y_{ij} : 1\le i < j \le N\}$ as per~\eqref{spiked_wigner_model} and define the likelihood ratio
\begin{equation}\label{likelihood_ratio}
L(\mtx{Y};\lambda) :=  \frac{\mathrm{d}\mathbb{P}_{\lambda}}{\mathrm{d}\mathbb{P}_{0}}(\mtx{Y}).
\end{equation}
A simple computation based on conditioning on $\vct{x}^*$ reveals that
\begin{equation}\label{likelihood_ratio_explicit}
L(\mtx{Y};\lambda) = \int \exp\Big(\sqrt{\frac{\lambda}{N}}\sum_{i < j} Y_{ij}x_ix_j -\frac{\lambda}{2N}\sum_{i < j} x_i^2x_j^2\Big) ~\rmd P_{\x}^{\otimes N}(\vct{x}).
\end{equation}
We define the free energy (density) associated with the model $\P_{\lambda}$ to be
\begin{equation}\label{free_energy}
F_N := \frac{1}{N} \E_{\P_{\lambda}}\log L(\mtx{Y};\lambda). 
\end{equation}
We see that $F_N = \frac{1}{N} D_{\KL}(\P_\lambda, \P_0)$, where $D_{\KL}$ is the Kullback-Leibler divergence
between probability measures. (The free energy is usually defined differently in the literature as the
log-normalizing constant in the posterior of $\vct{x}^*$ given $\mtx{Y}$. The two definitions are strictly
equivalent.) It was initially argued via heuristic replica and cavity computations~\citep{lesieur2015phase,lesieur2017constrained}
that $F_N$ converges to a limit $\phi_{\RS}(\lambda)$, which is referred to as the
\emph{replica-symmetric formula}. This formula, variational in nature, encodes in principle a full characterization
of the limits of estimating the spike with non-trivial accuracy. Indeed, various formulae for other
information-theoretic quantities can be deduced from it, including the mutual information between
$\vct{x}^*$ and $\mtx{Y}$, the minimal mean squared error of estimating $\vct{x}^*$ based on $\mtx{Y}$,
and the overlap $|\vct{x}^\top\vct{x}^*|/N$ of a draw $\vct{x}$ from the posterior $\P_\lambda(\cdot | \mtx{Y})$
with the spike $\vct{x}^*$. Most of these claims have subsequently been proved rigorously in a series of
papers~\citep{deshpande2014information,deshpande2016asymptotic,barbier2016mutual,krzakala2016mutual,lelarge2016fundamental} under various assumptions on the prior.  However, these results stop short
of providing explicit characterizations of thresholds for the detection problem. 

The main goal of this paper is to gain a more refined understanding of the asymptotic behavior of the log-likelihood ratio $\log L(\mtx{Y};\lambda)$, and its mean $NF_N$, under $\P_{\lambda}$ as $N$ becomes large. We first determine \emph{the finite-size correction} of $F_N$ to its limit $\phi_{\RS}(\lambda)$: we prove (under conditions on $P_{\x}$) that $N(F_N - \phi_{\RS}(\lambda))$ converges to a limit $\psi_{\RS}(\lambda)$ with rate $\bigo(1/\sqrt{N})$. Besides providing an explicit rate of convergence of $F_N$ to its limit, this result translates into a formula for the Kullback-Leibler divergence $D_{\KL}$, which is particularly interesting below the reconstruction threshold: we will see that in this regime $\phi_{\RS}(\lambda) = 0$, so $D_{\KL}$ ceases to be extensive in the size of the system and converges to a finite value $\psi_{\RS}(\lambda)$. 

Second, we prove that in this same regime, the log-likelihood ratio $\log L(\mtx{Y};\lambda)$ has fluctuations of constant order under $\P_{\lambda}$, and converges asymptotically to a Gaussian with a mean equal to half the variance.
This allows us to provide an alternative proof of contiguity between $\P_{\lambda}$ and $\P_{0}$, valid in the
entire regime where contiguity can possibly hold, as well as a formula for the Type-II error for testing
between these two distributions.   

Under the null distribution $\P_0$ on the other hand, the model is equivalent to the widely studied
Sherrington-Kirkpatrick model (provided that $P_{\x} = \half \delta_{-1} + \half \delta_{+1}$).
In one of the first rigorous results on this model, \cite{aizenman1987some}
proved that in the high-temperature regime and in the absence of an external field, the fluctuations of
the log-partition function of the model about its mean, which is given by the ``annealed" computation,
are asymptotically Gaussian with explicit mean and variance. By mapping their result into our setting,
we obtain the fluctuations of $\log L(\mtx{Y};\lambda)$ under $\P_{0}$ in the non-reconstruction phase,
as well as a formula for the Type-I error. Although we only obtain this last formula for the Rademacher
prior, we conjecture its validity for arbitrary priors. An interesting symmetry emerges from these
results: the limiting Gaussians under $\P_{\lambda}$ and $\P_{0}$ have means of equal magnitude and
opposite signs, and equal variances. This symmetry causes the Type-I and Type-II errors to be equal.  
Adding up the two latter quantities, we obtain a formula for the total variation distance
$D_{\TV}(\P_{\lambda},\P_{0})$. 

Our results are in the spirit of those of~\cite{onatski2013asymptotic,onatski2014signal}, who studied 
the likelihood ratio of the joint eigenvalue densities under the spiked covariance model with a sphericity prior, 
and showed its asymptotic normality below the spectral threshold. 
Their results thus pertain to eigenvalue-based tests while ours are for arbitrary tests, albeit for a simpler model.   
Our results show in particular that it is still possible to distinguish the planted model
$\P_{\lambda}$ from the null model $\P_{0}$ with non-vanishing probability below the reconstruction
threshold; i.e., even when estimation of the spike $\vct{x}^*$ becomes impossible. Performing such a test
in practice of course hinges on the computational problem of efficiently computing the likelihood
ratio; we leave open this question of constructing computationally efficient tests in the non-reconstruction
phase.\footnote{If one observes the diagonal, then one can test using the trace of $\mtx{Y}$.
This test would still however be suboptimal.}

\subsection{Background}
\paragraph{The $\RS$ formula.} For $r \ge 0$, consider the function
\begin{equation}\label{log_partition_scalar_gaussian_channel}
\psi(r) := \E_{x^*,z} \log \int \exp\left(\sqrt{r}zx + r xx^* - \frac{r}{2} x^2\right) \rmd P_{\x}(x),
\end{equation}
where $z \sim\normal(0,1)$, and $x^* \sim P_{\x}$. 
This is the $\KL$ divergence between the distributions of the random variables $y = \sqrt{r}x^*+z$ and $z$. We define the Replica-Symmetric ($\RS$) potential 
\begin{equation}\label{RS_potential}
F(\lambda, q) := \psi(\lambda q) -\frac{\lambda q^2}{4},
\end{equation}
and finally define the $\RS$ formula
\begin{equation}\label{RS_formula}
\phi_{\RS}(\lambda) := \sup_{q \ge 0}~ F(\lambda, q).
\end{equation}
A central result in this context is that free energy $F_N$ converges to the $\RS$ formula for all $\lambda \ge 0$ ~\citep{lesieur2015phase,lesieur2017constrained,deshpande2016asymptotic,barbier2016mutual,krzakala2016mutual,lelarge2016fundamental}:    
\[F_{N} ~~ \longrightarrow ~~\phi_{\RS}(\lambda).\] 
The values of $q$ that maximize the $\RS$ potential and their properties play an important role in the theory. \cite{lelarge2016fundamental}    
proved that the map $q \mapsto F(\lambda,q)$ has a unique maximizer $q^* = q^*(\lambda)$ for all $\lambda \in \D$ where $\D$ is the set of points where the function $\lambda \mapsto \phi_{\RS}(\lambda)$ is differentiable. 
By convexity of $\phi_{\RS}$ (see next section), $\D = \R_+ \setminus \mbox{countable set}$. Moreover, they showed that the map $ \lambda \in \D \mapsto q^*(\lambda)$ is non-decreasing, and 
\begin{equation}\label{limits_q_star}
\lim_{\underset{\lambda \in \D}{\lambda \to 0}} q^*(\lambda) = \E_{P_{\x}}[X]^2, \qquad \text{and} \qquad \lim_{\underset{\lambda \in \D}{\lambda \to \infty}} q^*(\lambda) = \E_{P_{\x}}[X^2].
\end{equation}
One should interpret the value $q^*(\lambda)$ as the best overlap an estimator $\widehat{\theta}(\mtx{Y})$ based on observing $\mtx{Y}$ can have with the spike $\vct{x}^*$. Indeed, the overlap $\big|\vct{x}^\top\vct{x}^*\big|/N$ between the spike $\vct{x}^*$ and a random draw $\vct{x}$ from the posterior $\P_\lambda(\cdot | \mtx{Y})$ should concentrate in the large $N$ limit about $q^*(\lambda)$ (hence the name ``replica-symmetry"). A matrix variant of this result (where one estimates $\vct{x}^*\vct{x}^{*\top}$) was proved in~\citep{lelarge2016fundamental}. In Section~\ref{sxn:convergence_overlaps}, we prove strong (vector) versions of this result where under mild assumptions, optimal rates of convergence are given.

\paragraph{The reconstruction threshold.} The first limit in~\eqref{limits_q_star} shows that when the prior $P_{\x}$ is not centered, it is always possible to have a non-trivial overlap with $\vct{x}^*$ for any $\lambda>0$. On the other hand, when the prior has zero mean, and since $q^*$ is a non-decreasing function of $\lambda$, it is useful to define the critical value of $\lambda$ below which estimating $\vct{x}^*$ becomes impossible:
\begin{equation}\label{lambda_critical}
\lambda_c := \sup \big\{ \lambda >0 ~:~ q^*(\lambda) =0\big\}.
\end{equation}
We refer to $\lambda_c$ as the \emph{critical} or \emph{reconstruction} threshold. The next lemma establishes a natural bound on $\lambda_c$.
\begin{lemma} \label{spectral_bound}
We have
\begin{equation}\label{eq:spectral_bound}
\lambda_c \cdot \left(\E_{P_{\x}}[X^2]\right)^2 \le 1.
\end{equation}
\end{lemma}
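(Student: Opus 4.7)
The plan: establish the contrapositive---if $\lambda \cdot (\E_{P_{\x}}[X^2])^2 > 1$ then $\lambda > \lambda_c$---via a second-order analysis of $q \mapsto F(\lambda, q)$ at $q = 0$. The non-centered case is immediate: if $\E_{P_{\x}}[X] \neq 0$, the first limit in~\eqref{limits_q_star} together with the monotonicity of $\lambda \mapsto q^*(\lambda)$ on $\D$ forces $q^*(\lambda) > 0$ for every $\lambda \in \D$, so $\lambda_c = 0$ and the bound is trivial. The substantive case is $\E_{P_{\x}}[X] = 0$, which I now treat.

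I would compute the Taylor expansion of $\psi$ at $0$ to second order. Since $F(\lambda,0) = \psi(0) = 0$, it suffices to exhibit some $q > 0$ with $F(\lambda, q) > 0$. Writing the scalar channel $Y = \sqrt{r}\,X^* + Z$ with $X^* \sim P_{\x}$ and $Z \sim \normal(0,1)$, a direct conditioning computation identifies $\psi(r) = \tfrac{r}{2}\E[X^2] - I(X^*;Y)$; combined with the I-MMSE identity $I'(r) = \tfrac{1}{2}\MMSE(r)$, this yields $\psi'(r) = \tfrac{1}{2}(\E[X^2] - \MMSE(r))$. In particular $\psi'(0) = \tfrac{1}{2}\E[X]^2 = 0$. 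Expanding the posterior mean $\E[X^*\mid Y] = \sqrt{r}\,Y\,\E[X^2] + O(r)$ for small $r$ then gives $\MMSE(r) = \E[X^2] - r\,(\E[X^2])^2 + O(r^{3/2})$, so that $\psi''(0) = \tfrac{1}{2}(\E[X^2])^2$.

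Substituting these coefficients into the definition of the $\RS$ potential yields
\begin{equation*}
F(\lambda, q) \;=\; \frac{\lambda q^2}{4}\Bigl(\lambda\,(\E_{P_{\x}}[X^2])^2 - 1\Bigr) + O(q^3).
\end{equation*}
When $\lambda\,(\E_{P_{\x}}[X^2])^2 > 1$ and $\lambda \in \D$, the leading term is strictly positive, so $F(\lambda, q) > 0 = F(\lambda, 0)$ for all sufficiently small $q > 0$. This forces the unique maximizer $q^*(\lambda)$ of $F(\lambda, \cdot)$ to be strictly positive, i.e.\ $\lambda > \lambda_c$. Since $\D$ is dense in $\R_+$, letting $\lambda \downarrow 1/(\E_{P_{\x}}[X^2])^2$ along $\D$ gives $\lambda_c \le 1/(\E_{P_{\x}}[X^2])^2$. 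The only genuinely non-routine ingredient will be the small-$r$ expansion of $\MMSE(r)$ (equivalently, the evaluation $\psi''(0) = \tfrac{1}{2}(\E[X^2])^2$); everything else is bookkeeping around the variational formula and the definition of $\lambda_c$.
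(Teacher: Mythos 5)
Your proof is correct and follows essentially the same route as the paper's: a second-order analysis of $q \mapsto F(\lambda,q)$ at $q=0$, using $\psi'(0)=\tfrac12 \E_{P_{\x}}[X]^2=0$ and $\psi''(0)=\tfrac12(\E_{P_{\x}}[X^2])^2$ to conclude that $q=0$ cannot be the maximizer when $\lambda\,(\E_{P_{\x}}[X^2])^2>1$, hence $q^*(\lambda)>0$ and $\lambda_c \le (\E_{P_{\x}}[X^2])^{-2}$. The only difference is that you justify the derivative values via the I-MMSE identity and a small-$r$ expansion of the posterior mean, whereas the paper simply states them; this is presentational rather than a genuinely different argument.
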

\begin{proof}
Indeed, assume that $P_{\x}$ is centered, and let $\lambda > (\E[X^2])^{-2}$. Since $\psi'(0) = \half\E_{P_{\x}}[X]^2 = 0$ and $\psi''(0) = \half(\E_{P_{\x}}[X^2])^2$, we see that $\partial_q F(\lambda, 0) = 0$ and $\partial_q^2 F(\lambda,0) = \frac{\lambda}{2}(\lambda \E_{P_{\x}}[X^2]^2 - 1)>0$. So $q=0$ cannot be a maximizer of $F(\lambda,\cdot)$. Therefore $q^*(\lambda)>0$ and $\lambda \ge \lambda_c$.
\end{proof}
The importance of Lemma~\ref{spectral_bound} stems from the fact that the value $\left(\E_{P_{\x}}[X^2]\right)^{-2}$ is the spectral threshold previously discussed. Above this value, the first eigenvalue of the matrix $\mtx{Y}$ 
leaves the bulk, and is at the edge of the bulk below it~\citep{peche2006largest,capitaine2009largest,feral2007largest}. This value also marks the limit below which the first eigenvector of $\mtx{Y}$ captures no information about the spike $\vct{x}^*$~\citep{benaych2011eigenvalues}. Inequality~\eqref{eq:spectral_bound} can be strict or turn into equality depending on the prior $P_{\x}$. For instance, there is equality if the prior is Gaussian or Rademacher---so that the first eigenvector overlaps with the spike as soon as estimation becomes possible at all---and strict inequality in the case of the (sufficiently) sparse Rademacher prior $P_{\x} = \frac{\rho}{2}\delta_{-1/\sqrt{\rho}} + (1-\rho)\delta_{0} + \frac{\rho}{2}\delta_{+1/\sqrt{\rho}}$. More precisely, there exists a value 
\[\rho^* = \inf \big\{\rho \in (0,1) ~:~ \psi'''(0)<0 \big\} \approx 0.092,\]
such that $\lambda_c = 1$ for $\rho \ge \rho^*$, and $\lambda_c < 1$ for $\rho < \rho^*$. 
In the latter case, the spectral approach to estimating $\vct{x}^*$ fails for $\lambda \in (\lambda_c, 1)$, and it is believed that no polynomial time algorithm succeeds in this region~\citep{lesieur2015phase,krzakala2016mutual,banks2017information}.

\section{Main results}
\label{sxn:main_results}

\subsection{Finite size corrections to the $\RS$ formula}

The results we are about to present hold in a possibly slightly smaller set than $\D$. While uniqueness of $q^*$ only needs first differentiability of the $\RS$ formula, our results need a second derivative to exist. In physics parlance, our results do not hold at values of $\lambda$ at which a particular kind of \emph{first-order phase transition} occurs, namely, one in which the order parameter $q^*$ is not differentiable. The presence of these transitions depends again on the prior $P_{\x}$. For the Gaussian and Rademacher prior, there are no such transitions, while for the sparse Rademacher prior discussed above, there is \emph{one} first-order transition where $q^{*'}$ is not defined for every $\rho < \rho^*$.   
Thus we define the set
\[\A = \big\{\lambda > 0 ~:~ \phi_{\RS} \mbox{ is twice differentiable at } \lambda.\big\}.\]
Since $\phi_{\RS}$ is the point-wise limit of a sequence $(F_N)$ of convex functions, it is also convex. Then by Alexandrov's theorem~\citep{aleksandorov1939almost}, the set $\A$ is of full Lebesgue measure in $\R_+$ (cf.\ $\D = \R_+ \setminus \mbox{countable set}$.) Moreover, we can see that  $(0,\lambda_c) \subset \A$, since if $\lambda \in \A \cap (0,\lambda_c)$, we have $q^*(\lambda) = 0$, therefore $\phi_{\RS}(\lambda) = 0$. By continuity, $\phi_{\RS}$ vanishes on the entire interval $(0,\lambda_c)$.
Our first main result is to establish the existence of a function $\lambda \mapsto \psi_{\RS}(\lambda)$ defined on $\A$ such that either below $\lambda_c$ or above it when the prior $P_{\x}$ is not symmetric about the origin, we have
\[N(F_N - \phi_{\RS}(\lambda)) \longrightarrow \psi_{\RS}(\lambda).\]
An explicit formula for $\psi_{\RS}$ will be given. But first we need to introduce some notation. Let $\lambda \in \A$ and consider the quantities 
\begin{align}\label{4_replica_overlaps}
a(0) = \E \left[\langle x^2\rangle_r^2\right] - q^{*2}(\lambda), \quad
a(1) = \E \left[\langle x^2 \rangle_r \langle x \rangle_r^2\right]  - q^{*2}(\lambda), \quad
a(2) = \E \left[\langle x\rangle_r^4\right]  - q^{*2}(\lambda),
\end{align} 
where 
\[\langle \cdot \rangle_r = \frac{\int \cdot \exp\left(\sqrt{r}zx + r xx^* - \frac{r}{2} x^2\right) \rmd P_{\x}(x)}{\int \exp\left(\sqrt{r}zx + r xx^* - \frac{r}{2} x^2\right) \rmd P_{\x}(x)},\]
with $r = \lambda q^*(\lambda)$ and the expectation operator $\E$ is w.r.t.\ $x^* \sim P_{\x}$ and $z \sim \normal(0,1)$. The Gibbs measure $\langle \cdot \rangle_r$ can be interpreted as the posterior distribution of $x^*$ given the observation $y = \sqrt{r}x^*+z$. (More on this point of view in Section~\ref{sxn:convergence_overlaps}.)
Now let
\begin{gather}\label{eigenvalues}
\begin{aligned}
\mu_1(\lambda) &= \lambda (a(0) -2a(1)+a(2)),\\
\mu_2(\lambda) &= \lambda (a(0) -3a(1)+2a(2)),
\end{aligned}
\end{gather}
and finally define
\begin{align}\label{sigma_RS}
\psi_{\RS}(\lambda) := \frac{1}{4} \left(\log(1-\mu_1) - 2\log(1-\mu_2) + \lambda\frac{4a(1)-3a(2)}{1-\mu_1} - \lambda a(0)\right).
\end{align}
We will prove (Lemma~\ref{latala_guerra}) that $\mu_2 \le \mu_1 <1$ for all $\lambda \in \A$ so that this function is well defined on $\A$. 
\begin{theorem}\label{finite_size_correction}
For $\lambda \in \A$, if either $\lambda<\lambda_c$, or $\lambda > \lambda_c$ and the prior $P_{\x}$ is not symmetric about the origin, then
\[ N\big(F_N - \phi_{\mathsf{RS}}(\lambda)\big) = \psi_{\RS}(\lambda) + \bigo\Big(\frac{1}{\sqrt{N}}\Big),\]
or equivalently, $D_{\KL}(\P_\lambda,\P_0)  = N\phi_{\RS}(\lambda) + \psi_{\RS}(\lambda) + \bigo(1/\sqrt{N})$.
\end{theorem}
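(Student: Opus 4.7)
The plan is to combine Guerra's Gaussian interpolation with Talagrand's cavity method, both specialized to the Bayes-optimal planted setting where the Nishimori identities are available. First, I would introduce an interpolating Hamiltonian $H_t(\vct{x})$ that interpolates between the full spiked Wigner model at $t=1$ and a product of $N$ independent scalar observation channels of effective signal-to-noise ratio $\lambda q^*$ at $t=0$, with $q^* = q^*(\lambda)$. After adjusting an additive constant so that the corresponding free energy $\phi_N(t) = \frac{1}{N}\E\log Z_N(t)$ satisfies $\phi_N(1) = F_N$ and $\phi_N(0) = \phi_{\RS}(\lambda)$ exactly, Gaussian integration by parts applied to the Wigner noise, combined with the Nishimori symmetry between the spike and a Gibbs replica, collapses the various mixed moments in $\phi_N'(t)$ to produce a Guerra-type sum rule of the form
\begin{equation*}
F_N - \phi_{\RS}(\lambda) \;=\; \frac{\lambda}{4}\int_0^1 \E\big\langle (Q - q^*)^2\big\rangle_t \, dt \;+\; \bigo(1/N),
\end{equation*}
where $Q = \frac{1}{N}\sum_i x_i x_i^*$ is the overlap between a replica $\vct{x}$ drawn from the interpolated Gibbs measure $\langle\cdot\rangle_t$ and the spike. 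The strong overlap-convergence results of Section~\ref{sxn:convergence_overlaps} supply the baseline estimate $\E\langle (Q-q^*)^2\rangle_t = \bigo(1/N)$ uniformly in $t$, so the theorem reduces to identifying the limit of $N\int_0^1 \E\langle (Q - q^*)^2\rangle_t\, dt$ with $4\psi_{\RS}(\lambda)/\lambda$, at rate $\bigo(1/\sqrt{N})$.

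To compute this limit I would apply Talagrand's cavity method: single out the last coordinate $x_N$, express the $N$-spin Gibbs measure as a tilt of the $(N-1)$-spin measure by a cavity factor, and Taylor-expand this factor to second order around $q^*$. The resulting one-, two-, and three-spin expectations are controlled using the overlap concentration as remainder bounds. In the $N\to\infty$ limit these expectations factorize into products of the scalar planted-channel averages $\langle\cdot\rangle_r$ with $r = \lambda q^*$, and a short symmetry argument shows that the three independent quadratic combinations that appear are precisely $a(0), a(1), a(2)$ from~\eqref{4_replica_overlaps}, corresponding to zero, one, or two Nishimori partial traces. The expansion is self-referential: the susceptibility of the $N$-spin system reappears on the right-hand side of the cavity identity, giving a $2\times 2$ linear fixed-point equation in the space of bilinear overlap fluctuations. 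Solving this system yields resolvent factors $(1-\mu_1)^{-1}$ and $(1-\mu_2)^{-1}$ with $\mu_1, \mu_2$ the eigenvalues in~\eqref{eigenvalues}, well-posed because of the strict bound $\mu_1 < 1$ from Lemma~\ref{latala_guerra}. Reassembling the Gaussian determinantal contribution (which produces the $\log(1-\mu_1) - 2\log(1-\mu_2)$ terms) with the perturbative shift of the mean overlap (which produces the $\lambda(4a(1) - 3a(2))/(1-\mu_1) - \lambda a(0)$ terms) reproduces the closed form~\eqref{sigma_RS} after integrating over $t \in [0,1]$.

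The main obstacle is tightening the error control to reach the rate $\bigo(1/\sqrt{N})$ rather than the weaker $\smallo(1)$. Each step of the cavity expansion generates residuals involving higher moments of $Q - q^*$ multiplied by sums over $\Theta(N)$ indices, so naively each step costs a factor $\sqrt{N}$; only the Nishimori-driven cancellations, which force several would-be leading contributions to vanish identically, keep the remainders at the stated order. A second delicate point is the uniqueness and non-degeneracy of $q^*(\lambda)$: the whole expansion is anchored at a single replica-symmetric saddle with non-degenerate second derivative, which is guaranteed exactly by $\lambda \in \A$. The excluded case of a symmetric prior with $\lambda > \lambda_c$ is genuinely harder because the $\Z_2$ symmetry forces the overlap to concentrate on the two-point set $\{\pm q^*\}$: the single-point cavity expansion is no longer valid and would have to be replaced by an explicit symmetrization that we do not pursue here.
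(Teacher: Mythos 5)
Your overall strategy is the same as the paper's: a Guerra interpolation sum rule expressing $F_N-\phi_{\RS}(\lambda)$ as a time integral of the overlap variance, followed by a cavity computation of the asymptotic variance $N\,\E\langle(R_{1,*}-q^*)^2\rangle_t$, inverted through a resolvent $(\mtx{I}-t\mtx{A})^{-1}$ whose eigenvalues $\mu_1,\mu_2$ are controlled by Lemma~\ref{latala_guerra}. However, there is a concrete error in your reduction step. The exact sum rule is
\[
F_N - \phi_{\RS}(\lambda) \;=\; \frac{\lambda}{4}\int_0^1\Big(\E\big\langle (R_{1,*}-q^*)^2\big\rangle_t-\frac{1}{N}\,\E\big\langle x_N^2 x_N^{*2}\big\rangle_t\Big)\,\rmd t,
\]
and the second (self-overlap) term, which you absorb into ``$\bigo(1/N)$,'' is of order exactly $1/N$ --- the same order as the correction the theorem is computing. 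After multiplying by $N$ it contributes the constant $-\tfrac{\lambda}{4}a(0)$ to $\psi_{\RS}(\lambda)$, which is why the closed form~\eqref{sigma_RS} ends with $-\lambda a(0)$. Consequently your claimed identity $N\int_0^1\E\langle(Q-q^*)^2\rangle_t\,\rmd t\to 4\psi_{\RS}(\lambda)/\lambda$ is false as stated (the limit is $4\psi_{\RS}(\lambda)/\lambda + a(0)$), and you additionally need a separate cavity argument showing $\E\langle x_N^2x_N^{*2}\rangle_t\to a(0)$ at rate $1/\sqrt{N}$ uniformly enough in $t$ (the paper's Lemma~\ref{diagonal_term}); your later attribution of the $-\lambda a(0)$ term to ``the perturbative shift of the mean overlap'' inside the cavity expansion misplaces its origin and would not produce it.

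A second, smaller inaccuracy: the cavity fixed-point system is not $2\times 2$. In the planted setting the second-moment quantities that close under the cavity step are the three correlations $\E\langle(R_{1,*}-q^*)^2\rangle$, $\E\langle(R_{1,*}-q^*)(R_{2,*}-q^*)\rangle$ and $\E\langle(R_{1,*}-q^*)(R_{2,3}-q^*)\rangle$, giving a $3\times 3$ system with matrix $\mtx{A}$ as in~\eqref{cavity_matrix_2}; its transpose has eigenvalue $\mu_1$ with multiplicity two (in a Jordan block) and $\mu_2$ with multiplicity one, which is precisely what generates the combination $\log(1-\mu_1)-2\log(1-\mu_2)$ and the squared resolvent factor $(1-t\mu_1)^{-2}$ after integrating in $t$. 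Finally, note that the $\bigo(1/\sqrt{N})$ rate is not obtained by Nishimori cancellations alone: one first proves the fourth-moment bound $\E\langle(R_{1,*}-q^*)^4\rangle_t\le K/N^2$ (plus exponentially small terms) and then feeds $\nu(|R_{1,*}-q^*|^3)\le K N^{-3/2}$ back into the second-moment cavity identity; this bootstrap is the step your outline leaves implicit.
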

The theorem asserts that either below the reconstruction threshold, or above it when the prior $P_{\x}$ is not symmetric, the free energy $F_N$ has a finite-size correction of order $1/N$ to its limit $\phi_{\RS}$ and a subsequent term of order $N^{-3/2}$ in the expansion. In the case $\lambda > \lambda_c$ with symmetric prior, the problem is invariant under a sign flip of the spike, so the overlap $\vct{x}^\top \vct{x}^*/N$ has a symmetric distribution, and hence concentrates equiprobably about \emph{two distinct} values $\pm q^*(\lambda)$. Our techniques do not survive this symmetry, and resolving this case seems to require a new approach.
  
We see that $D_{\KL}(\P_\lambda,\P_0)$ is an extensive quantity in $N$ whenever $\phi_{\RS}(\lambda) >0$, or equivalently, $\lambda>\lambda_c$. On the other hand, this $\KL$ is of constant order below $\lambda_c$: 

\vspace{-.3cm}
\paragraph{Centered prior.} Let us consider the case where the prior $P_{\x}$ has zero mean, and unit variance (the latter can be assumed without loss of generality by rescaling $\lambda$), so that Lemma~\ref{spectral_bound} reads $\lambda_c \le 1$. If $\lambda < \lambda_c$, we have $q^*(\lambda) = 0$, $\phi_{\RS}(\lambda) = 0$, and one can check that in this case
\[a(0) = (\E_{P_{\x}}[X^2])^2 = 1, \quad a(1) = \E_{P_{\x}}[X^2]\E_{P_{\x}}[X]^2  = 0, \quad a(2) = \E_{P_{\x}}[X]^4 = 0.\]
Therefore, expression~\eqref{sigma_RS} simplifies to 
\[\psi_{\RS}(\lambda) = \frac{1}{4}\left(-\log\left(1-\lambda \right) - \lambda \right).\] 
By the above calculation, we have a formula for the $\KL$ divergence between $\P_{\lambda}$ and $\P_{0}$ below the reconstruction threshold $\lambda_c$ (see plot in Figure~\ref{minimal_error_KL_TV}):
\begin{corollary}\label{cor_kul_divergence}
Assume the prior $P_{\x}$ is centered and of unit variance. Then for all $\lambda <\lambda_c$,
\begin{equation}\label{kul_divergence}
D_{\KL}(\P_\lambda,\P_0) = \frac{1}{4}\left(-\log\left(1-\lambda\right) - \lambda \right) + \bigo\Big(\frac{1}{\sqrt{N}}\Big).
\end{equation}
\end{corollary}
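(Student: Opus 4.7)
The plan is to derive this corollary as a direct specialization of Theorem~\ref{finite_size_correction} to the centered, unit-variance, sub-critical setting. The strategy consists of three essentially computational steps. First, rewrite Theorem~\ref{finite_size_correction} via the identity $N F_N = D_{\KL}(\P_\lambda,\P_0)$ from the definition~\eqref{free_energy} as $D_{\KL}(\P_\lambda,\P_0) = N\phi_{\RS}(\lambda) + \psi_{\RS}(\lambda) + \bigo(1/\sqrt{N})$. For this we must check $\lambda \in \A$; but the excerpt already establishes $(0,\lambda_c) \subset \A$, so this is automatic.

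Second, I would verify that $\phi_{\RS}(\lambda) = 0$ on $(0,\lambda_c)$. By definition~\eqref{lambda_critical}, $q^*(\lambda) = 0$ on this range, and from~\eqref{RS_potential} we have $F(\lambda, 0) = \psi(0) - 0$. Inspecting~\eqref{log_partition_scalar_gaussian_channel} shows that the integrand is identically $1$ at $r=0$, so $\psi(0) = 0$, giving $\phi_{\RS}(\lambda) = F(\lambda,0) = 0$.

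Third, I would evaluate $\psi_{\RS}(\lambda)$ at the sub-critical parameter $r := \lambda q^*(\lambda) = 0$. At $r=0$ the Gibbs measure $\langle \cdot \rangle_0$ reduces to integration against the prior $P_{\x}$, so centeredness gives $\langle x \rangle_0 = \E_{P_{\x}}[X] = 0$ and unit variance gives $\langle x^2 \rangle_0 = \E_{P_{\x}}[X^2] = 1$. Feeding this into~\eqref{4_replica_overlaps} yields $a(0)=1$, $a(1)=0$, $a(2)=0$, and then~\eqref{eigenvalues} gives $\mu_1(\lambda) = \mu_2(\lambda) = \lambda$. Substituting into~\eqref{sigma_RS},
\[\psi_{\RS}(\lambda) = \frac{1}{4}\Big(\log(1-\lambda) - 2\log(1-\lambda) + \lambda \cdot \frac{0 - 0}{1 - \lambda} - \lambda\Big) = \frac{1}{4}\big(-\log(1-\lambda) - \lambda\big),\]
which is the claimed value. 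Combined with $N\phi_{\RS}(\lambda)=0$, this finishes the proof.

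There is no real technical obstacle: all the heavy lifting is packaged inside Theorem~\ref{finite_size_correction}, and the corollary is just a matter of plugging $q^*=0$, zero mean, and unit variance into the formulas~\eqref{4_replica_overlaps}--\eqref{sigma_RS}. The only sanity check worth recording is that $\lambda < \lambda_c \le (\E_{P_{\x}}[X^2])^{-2} = 1$ by Lemma~\ref{spectral_bound}, so $1-\lambda > 0$ and the logarithm is well defined; moreover $\mu_1 = \mu_2 = \lambda < 1$, consistent with Lemma~\ref{latala_guerra} mentioned in the text.
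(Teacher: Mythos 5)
Your proposal is correct and matches the paper's own derivation exactly: the text preceding the corollary specializes Theorem~\ref{finite_size_correction} by noting $q^*(\lambda)=0$, $\phi_{\RS}(\lambda)=0$, $a(0)=1$, $a(1)=a(2)=0$, hence $\mu_1=\mu_2=\lambda$ and $\psi_{\RS}(\lambda)=\frac{1}{4}(-\log(1-\lambda)-\lambda)$. Your added sanity checks ($(0,\lambda_c)\subset\A$ and $\lambda<1$ so the logarithm is defined) are consistent with what the paper establishes earlier.
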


\paragraph{More information on $\psi_{\RS}$.} Expression~\eqref{sigma_RS} looks mysterious at first sight. Let us briefly explain its origin. A slightly less processed expression for $\psi_{\RS}$ is the following  
\begin{align*}
\psi_{\RS}(\lambda) = \frac{1}{4}\int_0^1 \left(-\frac{\mu_1}{1-t\mu_1} + \frac{2\mu_2}{1-t\mu_2} + \lambda\frac{4a(1) - 3a(2)}{(1-t\mu_1)^2}\right)\rmd t - \frac{\lambda}{4} a(0),
\end{align*}
after which~\eqref{sigma_RS} follows by simple integration. The integrand in the above expression is obtained, as we will show, as the first entry $z(0)$ of the solution $\vct{z}= [z(0),z(1),z(2)]^{\top}$ of the $3 \times 3$ linear system 
\[(\mtx{I}-t\mtx{A})\vct{z} = \vct{a},\]
where $\vct{a} = [a(0), a(1), a(2)]^\top$ and $\mtx{A}$ is the ``cavity" matrix 
\begin{align*}
\mtx{A} := \lambda \cdot
\begin{bmatrix}
a(0) & -2a(1) & a(2)\\
a(1) & a(0) -a(1) -2a(2) & -2a(1) + 3a(2)\\
a(2) & 4a(1) - 6a(2) & a(0)-6a(1)+6a(2)
\end{bmatrix}.
\end{align*}
The above matrix happens to have two eigenvalues which are exactly $\mu_1$ and $\mu_2$. The matrix $\mtx{A}$ and the above linear system will emerge naturally as a result of the cavity method. On the other hand, the integral over the time parameter $t$ is along an interpolation path invented by~\cite{guerra2001sum}, \citep[see also][]{guerra2002thermodynamic} in the context of the Sherrington--Kirkpatrick model, and the integrand can be interpreted as the asymptotic variance in a central limit theorem satisfied by the overlap between two ``replicas" under the law induced by a certain interpolating Gibbs measure. A definition of these notions with the corresponding results can be found in Sections~\ref{sxn:convergence_overlaps} and~\ref{sxn:interpolation_method}. The full execution of the cavity method is relegated to Section~\ref{sxn:cavity_method}.

\subsection{Fluctuations below the reconstruction threshold}
Corollary~\ref{kul_divergence} asserts that below the reconstruction threshold, the expectation of the log-likelihood ratio $\log L(\mtx{Y};\lambda)$ under $\P_{\lambda}$ is of constant order (in $N$) and is asymptotically equal to $(-\log\left(1-\lambda\right) - \lambda)/4$. In this section we are interested in the fluctuations of this quantity about its expectation. It can be seen by a standard concentration-of-measure argument that for all $\lambda > 0$, $\log L(\mtx{Y};\lambda)$ concentrates about its expectation with fluctuations bounded by $\bigo(\sqrt{N})$. While this bound is likely to be of the right order above $\lambda_c$~\citep[this is true for the SK model, see][]{guerra2002central}, it is very pessimistic below $\lambda_c$. Indeed, we will show that the fluctuations are of constant order with a Gaussian limiting law in this regime. 
This phenomenon was noticed early on in the case of the SK model: \cite{aizenman1987some} showed that in the absence of an external field, the log-partition function of this model has (shifted) Gaussian fluctuations about its easily computed ``annealed average" in high temperature. We will directly deduce from their result  a central limit theorem for $\log L(\mtx{Y};\lambda)$ under $\P_{0}$ in the case where the prior $P_{\x}$ is Rademacher. Furthermore, a proof by \cite{talagrand2011mean2} of their result provided us with a road map for proving a similar result under $\P_{\lambda}$. We now present our second main result along with consequences for hypothesis testing. %
For $\lambda <1$, let
\[\mu(\lambda) = \frac{1}{4} \left(-\log(1-\lambda)-\lambda\right), \qquad \mbox{and} \qquad \sigma^2(\lambda) = {2 \mu(\lambda)}.\]
\begin{theorem} \label{central_limit_theorem}
Assume the prior $P_{\x}$ is centered and of unit variance.
\begin{itemize}
\item[(i)] For all $\lambda <\lambda_c$, if $\mtx{Y} \sim \P_{\lambda}$ then
\[\log L(\mtx{Y};\lambda) \rightsquigarrow \normal(\mu,\sigma^2).\]
\item[(ii)] Under the additional condition $P_{\x} = \half \delta_{-1} + \half \delta_{+1}$, if $\mtx{Y} \sim \P_{0}$, then for all $\lambda <1$,
\[\log L(\mtx{Y};\lambda) \rightsquigarrow \normal(-\mu,\sigma^2).\]
\end{itemize}
\end{theorem}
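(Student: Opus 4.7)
The plan is to handle the two parts by related but distinct routes.

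For part (ii), I would exploit the collapse afforded by the Rademacher prior: since $x_i^2\equiv 1$, the self-interaction term $-\frac{\lambda}{2N}\sum_{i<j}x_i^2x_j^2$ in \eqref{likelihood_ratio_explicit} becomes the deterministic constant $-\lambda(N-1)/4$, so the remaining integral is the normalized Sherrington--Kirkpatrick partition function $Z_N(\beta)$ at $\beta=\sqrt{\lambda}$; concretely $L(\mtx{W};\lambda) = Z_N(\beta)/\E_{\P_0}[Z_N(\beta)]$. The classical Aizenman--Lebowitz--Ruelle CLT, given a rigorous proof by Talagrand, states that for $\beta^2<1$ one has $\log(Z_N/\E Z_N)\rightsquigarrow\normal(-v/2,v)$ with $v = -\tfrac{1}{2}\log(1-\beta^2) - \tfrac{\beta^2}{2}$. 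Setting $\beta^2=\lambda$ gives $v = \sigma^2$ and $-v/2 = -\mu$, which is the claim.

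For part (i), I would mirror Talagrand's proof of the ALR CLT, but carry out the computation under the planted measure $\P_\lambda$. The mean is already pinned down by Corollary~\ref{cor_kul_divergence}, so what remains is a CLT for the centered variable. I would target the moment generating function $\Phi_N(u) := \E_{\P_\lambda}[\exp(u(\log L - \E_{\P_\lambda}\log L))]$ and show $\Phi_N(u)\to e^{u^2\sigma^2/2}$ on a neighborhood of $u=0$; by the likelihood-ratio identity $\E_{\P_\lambda}[L^u] = \E_{\P_0}[L^{u+1}]$, this is equivalent to controlling near-unit moments of $L$ under the null. For each positive integer $n$, integrating out the Gaussian entries $W_{ij}$ in $\E_{\P_0}[L^n]$ yields an integral over $n$ independent replicas $\vct{x}^{(1)},\dots,\vct{x}^{(n)}\sim P_\x^{\otimes N}$ of a weight of the form $\exp\!\big(\tfrac{\lambda N}{2}\sum_{\alpha<\beta} R_{\alpha\beta}^2 - \tfrac{\lambda}{2}\sum_{\alpha<\beta} U_{\alpha\beta}\big)$, where $R_{\alpha\beta} = \tfrac{1}{N}\sum_i x_i^{(\alpha)}x_i^{(\beta)}$ and $U_{\alpha\beta} = \tfrac{1}{N}\sum_i (x_i^{(\alpha)})^2(x_i^{(\beta)})^2$. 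Since $\lambda < \lambda_c \le 1$ and the prior is centered of unit variance, a joint CLT for $\{\sqrt{N}\,R_{\alpha\beta}\}_{\alpha<\beta}$ around $0$, together with the law of large numbers $U_{\alpha\beta}\to 1$, converts the replica integral into a Gaussian integral whose value is $(1-\lambda)^{-\binom{n}{2}/2}e^{-\lambda\binom{n}{2}/2} = e^{\binom{n}{2}\sigma^2}$ with $\sigma^2 = -\tfrac{1}{2}\log(1-\lambda)-\tfrac{\lambda}{2} = 2\mu$. Expanding the corresponding cumulant generating function $\binom{u+1}{2}\sigma^2 = u\mu + u^2\mu$ in $u$ is exactly that of $\normal(\mu,\sigma^2)$, so via Nishimori this pins down the claim. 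Gaussian concentration applied to $\mtx{W}\mapsto\log L$ then provides the sub-Gaussian tails needed to upgrade integer-moment convergence to convergence of $\Phi_N(u)$ on a real interval around $0$.

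The main obstacle is the joint overlap CLT for $\{\sqrt N\, R_{\alpha\beta}\}_{\alpha<\beta}$ under the $n$-fold product prior coupled through the quadratic weight $\exp(\tfrac{\lambda N}{2}\sum_{\alpha<\beta} R_{\alpha\beta}^2)$, with enough moderate-deviation control to legitimize the Gaussian approximation uniformly in $N$. This is precisely the cavity-method CLT already developed in the paper for the proof of Theorem~\ref{finite_size_correction}, specialized to the non-reconstruction regime: the quantities in \eqref{4_replica_overlaps} collapse to $a(0)=1$ and $a(1)=a(2)=0$, the $3\times 3$ cavity matrix $\mtx{A}$ reduces to one whose top eigenvalue is exactly $\lambda$, and the condition $\lambda<1$ is precisely what keeps this eigenvalue below unity so the replica integral converges at the required rate. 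Once the overlap CLT is in hand, matching moments and identifying the Gaussian limit is routine algebra.
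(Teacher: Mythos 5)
Your treatment of part (ii) coincides with the paper's: for the Rademacher prior the self-interaction term is deterministic, $L$ is the normalized SK partition function at $\beta=\sqrt{\lambda}$, and the Aizenman--Lebowitz--Ruelle CLT gives the claim for $\beta^2=\lambda<1$. That part is fine.

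For part (i), however, your route has two genuine gaps. First, the identity $\E_{\P_\lambda}[L^u]=\E_{\P_0}[L^{u+1}]$ requires \emph{real} exponents $u$ near $0$ to control the moment generating function of $\log L$, but the replica computation you describe only produces the \emph{integer} moments $\E_{\P_0}[L^n]$. Even if one proved $\E_{\P_0}[L^n]\to e^{\binom{n}{2}\sigma^2}$ for every integer $n$, these are the moments of a lognormal random variable, and the lognormal is the textbook example of a distribution \emph{not} determined by its moments; so convergence of the integer moments of $L$ does not imply $\log L\rightsquigarrow\normal(\mu,\sigma^2)$. Your proposed repair via Gaussian concentration of $\mtx{W}\mapsto\log L$ does not close this: the Lipschitz constant of $\log L$ in $\mtx{W}$ is of order $\sqrt{\lambda N}$, so concentration only gives tails at scale $\sqrt{N}$, which yields $\E e^{u(\log L-\E\log L)}\le e^{Ku^2N}$ --- far too weak to upgrade integer-moment information to MGF convergence at non-integer exponents. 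Second, the convergence $\E_{\P_0}[L^n]\to e^{\binom{n}{2}\sigma^2}$ itself is false for general centered unit-variance priors below $\lambda_c$: the annealed replica integral $\E\exp(\tfrac{\lambda N}{2}\sum_{\alpha<\beta}R_{\alpha\beta}^2-\cdots)$ over the \emph{product} prior can be dominated by exponentially rare large-overlap configurations (rate $e^{-NI(m)}$ versus gain $e^{N\lambda m^2 \binom{n}{2}/2}$), and for priors such as the sparse Rademacher with small $\rho$ this already blows up the second moment for some $\lambda<\lambda_c$. This is precisely the failure of the untruncated second-moment method that the paper's introduction warns about; the cavity bounds of Sections~\ref{sxn:convergence_overlaps}--\ref{sxn:cavity_method} control overlaps under the \emph{planted} Gibbs measure $\E\langle\cdot\rangle$ (where the Nishimori property and the Franz--Parisi bound are available), not under the null-reweighted product measure appearing in your replica integral, so they cannot be invoked to legitimize that Gaussian approximation. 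The paper sidesteps both problems by computing the integer moments of $\log L$ itself (the centered variable $X(\lambda)$), via a differential recursion in $\lambda$ combined with the cavity decorrelation estimate of Proposition~\ref{delicate_overlap_convergence}, and then using that the Gaussian \emph{is} moment-determinate.
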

The symbol $``\rightsquigarrow"$ denotes convergence in distribution as $N \to \infty$. The formal connection to the SK model and the proof of the above theorem are presented in Section~\ref{sxn:proof_fluctuations}. 

A remarkable feature is the symmetry between the above two statements. Roughly speaking, this symmetry takes its roots in the fact that the model under the alternative distribution $\P_{\lambda}$ ``lives on the Nishimori line": under $\P_{\lambda}$, which is the spiked model, the interaction of the spike $\vct{x}^*$ with a replica $\vct{x}^{(1)}$ creates terms that account for twice the contribution of the interaction between two independent replicas $\vct{x}^{(1)}$ and $\vct{x}^{(2)}$, and thus flips the sign of the mean from $-\mu$ to $\mu$. This mechanism will become apparent from the proof. Moreover, the fact that the mean is half the variance in these limiting Gaussians has interesting consequences for hypothesis testing. The next subsection is devoted to this problem.
 
We believe that statement \emph{(ii)} is still valid up to $\lambda_c$ for a general prior (of zero mean and unit variance). It is possible to prove the convergence up to \emph{some} value $\lambda_0 \le \lambda_c$ with essentially the same approach as ours, but reaching the optimal threshold seems to require more technical work. In particular, our interpolation bound (see our ``main estimate", section~\ref{sxn:main_estimate}) has to be significantly improved to deal with this case. Progress will be reported in a future work.  

We also point out that similar fluctuation results were recently proved by~\cite{baik2016fluctuations,baik2017fluctuations} for a spherical model where one integrates over the uniform measure on the sphere in the definition of $L(\mtx{Y}; \lambda)$. Their model, due to its integrable nature, is amenable to analysis using tools from random matrix theory. The authors are thus able to also analyze a ``low temperature" regime (absent in our problem) where the fluctuations are no longer Gaussian  but given by the Tracy-Widom distribution. Their techniques seem to be tied to the spherical case however.  

\subsubsection{Strong and weak detection below $\lambda_c$.} 
Consider the problem of deciding whether an array of observations $\mtx{Y}=\{Y_{ij} : 1\le i < j \le N\}$ is likely to have been generated from $\P_{\lambda}$ for a fixed $\lambda >0$ or from $\P_{0}$. Let us denote by $\bm{H}_0 : \mtx{Y} \sim \P_{0}$ the null hypothesis and $\bm{H}_{\lambda}: \mtx{Y} \sim \P_{\lambda}$ the alternative hypothesis. 
Two formulations of this problem exist: one would like to construct a sequence of measurable tests $T: \R^{N(N-1)/2} \mapsto \{0,1\}$ that returns ``0" for ${\bm H}_0$ and ``1" for ${\bm H}_{\lambda}$, for which either
\begin{align}\label{strong_detection}
\lim_{N\to \infty} ~\max ~\Big\{\P_{\lambda}(T(\mtx{Y}) = 0),~ \P_{0}(T(\mtx{Y}) = 1) \Big\} = 0, 
\end{align}
or less stringently, the total mis-classification error, or risk
\begin{equation}\label{misclassif_error}
\err(T) := \P_{\lambda} (T(\mtx{Y}) = 0) + \P_{0} (T(\mtx{Y}) = 1)
\end{equation}
is minimized among all possible tests $T$. The question of existence of a test that answers to the requirement~\eqref{strong_detection} is referred to as the \emph{strong detection} problem, and the question of minimizing the criterion~\eqref{misclassif_error} is referred to as the \emph{weak detection}, or simply \emph{hypothesis testing} problem.  

\paragraph{Strong detection.}
Using a second moment argument based on the computation of a truncated version of $\E L(\mtx{Y};\lambda)^2$, \cite{banks2017information} and \cite{perry2016optimality} showed that $\P_{\lambda}$ and $\P_{0}$ are mutually contiguous when $\lambda < \lambda_0$, where the latter quantity equals $\lambda_c$ for some priors $P_{\x}$ while it is suboptimal for others (e.g., the sparse Rademacher case, see discussion below). 
It is easy to see that contiguity implies impossibility of strong detection since for instance, if $\P_{0}(T(\mtx{Y})=1) \to 0$ then $\P_{\lambda}(T(\mtx{Y})=0) \to 1$.  
Here we show that Theorem~\ref{central_limit_theorem} provides a more powerful approach to contiguity: 
\begin{corollary}  
Assume the prior $P_{\x}$ is centered and of unit variance. Then for all $\lambda < \lambda_c$, $\P_{\lambda}$ and $\P_{0}$ are mutually contiguous. 
\end{corollary}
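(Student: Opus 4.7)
The plan is to deduce mutual contiguity directly from Theorem~\ref{central_limit_theorem}(i) by means of Le Cam's first and third lemmas; no new probabilistic input is needed. The whole argument rides on one structural observation: the limiting Gaussian $\normal(\mu,2\mu)$ appearing under $\P_\lambda$ in Theorem~\ref{central_limit_theorem}(i) satisfies $\sigma^2 = 2\mu$, i.e.\ mean equals half the variance. This ``Nishimori-type'' identity is exactly what is required to make the exponential of the limit a valid mean-$1$ likelihood-ratio limit.

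For the first direction, $\P_0 \triangleleft \P_\lambda$, I appeal to Le Cam's first lemma, which reduces contiguity to checking that any distributional limit of $d\P_0/d\P_\lambda = L^{-1}$ under $\P_\lambda$ has expectation $1$. From Theorem~\ref{central_limit_theorem}(i), $L^{-1} \rightsquigarrow e^{-W}$ with $W\sim\normal(\mu,2\mu)$, and the Gaussian moment generating function immediately gives $\E[e^{-W}]=e^{-\mu+\mu}=1$.

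For the reverse direction, $\P_\lambda \triangleleft \P_0$, I transfer the distributional limit of $\log L$ from $\P_\lambda$ to $\P_0$ using Le Cam's third lemma. Once $\P_0 \triangleleft \P_\lambda$ is in hand, the third lemma applied to the statistic $T_N=\log L$, together with the joint convergence $(\log L,-\log L)\rightsquigarrow(W,-W)$ under $\P_\lambda$, shows that $\log L \rightsquigarrow W'$ under $\P_0$, where $W'$ has density $p_{W'}(w) = e^{-w}p_W(w)$. A routine square-completion—using $\mu=\sigma^2/2$ precisely to kill the otherwise non-trivial constant—identifies $W'\sim\normal(-\mu,2\mu)$. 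A second application of Le Cam's first lemma, now to $d\P_\lambda/d\P_0 = L$ under $\P_0$, reduces the claim to $\E[e^{W'}]=e^{-\mu+\mu}=1$, which holds by the same Gaussian computation.

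There is essentially no technical obstacle at this stage: all of the real work is buried in Theorem~\ref{central_limit_theorem}(i). The point of this approach, as compared to the truncated second-moment arguments of \cite{banks2017information,perry2016optimality}, is that it extracts contiguity directly from the distributional limit of $\log L$ rather than from $\E_{\P_0}[L^2]$; this both bypasses the catastrophic rare events that inflate the second moment and automatically yields contiguity throughout the full regime $\lambda<\lambda_c$ for an arbitrary centered, unit-variance prior.
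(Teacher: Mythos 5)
Your proof is correct, and its first half coincides with the paper's: both arguments hinge on the identity $\mu=\tfrac12\sigma^2$, which makes $U=e^{-W}$ (the limit in distribution of $\rmd\P_0/\rmd\P_\lambda$ under $\P_\lambda$, obtained from Theorem~\ref{central_limit_theorem}(i) and the continuous mapping theorem) a strictly positive random variable with $\E U=1$, and both invoke Le Cam's first lemma to convert $\E U=1$ into $\P_0\triangleleft\P_\lambda$. Where you diverge is the reverse direction. The paper extracts $\P_\lambda\triangleleft\P_0$ from the \emph{same} limit $U$: Le Cam's first lemma contains a second equivalence, by which $\Pr(U>0)=1$ for every subsequential limit of $\rmd\P_0/\rmd\P_\lambda$ \emph{under} $\P_\lambda$ is already equivalent to $\P_\lambda\triangleleft\P_0$, so no information about the law of $L$ under $\P_0$ is needed. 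You instead route through Le Cam's third lemma to transfer the limit of $\log L$ to $\P_0$, identify it as $\normal(-\mu,\sigma^2)$ by completing the square, and then apply the first lemma a second time. This detour is sound --- you correctly establish $\P_0\triangleleft\P_\lambda$ before invoking the third lemma, which requires it, and the full-sequence convergence ensures every subsequential limit is the one you computed --- but it is strictly longer than necessary for the corollary. It does buy you a genuine bonus, however: the conclusion $\log L\rightsquigarrow\normal(-\mu,\sigma^2)$ under $\P_0$ for an \emph{arbitrary} centered unit-variance prior and all $\lambda<\lambda_c$, i.e., the distributional content of statement (ii) of Theorem~\ref{central_limit_theorem} in exactly the range where the paper only conjectures it (its own proof of (ii), via Talagrand's cavity argument, is restricted to the Rademacher prior but yields the stronger convergence of all moments with an explicit rate).
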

\begin{proof}
A consequence of statement \emph{(i)} in Theorem~\ref{central_limit_theorem} is that if
\[\frac{\rmd \P_{0}}{ \rmd \P_{\lambda}} ~{\rightsquigarrow}~ U\]
under $\P_{\lambda}$ along some subsequence and for some random variable $U$, then by the continuous mapping theorem we necessarily have 
\[U = \exp \normal(-\mu,\sigma^2).\]
We have $\Pr(U>0) = 1$, and since $\mu = \half \sigma^2$, we have $\E U  = 1$. We now conclude using Le Cam's first lemma in both directions~\cite[Lemma 6.4 or Example 6.5,][]{vandervaart2000asymptotic}. 
\end{proof}
This approach allows one to circumvent second moment computations which are not guaranteed to be tight in general, and necessitate careful and prior-specific conditioning that truncates away undesirable events.  
  
 We note that in the case of the sparse Rademacher prior $P_{\x} = \frac{\rho}{2}\delta_{-1/\sqrt{\rho}} +   (1-\rho)\delta_{0} + \frac{\rho}{2}\delta_{+1/\sqrt{\rho}}$, contiguity holds for all $\lambda < 1$ as soon as $\rho \ge \rho^* \approx 0.092$ by the above corollary, thus closing the gaps in the results of~\cite{banks2017information} and~\cite{perry2016optimality}. Indeed, as argued below Lemma~\ref{spectral_bound}, the reconstruction and spectral thresholds are equal ($\lambda_c=1$) for all $\rho \ge \rho^*$, and differ ($\lambda_c < 1$) below $\rho^*$. This implies that strong detection is impossible for $\lambda <1$ and possible otherwise when $\rho \ge \rho^*$, while it becomes impossible only below $\lambda_c $ but possible otherwise when $\rho < \rho^*$.  

\paragraph{Weak detection.} We have seen that strong detection is possible if and only if $\lambda > \lambda_c$. It is then natural to ask whether weak detection is possible below $\lambda_c$, i.e., is it possible to test with accuracy \emph{better than that of a random guess} below the reconstruction threshold? The answer is \emph{yes}, and this is another consequence of Theorem~\ref{central_limit_theorem}. More precisely, the optimal test minimizing the risk~\eqref{misclassif_error} is the likelihood ratio test which rejects the null hypothesis $\bm{H}_0$ (i.e., returns ``1") if $L(\mtx{Y};\lambda) >1$, and its error is
\begin{equation}\label{optimal_error}
\err^*(\lambda) = \P_{\lambda} (L(\mtx{Y};\lambda) \le 1) + \P_{0} (L(\mtx{Y};\lambda) >1) = 1 - D_{\TV}(\P_\lambda,\P_0).
\end{equation}
One can readily deduce from Theorem~\ref{central_limit_theorem} the Type-I and Type-II errors of the likelihood ratio test:  for all $\lambda < \lambda_c$ the Type-II error is  
\[\P_{\lambda}(\log L(\mtx{Y}; \lambda) \le 0) = \int_{-\infty}^{0} \frac{1}{\sqrt{2\pi}}e^{-(t-\mu)^2/2\sigma^2} \mathrm{d}t + o_N(1) = \half\erfc\left(\frac{\sqrt{\mu}}{2}\right) + o_N(1),\]
and in the case of the Rademacher prior, the Type-I error is
\[\P_{0}(\log L(\mtx{Y}; \lambda) > 0) = \int_{0}^{+\infty}\frac{1}{\sqrt{2\pi}}e^{-(t+\mu)^2/2\sigma^2} \mathrm{d}t + o_N(1) = \half\erfc\left(\frac{\sqrt{\mu}}{2}\right) + o_N(1)\]
for all $\lambda <1$. Here, $\erfc(x) = \frac{2}{\sqrt{\pi}} \int_{x}^\infty e^{-t^2}\rmd t$ is the complementary error function.
These can be combined into a formula for $\err^*(\lambda)$ and the total variation distance between $\P_{\lambda}$ and $\P_{0}$ (see plot in Figure~\ref{minimal_error_KL_TV}): 
\begin{corollary} 
Assume $P_{\x} = \half \delta_{-1} + \half \delta_{+1}$. For all $\lambda <1$, we have
\begin{equation}\label{total_variation}
\lim_{N\to \infty} \err^*(\lambda) = 1-\lim_{N\to \infty} D_{\TV}(\P_{\lambda},\P_{0}) = \erfc\left(\frac{\sqrt{\mu(\lambda)}}{2}\right).
\end{equation}
\end{corollary}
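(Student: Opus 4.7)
The plan is to derive the stated limit as a direct consequence of Theorem~\ref{central_limit_theorem}, combined with the Neyman--Pearson characterization of the optimal test. First I would invoke the standard identity relating Bayes risk to total variation: for any test $T$, writing $A=\{T=1\}$, one has $\P_\lambda(T=0)+\P_0(T=1)=1-(\P_\lambda(A)-\P_0(A))$, so minimizing over $T$ yields $\err^*(\lambda)=1-D_{\TV}(\P_\lambda,\P_0)$, and the infimum is attained by the likelihood ratio test rejecting $\bm H_0$ when $L(\mtx Y;\lambda)>1$. This immediately justifies the first equality in~\eqref{total_variation} and reduces the problem to evaluating the limits of the Type-I and Type-II errors of the likelihood ratio test.

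Next I would write the Bayes risk as
\[
\err^*(\lambda)=\P_{\lambda}\!\big(\log L(\mtx Y;\lambda)\le 0\big)+\P_{0}\!\big(\log L(\mtx Y;\lambda)>0\big)
\]
and invoke Theorem~\ref{central_limit_theorem}. Part (i) gives $\log L(\mtx Y;\lambda)\rightsquigarrow \normal(\mu,\sigma^2)$ under $\P_\lambda$ and part (ii) (which requires the Rademacher assumption on $P_{\x}$) gives $\log L(\mtx Y;\lambda)\rightsquigarrow \normal(-\mu,\sigma^2)$ under $\P_0$; both are valid for all $\lambda<1$, which is the range we care about since $\lambda_c=1$ in the Rademacher case. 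Because $0$ is a continuity point of both Gaussian limit laws, weak convergence implies convergence of the relevant CDF values, so
\[
\P_{\lambda}\!\big(\log L\le 0\big)\longrightarrow \Phi\!\left(-\mu/\sigma\right), \qquad \P_{0}\!\big(\log L>0\big)\longrightarrow 1-\Phi\!\left(\mu/\sigma\right)=\Phi\!\left(-\mu/\sigma\right),
\]
where $\Phi$ is the standard normal CDF. The symmetry of the two limits (same variance, opposite means) is precisely why the two error probabilities coincide in the limit.

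Finally I would plug in the variance relation $\sigma^2=2\mu$. Then $\mu/\sigma=\sqrt{\mu/2}$, and using $\Phi(-x)=\tfrac12\erfc(x/\sqrt 2)$ yields $\Phi(-\mu/\sigma)=\tfrac12\erfc(\sqrt\mu/2)$. Summing the two error contributions gives
\[
\lim_{N\to\infty}\err^*(\lambda)=\erfc\!\left(\sqrt{\mu(\lambda)}/2\right),
\]
which together with the first equality proves the full chain in~\eqref{total_variation}. There is no real analytic obstacle here: all the difficulty has already been absorbed into Theorem~\ref{central_limit_theorem}. The only point that warrants minor care is verifying that the CDF convergence at the threshold $0$ is justified (which is immediate from continuity of the Gaussian law) and that the matching of Type-I and Type-II errors comes from the fact that the two limiting means are antipodal and the variance is exactly twice the absolute mean—this is the one place where the Nishimori-type identity $\sigma^2=2\mu$ is used substantively.
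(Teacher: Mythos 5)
Your proposal is correct and follows essentially the same route as the paper: the identity $\err^*(\lambda)=1-D_{\TV}(\P_\lambda,\P_0)$ together with the two halves of Theorem~\ref{central_limit_theorem}, evaluated at the continuity point $0$ and summed, with the relation $\sigma^2=2\mu$ yielding $\tfrac12\erfc(\sqrt{\mu}/2)$ for each error term. Nothing is missing.
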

We similarly conjecture that the formula for Type-I error, hence formula~\eqref{total_variation}, should be correct up to $\lambda_c$ for all (bounded) priors with zero mean and unit variance. 
\begin{figure}
\centering
\includegraphics[width=7.5cm]{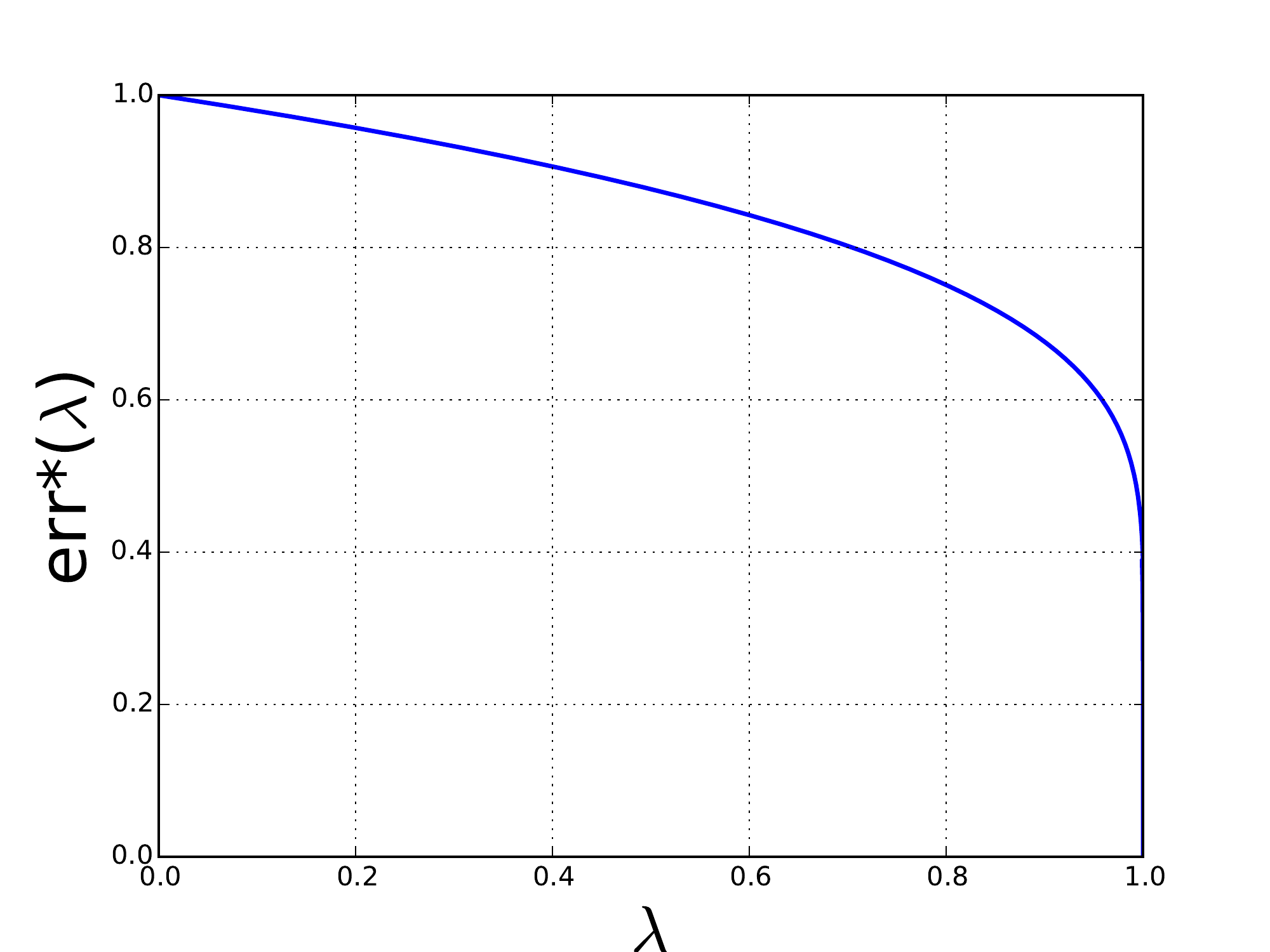}
\includegraphics[width=7.5cm]{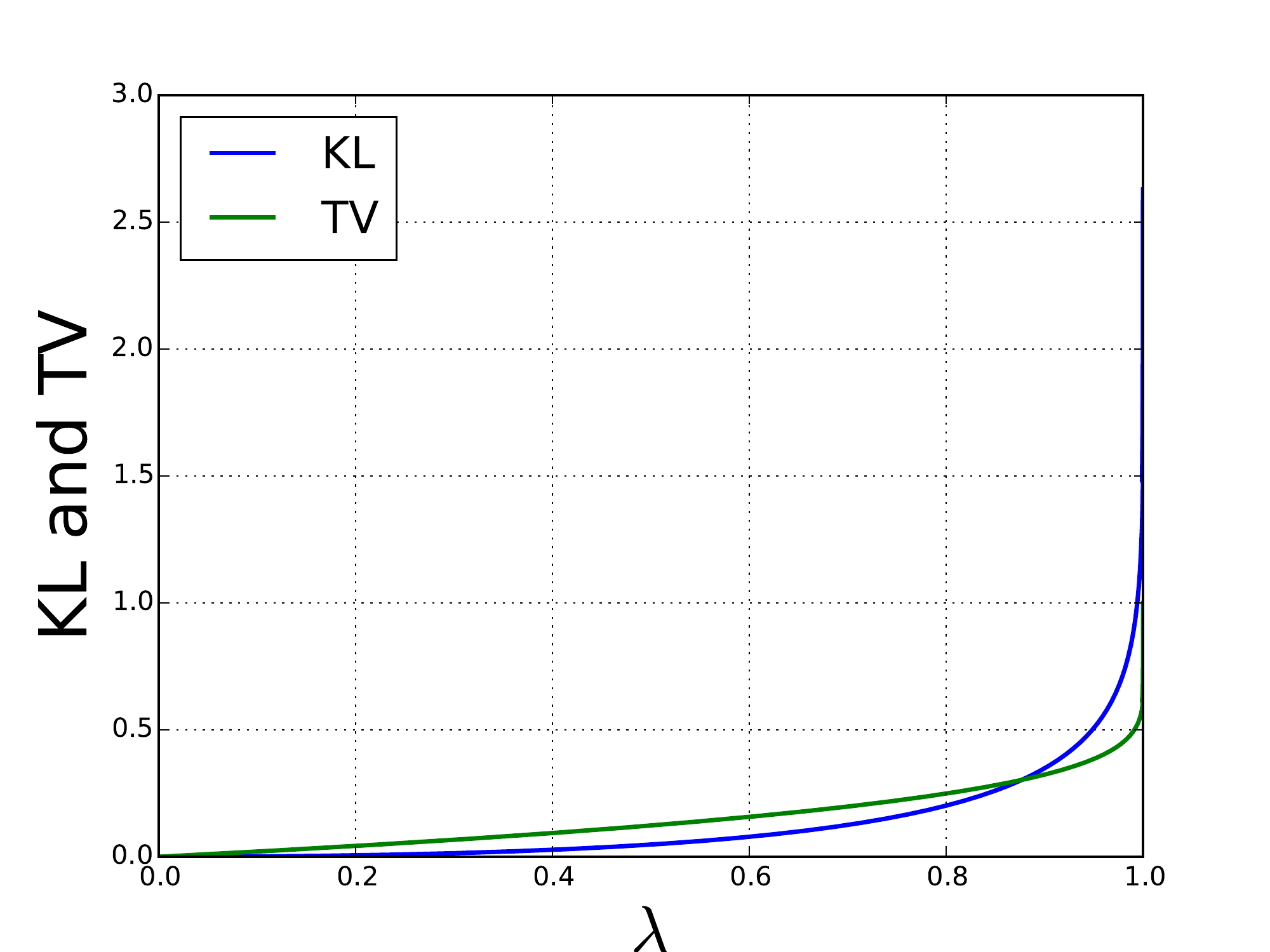}
\caption{Plots of formulas~\eqref{total_variation} and~\eqref{kul_divergence}.}
\protect\label{minimal_error_KL_TV}
\end{figure}

\section{Overlap convergence: optimal rates}
\label{sxn:convergence_overlaps}
A crucial component of proving our main results is understanding  the rate of convergence of the overlap $\vct{x}^\top\vct{x}^*/N$, where $\vct{x}$ is drawn from $\P_{\lambda}(\cdot|\mtx{Y})$, to its limit $q^*(\lambda)$. By Bayes' rule, we see that
\begin{equation}\label{posterior}
\rmd\P_{\lambda} (\vct{x}| \mtx{Y}) = \frac{ e^{- H(\vct{x})} \rmd P_{\x}^{\otimes N}(\vct{x})}{\int  e^{- H(\vct{x})}  \rmd P_{\x}^{\otimes N}(\vct{x})},
\end{equation} 
where $H$ is the Hamiltonian
\begin{align}\label{hamiltonian}
-H(\vct{x}) &:= -\frac{\lambda}{2N}\sum_{i < j} x_i^2x_j^2 + \sqrt{\frac{\lambda}{N}}\sum_{i < j} Y_{ij}x_ix_j \\
&= -\frac{\lambda}{2N}\sum_{i < j} x_i^2x_j^2 + \sqrt{\frac{\lambda}{N}}\sum_{i < j} W_{ij}x_ix_j + \frac{\lambda}{N}\sum_{i < j}x_ix_i^*x_jx_j^*.\nonumber
\end{align} 
From the formulas~\eqref{likelihood_ratio_explicit} and~\eqref{free_energy}, it is straightforward to see that
\[F_{N} = \frac{1}{N} \E \log~ \int e^{- H(\vct{x})} \rmd P_{\x}^{\otimes N}(\vct{x}),\]
This provides another way of interpreting $F_N$ as the expected log-partition function (or normalizing constant) of the posterior $\P_{\lambda}(\cdot | \mtx{Y})$.  
For an integer $n\ge 1$ and $f : (\R^{N})^{n+1} \mapsto \R$, we define the Gibbs average of $f$ w.r.t.\ $H$ as 
 \begin{equation}\label{gibbs_average_one}
 \left\langle f(\vct{x}^{(1)},\cdots,\vct{x}^{(n)},\vct{x}^*)\right\rangle:= \frac{\int f(\vct{x}^{(1)},\cdots,\vct{x}^{(n)},\vct{x}^*) \prod_{l=1}^n e^{- H(\vct{x}^{(l)})} \rmd P_{\x}^{\otimes N}(\vct{x}^{(l)})}{\int \prod_{l=1}^n e^{- H(\vct{x}^{(l)})}  \rmd P_{\x}^{\otimes N}(\vct{x}^{(l)})}.
 \end{equation}
This is nothing else that the average of $f$ with respect to $\P_{\lambda}(\cdot | \mtx{Y})^{\otimes n}$.
 The variables $\vct{x}^{(l)}, l=1\cdots,n$ are called \emph{replicas}, and are interpreted as random variables independently drawn from the posterior. When $n=1$ we simply write $f(\vct{x},\vct{x}^*)$ instead of $f(\vct{x}^{(1)},\vct{x}^*)$.  Throughout the rest of the manuscript, we use the following notation: for $l,l'=1,\cdots,n,*$, we let
 \[R_{l,l'} := \vct{x}^{(l)} \cdot \vct{x}^{(l')} = \frac{1}{N} \sum_{i=1}^N x_i^{(l)}x_i^{(l')}.\]
In this section we show the convergence of the first 4 moments of the overlap at optimal rates under some conditions: if either the prior $P_{\x}$ is not symmetric about the origin or the Hamiltonian $H$ is ``perturbed" in the following sense. Let $t \in [0,1]$ and consider the`` interpolating" Hamiltonian (this qualification will become clear in the next section)
\begin{align}\label{interpolating_hamiltonian}
-H_t(\vct{x}) &:=  -\frac{t\lambda}{2N}\sum_{i < j} x_i^2x_j^2 + \sqrt{\frac{t\lambda}{N}}\sum_{i < j} W_{ij}x_ix_j + \frac{t\lambda}{N}\sum_{i < j}x_ix_i^*x_jx_j^* \\
&~~~-\frac{(1-t)r}{2}\sum_{i=1}^N x_i^2 +  \sqrt{(1-t)r}\sum_{i=1}^N z_{i}x_i +  (1-t)r\sum_{i=1}^N x_ix_i^*,\nonumber
\end{align}
where the $z_i$'s are i.i.d.\ standard Gaussian r.v.'s independent of everything else, and $r = \lambda q^*(\lambda)$. We similarly define the Gibbs average $\langle \cdot \rangle_t$ as in~\eqref{gibbs_average_one} where $H$ is replaced by $H_t$. We now state a fundamental property satisfied by both $\langle \cdot \rangle$ and $\langle \cdot \rangle_t$.

\paragraph{The Nishimori property.} The fact that the Gibbs measure $\langle \cdot \rangle$ is a posterior distribution~\eqref{posterior} has far-reaching consequences. A crucial implication is that the $n+1$-tuples $(\vct{x}^{(1)},\cdots,\vct{x}^{(n+1)})$ and $(\vct{x}^{(1)},\cdots,\vct{x}^{(n)},\vct{x}^*)$ have the same law under $\E\langle \cdot \rangle$. This fact, which is a simple consequence of Bayes' rule~\citep[see Proposition 16,][]{lelarge2016fundamental} prevents replica-symmetry from breaking~\citep[see][]{korada2009exact}. In particular, $R_{1,2}$ and $R_{1,*}$ have the same distribution. This bares the name of \emph{the Nishimori property} in the spin glass literature~\citep{nishimori2001statistical}.  
Moreover, this property is preserved under the interpolating Gibbs measure $\langle \cdot \rangle_t$ for all $t \in [0,1]$. Indeed, the interpolation is constructed in such a way that $\langle \cdot \rangle_t$ is the posterior distribution of the signal $\vct{x}^*$ given the augmented set of observations
\begin{align}\label{augmented_observations}
\begin{cases}
Y_{ij} &= \sqrt{\frac{t\lambda}{N}} x^*_ix^*_j + W_{ij}, \quad 1 \le i < j \le N, \\
y_i &= \sqrt{(1-t)r}x^*_i + z_i, \quad 1 \le i \le N, 
\end{cases}
\end{align}  
where one receives side information about $\vct{x}^*$ from a scalar Gaussian channel, $r= \lambda q^*(\lambda)$, and the signal-to-noise ratios of the two channels are altered in a time dependent way. 
Now we state our concentration result.
\begin{theorem}\label{theorem_fourth_moment}
For all $\lambda \in \A$ and all $t \in [0,1]$, there exist constants $K(\lambda) \ge 0$ and $c(t) \ge 0$ such that
\begin{equation}\label{convergence_fourth_moment}
\E\left\langle \left(R_{1,*} - q^*\right)^{4} \right\rangle_t \le K(\lambda)\Big(\frac{1}{N^2} + e^{-c(t)N}\Big).
\end{equation}
Moreover, $c(t) > 0$ on $[0,1)$, and if either $\lambda<\lambda_c$ or $P_{\x}$ is  not symmetric about the origin, then $c(t) \ge c_0$ for some constant $c_0 = c_0(\lambda)>0$. Otherwise, $c(t) \sim c_0(1-t)^2$ as $t \to 1$.
\end{theorem}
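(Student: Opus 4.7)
I would prove the theorem by the classical split
\[
\E\big\langle (R_{1,*}-q^*)^4\big\rangle_t \;\le\; 8\,\E\big\langle (R_{1,*}-\bar q_t)^4\big\rangle_t \;+\; 8(\bar q_t -q^*)^4, \qquad \bar q_t := \E\langle R_{1,*}\rangle_t,
\]
with the first term yielding the $1/N^2$ rate and the second the exponential $e^{-c(t)N}$. The whole argument rests on the Nishimori identities for $\langle\cdot\rangle_t$ (which hold because $\langle\cdot\rangle_t$ is the posterior for the augmented observations~\eqref{augmented_observations}); these let me freely swap $R_{1,*}$ with $R_{1,2}$, and thereby convert asymmetric quantities into symmetric overlap moments that are naturally accessed by Gaussian integration by parts.

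\textbf{Fluctuations.} The thermal fluctuation $\E\langle(R_{1,*}-\langle R_{1,*}\rangle_t)^2\rangle_t$ is bounded, after Gaussian integration by parts in the scalar-channel noise $z_i$, by the second derivative of $\tfrac{1}{N}\E\log Z_{N,t}$ in the parameter $r$, which is $\bigo(1/N)$ by convexity and boundedness. The disorder fluctuation $\E\langle R_{1,*}\rangle_t - \bar q_t$ is controlled by Gaussian Poincaré applied to $\tfrac{1}{N}\log Z_{N,t}$ seen as a function of $(W_{ij},z_i)$; the bounded support of $P_{\x}$ makes the gradient $\bigo(1/\sqrt{N})$ in $\ell_2$, giving another $\bigo(1/N)$. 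To upgrade these $L^2$ bounds to the $L^4$ bound $\bigo(1/N^2)$, I would use the Panchenko--Talagrand perturbation trick: average the SNR parameter $r$ over a short window (or add a small independent Gaussian side-channel), use convexity to force a second-derivative lower bound, then combine with sub-Gaussian concentration of $\tfrac{1}{N}\log Z_{N,t}$ (which is Lipschitz with constant $\bigo(1/\sqrt{N})$ in the disorder) to promote variance to fourth moment.

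\textbf{Bias.} Differentiating $\tfrac{1}{N}\E\log Z_{N,t}$ in $r$ expresses $\bar q_t$ in terms of a derivative of a convex function whose pointwise limit is the differentiable function $\phi_{\RS}$ (since $\lambda\in\A$); Griffiths' lemma then gives $\bar q_t \to q^*$. To promote this to an exponential rate, I would prove a Laplace/large-deviation bound
\[
\E\big\langle \indi\{|R_{1,*}-q^*|>\delta\}\big\rangle_t \;\le\; e^{-N\Delta(\delta,t)},
\]
with $\Delta(\delta,t)>0$ given by the gap in the $t$-interpolated RS potential between its value at $q^*$ and its supremum on $\{|q-q^*|>\delta\}$. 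When $\lambda<\lambda_c$ (so $q^*=0$ and the maximizer is unique by Lemma~\ref{spectral_bound}) or when $P_{\x}$ is non-symmetric (so $q^*$ is isolated on the whole of $[0,1]$), strict concavity at $q^*$ gives $\Delta(\delta,t)\gtrsim\delta^2$ uniformly in $t\in[0,1]$, hence $c(t)\ge c_0(\lambda)>0$. In the remaining symmetric case with $\lambda>\lambda_c$, the potential has twin maxima at $\pm q^*$, and the only symmetry-breaking contribution in $H_t$ is the scalar side-channel $(1-t)r\sum_i x_i x_i^*$; a comparison of the contributions of the two wells to $Z_{N,t}$ shows that their free energies differ by $\bigo(1-t)$, and since this gap enters the tail bound quadratically (through Gaussian suppression of the wrong well), one obtains $c(t)\sim c_0(1-t)^2$ as $t\to 1$.

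\textbf{Main obstacle.} The delicate step is the large-deviation bound with the correct $(1-t)^2$ scaling in the symmetric, super-critical case. It requires comparing the partition functions restricted to neighborhoods of $\pm q^*$ with enough precision to see that the side-channel perturbation produces a linear-in-$(1-t)$ free-energy gap, and then showing that this gap enters the exponential rate quadratically rather than linearly. A second, more technical obstacle is the $L^2\to L^4$ upgrade: the natural Poincaré/Efron--Stein bounds give only variance control, and producing the clean $1/N^2$ bound without logarithmic losses requires combining sub-Gaussian disorder concentration with the $r$-perturbation averaging in a single step, which is where the bounded-support assumption on $P_{\x}$ is crucially used.
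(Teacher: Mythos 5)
Your large-deviation half (a Franz--Parisi-type gap in the $t$-interpolated potential, sub-Gaussian concentration of $\frac{1}{N}\log Z$, and a gap of order $(1-t)$ entering the tail bound quadratically) is essentially the paper's route to the $e^{-c(t)N}$ term, modulo one omission: in the asymmetric case with $\lambda>\lambda_c$ the dangerous overlap values are $m\approx -q^*$, and excluding them is not a consequence of $q^*$ being an isolated maximizer of $F(\lambda,\cdot)$ on $q\ge 0$; it requires the quantitative asymmetry estimate $\widebar{\psi}(r,-r)\le\widebar{\psi}(r,r)-c$ with $c>0$ for non-symmetric priors (Lemma~\ref{asymmetry_lower_bound}), which your sketch does not supply. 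Also, your claim that the bias term $(\bar q_t-q^*)^4$ is the source of the exponential contribution is off: the bias is generically of polynomial order $1/N$ (the cavity computations show $\nu(R_{1,*})-q^*$ has $1/N$ corrections), and the exponential contribution from atypical overlaps sits inside \emph{both} terms of your decomposition.

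The genuine gap is in the $1/N^2$ part. The Poincar\'e/convexity toolbox you invoke does not produce the optimal rate. Convexity of $r\mapsto\frac{1}{N}\E\log Z$ controls only the \emph{integral} over $r$ of the thermal variance (because the total variation of the first derivative is bounded), so at the single fixed value $r=\lambda q^*(\lambda)$ you cannot conclude $\bigo(1/N)$ without further regularity; the Panchenko--Talagrand window-averaging yields bounds for almost every $r$ in a shrinking window, with rates typically of order $N^{-1/3}$ or worse, not a pointwise $1/N$. Gaussian Poincar\'e applied to $\frac{1}{N}\log Z$ bounds the variance of the free energy, not of $\langle R_{1,*}\rangle_t$, and the proposed ``$L^2\to L^4$ upgrade'' via sub-Gaussian concentration of $\frac{1}{N}\log Z$ (which only says $\log Z$ fluctuates at scale $\sqrt N$) cannot reach $1/N^2$. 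The paper instead runs a leave-one-out cavity computation: isolating $x_N$, integrating by parts in the Gaussian disorder, and using the Nishimori property produce a closed $3\times 3$ linear system in $A=\nu((R_{1,*}-q^*)^2)$, $B=\nu((R_{1,*}-q^*)(R_{2,*}-q^*))$, $C=\nu((R_{1,*}-q^*)(R_{2,3}-q^*))$, whose solvability hinges on the spectral radius $\mu_1$ of the cavity matrix being strictly below $1$ --- and this is precisely where the hypothesis $\lambda\in\A$ (twice differentiability of $\phi_{\RS}$) is consumed, via Lemma~\ref{latala_guerra}; the fourth moment then follows by rerunning the same system with the second-moment bound in hand. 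Your proposal never engages with this spectral condition or with where $\lambda\in\A$ is actually used, which is the signal that the mechanism delivering the $1/N^2$ rate is missing.
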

 If $P_{\x}$ is symmetric about the origin then the distribution of $R_{1,*}$ under $\E\langle \cdot \rangle$ is also symmetric, so $\E\langle R_{1,*}\rangle = 0$. If moreover $q^*(\lambda)>0$ (i.e., $\lambda > \lambda_c$) then~\eqref{convergence_fourth_moment} becomes trivial at $t=1$ since both sides are constant. On the other hand, if either $t<1$ or $P_{\x}$ is asymmetric, the sign symmetry of the spike is broken. This forces the overlap to be positive and hence concentrate about $q^*(\lambda)$. Finally, if $\lambda < \lambda_c$, $q^*(\lambda) = 0$ and the sign symmetry becomes irrelevant since the overlap converges to zero regardless. 
Let us mention that in the symmetric unperturbed case ($t=1$), we expect a variant of~\eqref{convergence_fourth_moment} to hold where $R_{1,*}$ is replaced by its absolute value in the statement, and the upper bound would be $K/N^2$. Unfortunately, our methods do not allow us to prove such a statement, but we are able to prove a weaker result (see Lemma~\ref{convergence_in_probability}): for all $\epsilon \ge 0$,
\begin{equation}\label{convergence_in_probability_absolute_value}
\E \big\langle \indi\left\{\big| |R_{1,*}| - q^* \big| \ge \epsilon \right\}\big\rangle \longrightarrow 0.
\end{equation}
Although this a minor technical point, we also point out that the estimate $c(t) \sim c_0 (1-t)^2$ in the statement is suboptimal. A heuristic argument allows us to get $c(t) \sim c_0 (1-t)$ as $t \to 1$, but we are currently unable to rigorously justify it. 
 

\paragraph{MMSE.} The bound~\eqref{convergence_fourth_moment} can be used to deduce the optimal error of estimating $\vct{x}^*$ based on the observations~\eqref{augmented_observations}. The posterior mean $\langle \vct{x}\rangle_t$ is the estimator with Minimal Mean Squared Error (MMSE) among all estimators $\widehat{\theta}(\mtx{Y},\vct{y}) \in \R^N$, and the MMSE is 
\begin{align*}
\frac{1}{N} \sum_{i=1}^N \E\left[(x^*_i - \langle x_i \rangle_t)^2\right] &= \E_{P_{\x}}[X^2] - \frac{2}{N} \sum_{i=1}^N\E \langle x_ix_i^*\rangle_t +  \frac{1}{N} \sum_{i=1}^N \E \langle x_i \rangle_t^2\\
&= \E_{P_{\x}}[X^2] - \E \langle R_{1,*} \rangle_t.
\end{align*}
The last line follows from the Nishimori property, since $\E \langle x \rangle_t^2 = \E \langle x^{(1)}x^{(2)} \rangle_t =  \E \langle xx^* \rangle_t$. Theorem~\ref{theorem_fourth_moment} implies in particular (under the conditions of its validity) that $\E \langle R_{1,*} \rangle_t \to q^*(\lambda)$, yielding the value of the MMSE. It is in particular possible to estimate the spike $\vct{x}^*$ from the observations~\eqref{augmented_observations} with non-trivial accuracy if and only if $\lambda > \lambda_c$. Note that at $t=1$ (no side information) the result still holds below $\lambda_c$ or when the prior is not symmetric. Otherwise, as mentioned before, the problem is invariant under a sign flip of $\vct{x}^*$ so one has to change the measure of performance. Beside the result~\eqref{convergence_in_probability_absolute_value}, we are unable to say much in this situation.    

\paragraph{Asymptotic variance.} By Jensen's inequality we deduce from~\eqref{convergence_fourth_moment} the convergence of the second moment: 
\begin{equation}\label{convergence_second_moment}
\E\left\langle \left(R_{1,*} - q^*\right)^{2} \right\rangle_t \le K(\lambda)\Big(\frac{1}{N} + e^{-c(t)N}\Big).
\end{equation}
To establish our finite-size correction result (Theorem~\ref{finite_size_correction}) we need to prove a result stronger than~\eqref{convergence_second_moment}, namely that $N \cdot \E\left\langle \left(R_{1,*} - q^*\right)^{2} \right\rangle_t$ converges to a limit. For $t \in [0,1]$ and $\lambda \in \A$, we let 
\begin{equation}\label{Delta_RS}
\Delta_{\RS}(\lambda;t) := \frac{1}{\lambda} \left(-\frac{\mu_1}{1-t\mu_1} + \frac{2\mu_2}{1-t\mu_2} + \lambda\frac{4a(1) - 3a(2)}{(1-t\mu_1)^2}\right),
\end{equation}
where $\mu_1$ and $\mu_2$ are defined in~\eqref{eigenvalues}.
\begin{theorem}\label{asymptotic_variance}  
For all $\lambda \in \A$ and all $t \in [0,1]$, there exist constants $K(\lambda) \ge 0$ and $c(t) \ge 0$ such that
\[ 
\abs{ N\cdot \E\left\langle \left(R_{1,*}-q^*\right)^2\right\rangle_t - \Delta_{\RS}(\lambda;t)} \le K(\lambda)\left(\frac{1}{\sqrt{N}}+ Ne^{-c(t)N}\right).
\]
Moreover, $c(t) > 0$ on $[0,1)$, and if either $\lambda<\lambda_c$ or $P_{\x}$ is  not symmetric about the origin, then $c(t) \ge c_0$ for some constant $c_0 = c_0(\lambda)>0$. Otherwise, $c(t) \sim c_0(1-t)^2$ as $t \to 1$.
\end{theorem}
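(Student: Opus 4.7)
The plan is to prove Theorem~\ref{asymptotic_variance} by Talagrand's cavity method, run along the Guerra interpolation path $t\in[0,1]$ defined by $H_t$. The target expression $\Delta_{\RS}(\lambda;t)$ is the first coordinate of the solution of the $3\times 3$ linear system $(I - t\mtx{A})\vct{z} = \vct{a}$, with $\vct{a} = [a(0), a(1), a(2)]^{\top}$ and $\mtx{A}$ the cavity matrix displayed after Theorem~\ref{finite_size_correction}. The overall strategy is to produce this exact fixed-point equation as a self-consistent identity satisfied, up to lower-order error, by a three-dimensional vector of rescaled overlap covariances at interpolation time $t$.

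First, set $\bar{R}_{l,l'} := R_{l,l'} - q^*(\lambda)$ and define
\[
\vct{U}(t) := N \cdot \bigl[\E\langle \bar{R}_{1,2}^2\rangle_t,\; \E\langle \bar{R}_{1,2}\bar{R}_{1,3}\rangle_t,\; \E\langle \bar{R}_{1,2}\bar{R}_{3,4}\rangle_t\bigr]^{\top},
\]
where distinct numerical indices denote independent replicas under $\langle\cdot\rangle_t$. By the Nishimori identity, which Section~\ref{sxn:convergence_overlaps} shows is preserved by the interpolation, the planted spike is interchangeable with a fresh replica, so the first entry $U_0(t)$ coincides with $N\cdot\E\langle \bar{R}_{1,*}^2\rangle_t$, the quantity the theorem controls. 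At $t=0$ the interpolating Hamiltonian is a sum of decoupled single-coordinate terms, so the Gibbs measure factorizes into a product of scalar posteriors $\langle \cdot \rangle_r$; splitting the double sum defining $R_{l,l'}R_{l'',l'''}$ into diagonal and off-diagonal contributions and invoking the scalar-channel Nishimori identity $\E\langle x\rangle_r^2 = q^*(\lambda)$, a one-spin computation yields $\vct{U}(0) = \vct{a}$ directly from the definitions in~\eqref{4_replica_overlaps}.

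The core step is to differentiate $\vct{U}(t)$ in $t$ and apply Gaussian integration by parts to both the matrix noise $W_{ij}$ and the scalar auxiliary noise $z_i$ appearing in $H_t$. Each derivative produces a sum of overlap correlators between up to four replicas; expanding each overlap as $\bar{R} + q^*$ and reducing via Nishimori identities, the leading contributions organize into a linear combination of the three components of $\vct{U}(t)$. After bookkeeping, the identity rearranges into the integral form
\[
\vct{U}(t) = \vct{a} + t\,\mtx{A}\,\vct{U}(t) + \vct{\epsilon}(t,N),
\]
with the cavity matrix $\mtx{A}$ of Section~\ref{sxn:main_results} appearing because precisely the combinations $a(0), a(1), a(2)$ arise when grouping the possible pairings of four replicas. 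Since by Lemma~\ref{latala_guerra} we have $\mu_1 < 1$ on $\A$, the spectral radius of $t\mtx{A}$ stays strictly below $1$ on $[0,1]$, so $(I - t\mtx{A})$ is invertible; reading off the first coordinate of $(I - t\mtx{A})^{-1}\vct{a}$ then reproduces $\Delta_{\RS}(\lambda;t)$ as defined in~\eqref{Delta_RS}.

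The main obstacle I anticipate is controlling $\vct{\epsilon}(t,N)$ at the declared scale. The error is composed of third- and fourth-order centered overlap moments, which I would bound using Theorem~\ref{theorem_fourth_moment} together with Cauchy-Schwarz; the asymmetric-prior or subcritical assumption propagates through the rate $c(t)$ and produces the $Ne^{-c(t)N}$ tail of the claim. The genuine subtlety is the $1/\sqrt{N}$ rate: one must identify which terms contribute at order $1/\sqrt{N}$ rather than $1/N$, track the cancellations implied by Nishimori, and verify that the combinatorial coefficients of the leading contributions assemble into precisely the matrix $\mtx{A}$ and not a close cousin. A secondary subtlety is the symmetric supercritical case, where $c(t)$ degenerates as $t \to 1$ and uniformity is lost---this exactly mirrors the restriction appearing in Theorem~\ref{finite_size_correction}.
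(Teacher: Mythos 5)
Your target structure is right: the theorem does follow from a $3\times 3$ linear fixed-point system $\vct{z} = \tfrac{1}{N}\vct{a} + t\mtx{A}\vct{z} + \vct{\delta}$ in the three rescaled overlap covariances, with $\vct{U}(0)=\vct{a}$, invertibility of $\mtx{I}-t\mtx{A}$ supplied by Lemma~\ref{latala_guerra}, and the error terms controlled by Theorem~\ref{theorem_fourth_moment} (via $\nu(|R_{1,*}-q^*|^3)\le K N^{-3/2}$, which after multiplication by $N$ is exactly where the $1/\sqrt{N}$ rate comes from). But the mechanism you propose for \emph{deriving} that system does not work. You differentiate $\vct{U}(t)$ along the Guerra path in $t$ and claim the derivative reorganizes into a linear combination of the entries of $\vct{U}(t)$. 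Gaussian integration by parts in $t$ applied to $N\E\langle \bar{R}_{1,2}^2\rangle_t$ produces terms of the schematic form $\lambda N^2\,\E\langle \bar{R}_{1,2}^2\,\bar{R}_{a,b}^2\rangle_t$: these are $O(1)$ but they are genuine \emph{fourth}-moment correlators, not linear functions of the second-moment vector $\vct{U}(t)$. Reducing them to $\mtx{A}\vct{U}(t)$ would require an asymptotic Wick-type factorization of fourth moments, and Theorem~\ref{theorem_fourth_moment} only gives one-sided upper bounds of the correct order, not such a factorization. Moreover the algebra does not match the claimed integral form: if $U'(s)$ were (approximately) $\mtx{A}\vct{U}(s)$ you would get an exponential, not $\vct{a}+t\mtx{A}\vct{U}(t)$.

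The paper obtains the system by an entirely different interpolation: $t$ is held \emph{fixed}, and a second, leave-one-out parameter $s\in[0,1]$ is introduced which decouples the last spin $x_N$ from the rest ($H_{t,s}=H_t^- + h_{ts}$). One first writes $A=\nu\left((R_{1,*}-q)(x_Nx_N^*-q)\right)$ by symmetry among sites, then Taylor-expands $\nu_s$ to first order at $s=0$ (Lemmas~\ref{gibbs_derivative}--\ref{first_second_order_estimates}). At $s=0$ the cavity spin is governed by the decoupled scalar measure $\langle\cdot\rangle_r$, so its moments factor out as the coefficients $a(0),a(1),a(2)$, and the single derivative in $s$ brings down exactly one power of the coupling strength $t\lambda$ times a \emph{second}-moment correlator of the remaining $N-1$ spins --- this is what makes the system linear in $(A,B,C)$ and puts the factor $t$ in front of $\mtx{A}$. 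The second-order Taylor remainder is where the cubic overlap moments enter. If you want to salvage your write-up, replace the $t$-differentiation step by this cavity expansion in $s$; the rest of your outline (initial condition, invertibility, error bookkeeping, and the degeneration of $c(t)$ in the symmetric supercritical case) then goes through as you describe.
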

The proofs  of Theorems~\ref{theorem_fourth_moment} and~\ref{asymptotic_variance} rely on the cavity method, and will be presented in Section~\ref{sxn:cavity_method}. 
Finally, the techniques we use could be easily extended to prove convergence of all the moments at optimal rates: for all integers $k$,
\[\E\left\langle \left(R_{1,*} - q^*\right)^{2k} \right\rangle_t \le \frac{K(k)}{N^{k}} + K(k)e^{-c(k,t)N},\] 
but we will not need this stronger statement.

\section{The interpolation method}
\label{sxn:interpolation_method}
In this section we present the interpolation method of~\cite{guerra2001sum}. All our main arguments will rely, in one way or another, on this method. Along the way, we prove Theorem~\ref{finite_size_correction}. The idea is to construct a continuous interpolation path between the Hamiltonian $H$ and a simpler Hamiltonian that decouples all the variables, and analyze the incremental change in the free energy along the path. We present two versions of this method. The first one is the classical method which is applied to the free energy of the entire system, and a second one applied to the free energy of a more restricted system.  

\subsection{The Guerra interpolation}
Our interpolating Hamiltonian is $H_t$ from~\eqref{interpolating_hamiltonian} with $r=\lambda q$ for some $q \ge 0$.
Now we consider the interpolating free energy
\begin{equation}\label{interpolating_free_energy}
\varphi(t) :=  \frac{1}{N} \E \log \int e^{- H_t(\vct{x})} \rmd P_{\x}^{\otimes N}(\vct{x}).
\end{equation}
We see that $\varphi(1) = F_{N}$ and $\varphi(0) = \psi(\lambda q)$. This function is moreover differentiable in $t$, and 
by differentiation, we have
\begin{align*}
\varphi'(t) &= \frac{1}{N} \E \left \langle -\frac{\rmd H_t(\vct{x})}{\rmd t}\right\rangle_t\\
&= \frac{1}{N} \E \left\langle -\frac{\lambda}{2N}\sum_{i < j} x_i^2x_j^2 + \half \sqrt{\frac{\lambda}{t N}}\sum_{i < j} W_{ij}x_ix_j + \frac{\lambda}{N}\sum_{i < j}x_ix_i^*x_jx_j^* \right\rangle_t \\
&~~~+\frac{1}{N}\E \left\langle \frac{\lambda q}{2}\sum_{i=1}^N x_i^2 -  \half\sqrt{\frac{\lambda q}{1-t}}\sum_{i=1}^N z_{i}x_i - \lambda q\sum_{i=1}^N x_ix_i^*\right\rangle_t.
\end{align*}
Now we use Gaussian integration by parts to eliminate the variables $W_{ij}$ and $z_i$. The details of this computation are explained extensively in many sources. See~\citep{talagrand2011mean1,krzakala2016mutual,lelarge2016fundamental}. We get
\begin{align*}
\varphi'(t) &= -\frac{\lambda}{2N^2} \E\left\langle \sum_{i < j} x_i^{(1)} x_j^{(1)}x_i^{(2)} x_j^{(2)}\right\rangle_t + \frac{\lambda}{N^2}  \E\left\langle  \sum_{i < j}  x_i x_i^{*} x_j x_j^{*} \right\rangle_t \\
&~~~+\frac{\lambda q}{2N} \E \left\langle \sum_{i=1}^N x_i^{(1)} x_i^{(2)} \right\rangle_t - \frac{\lambda q}{N} \E \left\langle \sum_{i=1}^N x_ix_i^*\right\rangle_t.
\end{align*}
Completing the squares yields
\begin{align}\label{derivative_before_nishimori}
\varphi'(t) &= -\frac{\lambda}{4} \E\left\langle (\vct{x}^{(1)}\cdot\vct{x}^{(2)}-q)^2\right\rangle_t + \frac{\lambda}{4} q^2 + \frac{\lambda}{4N^2} \sum_{i=1}^N \E\left\langle {x_i^{(1)}}^2{x_i^{(2)}}^2\right\rangle_t \\
&~~~+\frac{\lambda}{2} \E\left\langle (\vct{x}\cdot\vct{x}^*-q)^2\right\rangle_t - \frac{\lambda}{2} q^2 - \frac{\lambda}{2N^2} \sum_{i=1}^N \E\left\langle {x_i}^2{x_i^{*}}^2\right\rangle_t.\nonumber
\end{align}
The first line in the above expression involves overlaps between two independent replicas, while the second one involves overlaps between one replica and the planted solution. Using the Nishimori property, the derivative of $\varphi$ can be written as
\begin{equation}\label{free_energy_derivative}
\varphi'(t) = \frac{\lambda}{4} \E\left\langle (R_{1,*}-q)^2 \right\rangle_t -\frac{\lambda}{4} q^2 - \frac{\lambda}{4N} \E\left\langle {x_N}^2{x_N^{*}}^2\right\rangle_t.
\end{equation}  
The last term follows by symmetry between sites. Now, integrating over $t$, the difference between the free energy and the $\RS$ potential $F(\lambda,q)$ can be written in the form of a sum rule:  
\begin{equation}\label{sum_rule}
F_N - F(\lambda,q) = \frac{\lambda}{4} \int_{0}^1 \Big(\E\left\langle (R_{1,*}-q)^2\right\rangle_t - \frac{1}{N}\E\left\langle {x_N}^2 {x_N^*}^2\right\rangle_t\Big) \rmd t.
\end{equation}
 
We see from~\eqref{sum_rule} that $F_N$ converges to $F(\lambda,q)$ if and only if the overlap $R_{1,*}$ concentrates about $q$. This happens only for a value of $q$ that maximizes the $\RS$ potential $F(\lambda, \cdot)$. Using Theorem~\ref{theorem_fourth_moment} one can already prove the $1/N$ optimal rate below $\lambda_c$ or above it when the prior is not symmetric. Indeed since $c(t)$ is lower-bounded by a positive constant in this case, the bound~\eqref{convergence_second_moment} yields $\int_0^1\E \langle (R_{1,*} - q^*)^2 \rangle_t \rmd t \le K(\lambda)/N$. Also, the second integrand in~\eqref{sum_rule} is bounded by $K/N$ for some constant $K \ge 0$, so we have for all $\lambda \in \A$, $F_N = \phi_{\RS}(\lambda) + \bigo(1/N)$. If $\lambda > \lambda_c$ and the prior is symmetric then we are only able to prove a rate of $1/\sqrt{N}$ due to the fact $c(t) \sim c_0(1-t)^2$ as $t \to 1$. The $1/N$ rate would follow immediately in this case if one is able to improve the latter estimate to $c(t) \sim c_0(1-t)$.
To go further, we use Theorem~\ref{asymptotic_variance}, and the additional fact that $\E\langle {x_N}^2 {x_N^*}^2\rangle_t$ has a limit:
\begin{lemma}\label{diagonal_term}  
For all $\lambda \in \A$ and for all $t \in [0,1)$, there exist constants $K(\lambda) \ge 0$ and $c(t) \ge 0$ such that
\[\abs{\E\left\langle {x_N}^2 {x_N^*}^2\right\rangle_t - a(0)} \le K(\lambda)\left(\frac{1}{\sqrt{N}}+ e^{-c(t)N}\right).\]
Moreover, $c(t) > 0$ on $[0,1)$, and if either $\lambda<\lambda_c$ or $P_{\x}$ is  not symmetric about the origin, then $c(t) \ge c_0$ for some constant $c_0 = c_0(\lambda)>0$. Otherwise, $c(t) \sim c_0(1-t)^2$ as $t \to 1$.
\end{lemma}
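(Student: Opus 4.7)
The proof uses the cavity method at the last coordinate, paralleling the strategy behind Theorems~\ref{theorem_fourth_moment} and~\ref{asymptotic_variance}. First I would isolate site $N$ by splitting the interpolating Hamiltonian as
\[
-H_t(\vct{x}) \;=\; -H_t^-(\vct{x}_{-N}) + x_N\, B_N + x_N^2\, C_N,
\]
where $\vct{x}_{-N}=(x_1,\dots,x_{N-1})$, $H_t^-$ involves only the cavity spins, and the cavity fields are
\[
B_N = \sqrt{\tfrac{t\lambda}{N}}\sum_{i<N}W_{iN}x_i + \tfrac{t\lambda}{N}x_N^*\sum_{i<N}x_i x_i^* + \sqrt{(1-t)r}\,z_N + (1-t)r\,x_N^*, \qquad C_N = -\tfrac{t\lambda}{2N}\sum_{i<N}x_i^2 - \tfrac{(1-t)r}{2}.
\]
Integrating out $x_N$ against the prior and defining $\phi_f(b,c)=\int f(x)e^{bx+cx^2}\rmd P_{\x}(x)$, one gets the closed form $\langle x_N^2\rangle_t=\langle\phi_{x^2}(B_N,C_N)\rangle_t^-/\langle\phi_1(B_N,C_N)\rangle_t^-$, where $\langle\cdot\rangle_t^-$ is the cavity Gibbs measure of size $N-1$. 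Multiplying by $(x_N^*)^2$ and taking expectation reduces the lemma to analyzing the asymptotics of this ratio.

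The next step is to show that $(B_N,C_N)$ converges to the cavity fields of the scalar Gaussian channel at SNR $r=\lambda q^*$. By the Nishimori property (preserved by $H_t^-$) and Theorem~\ref{theorem_fourth_moment} applied to the $(N-1)$-site cavity system, the ``planted'' part of $B_N$ equals $\big(\tfrac{t\lambda}{N}\sum_{i<N}\langle x_i x_i^*\rangle_t^-+(1-t)r\big)x_N^*+\sqrt{(1-t)r}z_N$ and concentrates to $r\,x_N^*+\sqrt{(1-t)r}z_N$ with $L^2$ error $\bigo(1/\sqrt{N})$ on the good event and $e^{-c(t)N}$ otherwise. The Wigner contribution $\sqrt{t\lambda/N}\sum_{i<N}W_{iN}x_i$ is handled by Gaussian integration by parts on the $W_{iN}$'s and the concentration of $\langle R_{1,2}\rangle_t^-$: its conditional variance equals $t\lambda q^*+\bigo(1/\sqrt N)$, giving a total effective variance of $r$ after adding the $\sqrt{(1-t)r}z_N$ piece. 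Analogously $C_N\to -r/2$. Matching these moments with those of a scalar Gaussian channel observation $y=\sqrt{r}\,x_N^*+\xi$ yields $\langle x_N^2\rangle_t\to\langle x^2\rangle_r$ pointwise under $\E$.

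Multiplying by $(x_N^*)^2$ and invoking Nishimori on the scalar channel (so that $\E[(x^*)^2\mid y]=\langle x^2\rangle_r$) gives the limit $\E[(x^*)^2\langle x^2\rangle_r]=\E[\langle x^2\rangle_r^2]$, which is the quantity $a(0)$ in~\eqref{4_replica_overlaps} (up to the conventional $q^{*2}$ offset, whose role is to make the central cavity matrix $A$ invariant under that shift). The rate $1/\sqrt{N}$ comes from a second-order Taylor expansion of the smooth ratio $\phi_{x^2}/\phi_1$ around the concentrated values of $(B_N,C_N)$, together with Cauchy-Schwarz: the $L^4$ bound $\E\langle(R_{1,*}-q^*)^4\rangle_t=\bigo(1/N^2)$ from Theorem~\ref{theorem_fourth_moment} controls the $L^2$ norms of the relevant overlap deviations by $\bigo(1/N)$, and passing to the expectation of a smooth functional via Cauchy-Schwarz loses a square root, producing the $\bigo(1/\sqrt N)$ rate. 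The exponential remainder $e^{-c(t)N}$ and the degradation $c(t)\sim c_0(1-t)^2$ near $t=1$ for symmetric priors above $\lambda_c$ are inherited directly from Theorem~\ref{theorem_fourth_moment}.

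The main obstacle will be controlling the Taylor remainder uniformly in the realization of the cavity fields. Because the prior has bounded support, $\phi_{x^2}/\phi_1$ is a bounded analytic function of $(b,c)$ with uniformly bounded derivatives on compact sets, so the smoothness constants are tame; the nontrivial work is therefore showing that $(B_N,C_N)$ stay in such a compact set with high probability under $\E\langle\cdot\rangle_t^-$, and that the second-order remainder indeed admits a bound proportional to the squared overlap deviation. This uses the $L^4$ control of Theorem~\ref{theorem_fourth_moment} in an essential way: the $L^2$ bound~\eqref{convergence_second_moment} alone would only give a $\bigo(N^{-1/4})$ rate after Cauchy-Schwarz, which would be insufficient for the finite-size correction claim of Theorem~\ref{finite_size_correction}.
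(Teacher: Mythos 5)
Your overall strategy is the same as the paper's: the lemma is proved by a cavity argument at site $N$, comparing $\E\langle x_N^2 x_N^{*2}\rangle_t$ with the value it takes when $x_N$ is decoupled and sees only the scalar Gaussian channel at SNR $r=\lambda q^*$, and bounding the difference by the deviation of the overlap $R^-_{1,*}$ from $q^*$. The paper packages this as the one-parameter interpolation $s\mapsto \nu_s$ of the cavity section: with $f=x_N^2x_N^{*2}$ one has $\nu_0(f)=a(0)$ by decoupling at $s=0$ together with the Nishimori property of the scalar channel, and the first-order estimate \eqref{first_order_estimate} with $\tau_1=1$, $\tau_2=\infty$ gives $\abs{\nu(f)-\nu_0(f)}\le K(\lambda)\,\nu(|R^-_{1,*}-q^*|)$, which is $\bigo(N^{-1/2}+e^{-c(t)N})$ by Jensen and the overlap concentration. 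Your explicit-cavity-field version is this computation unrolled.

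Two steps as written would not survive scrutiny. First, the componentwise limits you assign to the cavity fields are wrong: $C_N\to -\tfrac{t\lambda}{2}\E_{P_{\x}}[X^2]-\tfrac{(1-t)r}{2}$, not $-r/2$, and the Wigner part of $B_N$ has conditional variance $t\lambda R^-_{1,1}\to t\lambda \E_{P_{\x}}[X^2]$, not $t\lambda q^*$. Matching $(B_N,C_N)$ componentwise to the scalar-channel fields therefore fails unless $q^*=\E_{P_{\x}}[X^2]$. What is true is that the excess Gaussian variance $t\lambda(\E_{P_{\x}}[X^2]-q^*)$ is exactly cancelled by the corresponding excess in $C_N$ once the disorder is averaged (this is the role of the $-\tfrac{\lambda}{2N}x_i^2x_N^2$ term in the Hamiltonian), so only the law of $x\mapsto B_Nx+C_Nx^2$ matches the channel; the integration-by-parts computation of Lemma~\ref{gibbs_derivative} handles this cancellation automatically, and your write-up must do the same rather than match the moments of $B_N$ and $C_N$ separately. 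Second, your rate accounting is off: the fourth-moment bound of Theorem~\ref{theorem_fourth_moment} is not needed here. The error is controlled by the first absolute moment $\nu(|R^-_{1,*}-q^*|)$, and Jensen applied to the second-moment bound \eqref{convergence_second_moment} already yields $\bigo(N^{-1/2}+e^{-c(t)N/2})$; the claim that $L^2$ control alone would give only $N^{-1/4}$ is incorrect. (You are right to be uneasy about identifying $\nu_0(x_N^2x_N^{*2})$ with $a(0)=\E[\langle x^2\rangle_r^2]-q^{*2}$: Nishimori on the scalar channel gives $\E[\langle x^2\rangle_r x^{*2}]=\E[\langle x^2\rangle_r^2]$, so the $q^{*2}$ offset must be tracked consistently with the convention used in \eqref{sigma_RS}.)
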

The proof of Lemma~\ref{diagonal_term} relies on the cavity method, and will be presented in the Section~\ref{sxn:cavity_method}. Now we are ready to prove Theorem~\ref{finite_size_correction}.

\vspace{.2cm}

\begin{proofof}{Theorem~\ref{finite_size_correction}}
By formula~\eqref{sum_rule} with the choice $q = q^*(\lambda)$, we have
\begin{align*}
\abs{N(F_N - \phi_{\RS}(\lambda)) - \frac{\lambda}{4}\Big(\int_{0}^1 \Delta_{\RS}(\lambda;t)\rmd t - a(0)\Big)} 
&\le \frac{\lambda}{4} \int_0^1 \abs{ N \E\left\langle \left(R_{1,*}-q^*\right)^2\right\rangle_t - \Delta_{\RS}(\lambda;t)} \rmd t \\
&~+  \frac{\lambda}{4} \int_0^1 \abs{\E\left\langle {x_N}^2 {x_N^*}^2\right\rangle_t - a(0)} \rmd t.
\end{align*}
By Theorem~\ref{asymptotic_variance} and Lemma~\ref{diagonal_term}, the integrands on the right-hand side are bounded by $K/\sqrt{N} + KNe^{-c(t)N}$ where $c(t) > c_0 >0$ for all $t$ in the cases $\lambda < \lambda_c$ or $P_{\x}$ not symmetric about the origin, so the convergence follows. The function $\psi_{\RS}(\lambda)$ is the second term in the left-hand side. Formula~\eqref{sigma_RS} follows by integration.
\end{proofof}

\subsection{The main estimate: energy gap at suboptimal overlap}
\label{sxn:main_estimate}
Recall the interpolating Hamiltonian $H_t$ from~\eqref{interpolating_hamiltonian} with $r = \lambda q^*(\lambda)$. Let us now introduce the \emph{Franz-Parisi potential}~\citep{franz1995recipes}. For $m \in \R$ and $\epsilon>0$ we define 
\begin{equation}\label{free_energy_fixed_overlap}
\Phi_{\epsilon}(m;t) := \frac{1}{N}\E\log \int \indi\{R_{1,*} \in [m,m+\epsilon)\} e^{-H_t(\vct{x})}  \rmd P_{\x}^{\otimes N}(\vct{x}).
\end{equation}
This is the free energy of a subsystem of configurations having an overlap close to a fixed value $m$ with the planted signal $\vct{x}^*$. It is clear that $\Phi_{\epsilon}(m;t) \le \varphi(t)$, where the latter is the interpolating free energy defined in~\eqref{interpolating_free_energy}.  
The purpose of this section is to prove that when $m$ is far from $q^*(\lambda)$ then there is a sizable gap between $\Phi_{\epsilon}(m;t)$ and $\varphi(t)$. This estimate is a main ingredient is our proof of overlap concentration. (The other main ingredient is the cavity method, which will be presented in the next section.) To prove this we will need the auxiliary function
\[\phi_{\RS}(\lambda; t) = \sup_{q \ge 0} ~\psi(\lambda q) - \frac{t\lambda q^2}{4}.\]
One can show that the above formula is the limit of $\varphi(t)$ as $N \to \infty$, for example, by using the so called ``Aizenman-Sims-Starr scheme"~\citep[]{aizenman2003extended}; see~\citep{lelarge2016fundamental}. For our purposes we will only need the inequality
\begin{equation}\label{lower_bound_interpolated}
\varphi(t) \ge \phi_{\RS}(\lambda; t) - \frac{K\lambda t}{N},
\end{equation}
which can be proved using the interpolation method presented in the previous section and dropping the non-negative term $\E\langle (R_{1,*} - q^*)^2 \rangle$ from the expression analogous to~\eqref{free_energy_derivative} in this case. Now it suffices to compare $\Phi_{\epsilon}(m;t)$ to $\phi_{\RS}(\lambda;t)$. The result is given in Proposition~\ref{energy_deficit}, and we finish this subsection by Proposition~\ref{convergence_in_probability} showing convergence in probability of the overlaps as a straightforward consequence.


For $r \ge 0$ and $s\in \R$, we let 
\begin{equation}\label{psi_bar}
\widebar{\psi}(r,s) := \E_{x^*,z} \log \int \exp\left(\sqrt{r}zx + s xx^* - \frac{r}{2} x^2\right) \rmd P_{\x}(x).
\end{equation}
We see that $\widebar{\psi}(r,r) = \psi(r)$, but unlike $\psi$, the function $\widebar{\psi}$ does not have an interpretation as the $\KL$ between two distributions. The next lemma states some key properties of this function that will be useful later on.

\begin{lemma} \label{asymmetry_lower_bound}
For all $r\ge 0$, it holds that
\begin{itemize}
\item The function $s \mapsto \widebar{\psi}(r,s)$ is strictly convex, hence strongly convex on any compact. 
\item There exist a constant $c = c(r,P_{\x}) \ge 0$ such that $\widebar{\psi}(r,-r) \le \widebar{\psi}(r,r) - c$. If $r >0$ then $c >0$ \emph{unless} the prior $P_{\x}$ is symmetric about the origin (in which case $\widebar{\psi}(r,-r) = \widebar{\psi}(r,r)$).   
\item The map $r \mapsto c(r,P_{\x})$ is increasing on $\R_+$. 
\end{itemize}
\end{lemma}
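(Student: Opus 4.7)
The plan is to handle the three bullets in turn, with the KL identification in the second providing the key ingredient that also drives the third. For the first bullet, a standard differentiation under the integral sign (justified by the boundedness of $\mathrm{supp}\,P_{\x}$) yields
\[\frac{\partial^{2}}{\partial s^{2}}\widebar{\psi}(r,s) \;=\; \E_{x^{*},z}\!\bigl[(x^{*})^{2}\var_{G_{r,s}}(x)\bigr],\]
where $G_{r,s}$ is the (random) Gibbs measure on $\R$ with density proportional to $e^{\sqrt{r}zx+sxx^{*}-rx^{2}/2}\,dP_{\x}(x)$. Since $G_{r,s}$ inherits the support of $P_{\x}$, this quantity is strictly positive whenever $P_{\x}$ is non-degenerate, giving strict convexity; continuity of $\partial_{s}^{2}\widebar{\psi}$ in $s$ together with compactness then forces a positive minimum on any compact $s$-interval, i.e.\ strong convexity there.

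For the second bullet I would reinterpret $\widebar{\psi}(r,\pm r)$ as KL-type functionals. Let $\phi$ be the standard normal density and, for a probability measure $P$ on $\R$, let $p_{P,r}(y):=\int \phi(y-\sqrt{r}x)\,dP(x)$ denote the output density of the Gaussian channel $Y=\sqrt{r}X+z$ with $X\sim P$. The elementary ratio $\phi(y-\sqrt{r}x)/\phi(y)=\exp(\sqrt{r}yx-rx^{2}/2)$ gives
\[\int e^{\sqrt{r}Yx-\frac{r}{2}x^{2}}\,dP_{\x}(x) \;=\; \frac{p_{P_{\x},r}(Y)}{\phi(Y)}\qquad\text{for every }Y\in\R.\]
Substituting $Y=\sqrt{r}x^{*}+z\sim p_{P_{\x},r}$ identifies $\widebar{\psi}(r,r)=D_{\KL}(p_{P_{\x},r}\,\|\,\phi)$, while substituting $Y_{-}:=-\sqrt{r}x^{*}+z\sim p_{P_{-\x},r}$ (with $P_{-\x}$ the law of $-x^{*}$ when $x^{*}\sim P_{\x}$) yields $\widebar{\psi}(r,-r)=\E_{Y_{-}}\log(p_{P_{\x},r}(Y_{-})/\phi(Y_{-}))$. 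Adding and subtracting $\log p_{P_{-\x},r}(Y_{-})$ inside this last expectation, and using the reflection $p_{P_{-\x},r}(y)=p_{P_{\x},r}(-y)$ together with the symmetry of $\phi$ to conclude $D_{\KL}(p_{P_{-\x},r}\,\|\,\phi)=D_{\KL}(p_{P_{\x},r}\,\|\,\phi)$, produces the clean identity
\[c(r,P_{\x}) \;=\; \widebar{\psi}(r,r)-\widebar{\psi}(r,-r) \;=\; D_{\KL}(p_{P_{-\x},r}\,\|\,p_{P_{\x},r}) \;\ge\; 0.\]
Strict positivity for $r>0$ whenever $P_{\x}$ is asymmetric then follows from injectivity of convolution with $\phi$ on bounded-support measures (its characteristic function is nowhere zero): $p_{P_{-\x},r}=p_{P_{\x},r}$ forces $P_{-\x}=P_{\x}$.

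For the third bullet, the KL identity just obtained reduces monotonicity to a data-processing argument. For $r_{1}<r_{2}$ I would realize $Y_{r_{1}}\stackrel{d}{=}\sqrt{r_{1}/r_{2}}\,Y_{r_{2}}+\widetilde{z}$ with $\widetilde{z}\sim\normal(0,1-r_{1}/r_{2})$ independent of $Y_{r_{2}}$ (the total noise variance then equals $1$, as required), so the channel $X\mapsto Y_{r_{1}}$ factors through $X\mapsto Y_{r_{2}}$. The data processing inequality, applied with $X\sim P_{\x}$ versus $X\sim P_{-\x}$, then gives $c(r_{1},P_{\x})\le c(r_{2},P_{\x})$. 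I expect the only genuinely substantive step to be the KL rewriting in the second bullet: once that identity is in hand, the convexity and monotonicity assertions reduce to routine calculus and information-theoretic folklore.
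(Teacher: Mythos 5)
Your proposal is correct, and for the second and third bullets it takes a genuinely different route from the paper. The first bullet is identical: both compute $\partial_s^2\widebar{\psi}(r,s)=\E\big[x^{*2}(\langle x^2\rangle-\langle x\rangle^2)\big]>0$ and pass from strict to strong convexity on compacts by continuity. For the gap, the paper does \emph{not} identify $\widebar{\psi}(r,r)-\widebar{\psi}(r,-r)$ exactly; instead it interpolates between $P_{\x}$ and its symmetrization $\mu$ via $\rho_t=(1-t)\mu+t\nu$, shows that $\phi(t)=\widebar{\psi}(r,r;t)-\widebar{\psi}(r,-r;t)$ is convex with $\phi(0)=\phi'(0)=0$, and extracts the quantitative lower bound $c(r)=2\E\big[\langle \tfrac{\rmd\nu}{\rmd\mu}(x)-1\rangle_{\mu,r}^2\big]$, whose monotonicity in $r$ is then verified by a separate Gaussian-integration-by-parts/Nishimori computation showing the derivative is a square. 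You instead prove the clean identity $\widebar{\psi}(r,r)-\widebar{\psi}(r,-r)=D_{\KL}(p_{P_{-\x},r}\,\|\,p_{P_{\x},r})$ (your channel algebra and the reflection $p_{P_{-\x},r}(y)=p_{P_{\x},r}(-y)$ check out), take $c$ to be this exact gap, and get nonnegativity for free, strict positivity from the same Gaussian-convolution injectivity argument the paper uses at the end of its proof, and monotonicity from the data-processing inequality via the degradation $Y_{r_1}\overset{d}{=}\sqrt{r_1/r_2}\,Y_{r_2}+\widetilde z$. Your choice of $c$ is the largest admissible one, so your statement is formally at least as strong, and only the qualitative properties of $c$ (nonnegative, positive for $r>0$ unless symmetric, nondecreasing) are used downstream in Proposition~\ref{energy_deficit}, so your version suffices; what the paper's interpolation buys in exchange is an explicit quadratic lower bound on the gap in terms of the asymmetric part of the prior. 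Two minor points: like the paper, your strict-convexity claim implicitly assumes $P_{\x}$ is not a point mass, and ``increasing'' in the third bullet should be read as nondecreasing in both arguments (DPI, like the paper's derivative computation, only gives a weak inequality).
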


The proof of the above lemma can be found in the Appendix. We now state a useful interpolation bound on $\Phi_{\epsilon}(m;t)$. This is a simpler version of the Guerra-Talagrand $\oneRSB$ interpolation bound at fixed overlap, a key invention that ultimately paved the way towards a proof of the Parisi formula~\citep{guerra2003broken,talagrand2006parisi}. In some sense, since we are dealing with a planted model, we only need a replica-symmetric version of this bound.    
\begin{proposition} \label{fixed_overlap_upper_bound}
Fix $m \in \R$, $\epsilon>0$, $t\in [0,1]$ and $\lambda \ge 0$. Let $r = (1-t) \lambda q^* + t \lambda |m|$, $\bar{r} = (1-t) \lambda q^* + t \lambda m$. 
There exist constants $K = K(P_{\x}) > 0$, $K' = K'(\lambda) >0$ such that
\begin{align*}
\Phi_{\epsilon}(m; t) &\le  \widebar{\psi}\big(r, \bar{r}\big) - \frac{t\lambda m^2}{4}  - K\left(m - \partial_s\widebar{\psi}\big(r, \bar{r}\big)\right)^2 
+ K' \epsilon^2 + \frac{K'}{N}.
\end{align*}
\end{proposition}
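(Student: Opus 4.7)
The plan is to bound $\Phi_\epsilon(m;t)$ by a second Guerra-type interpolation, in a new parameter $u\in[0,1]$, that links the constrained Hamiltonian at $u=1$ to a fully decoupled one-body Hamiltonian at $u=0$ whose effective channel parameters are exactly $(r,\bar r)$. Concretely, I would keep the $(1-t)$-part of $H_t$ untouched and replace the $t$-part (the two-body terms) linearly in $u$ by a scalar Gaussian channel of noise variance $t\lambda|m|$ and signal coupling $t\lambda m$, using fresh Gaussians $\tilde z_i$ independent of $(\mtx W,\bm z,\vct x^*)$. At $u=0$ the combined Hamiltonian is an independent sum of scalar channels with parameters $\big((1{-}t)\lambda q^* + t\lambda|m|,\,(1{-}t)\lambda q^* + t\lambda m\big)=(r,\bar r)$. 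The asymmetric use of $|m|$ for the noise but signed $m$ for the signal is forced: the noise variance must be non-negative while the signal coupling may carry either sign.

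Set $\varphi_m(u):=\tfrac{1}{N}\E\log\int \indi\{R_{1,*}\in[m,m+\epsilon)\}\,e^{-H_u(\vct x)}\rmd P_\x^{\otimes N}(\vct x)$, so that $\varphi_m(1)=\Phi_\epsilon(m;t)$. I would then compute $\varphi_m'(u)$ by Gaussian integration by parts on both the $W_{ij}$ and the $\tilde z_i$ disorder, exactly as in the derivation of~\eqref{derivative_before_nishimori}. The crucial point is that the Nishimori identity is \emph{not} available, because the overlap indicator breaks the Bayesian structure; both a replica-replica and a replica-spike overlap must therefore be tracked separately. After completing squares one obtains
\begin{align*}
\varphi_m'(u) \;=\; -\frac{t\lambda}{4}\E\langle (R_{1,2}-|m|)^2\rangle_{m,t,u} \;+\; \frac{t\lambda}{2}\E\langle (R_{1,*}-m)^2\rangle_{m,t,u} \;-\; \frac{t\lambda m^2}{4} \;+\; O\!\Big(\frac{1}{N}\Big),
\end{align*}
and the asymmetric pair $(|m|,m)$ is precisely what makes the $m^2$-coefficient collapse to $-t\lambda m^2/4$ (since $|m|^2=m^2$). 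The first square is non-positive and can be dropped, and on the support of the indicator $(R_{1,*}-m)^2\le \epsilon^2$; integrating over $u\in[0,1]$ gives
\begin{align*}
\Phi_\epsilon(m;t) \;\le\; \varphi_m(0) \;-\; \frac{t\lambda m^2}{4} \;+\; K'\epsilon^2 \;+\; \frac{K'}{N}.
\end{align*}

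The remaining task is to upper-bound $\varphi_m(0)$ and extract the quadratic penalty $-K(m-\partial_s\widebar\psi(r,\bar r))^2$. Since $-H_0$ is a sum of one-body terms, I would apply a Cram\'er tilt, $\indi\{R_{1,*}\in[m,m+\epsilon)\}\le \exp\!\big(sN(R_{1,*}-m)+|s|N\epsilon\big)$ for $s\in\R$, and absorb $sN R_{1,*}=s\sum_i x_i x_i^*$ into the signal coupling of the factorized integral. This yields $\varphi_m(0)\le \inf_s\{\widebar\psi(r,\bar r+s)-sm\}$ up to errors absorbed into $K'\epsilon^2+K'/N$. Because $P_\x$ has bounded support, the optimizer $s$ is confined to a compact set independent of $N$ and $m$. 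Lemma~\ref{asymmetry_lower_bound} gives strict convexity of $s\mapsto \widebar\psi(r,s)$, which on any such compact set upgrades to strong convexity; equivalently, a uniform Taylor upper bound $\widebar\psi(r,\bar r+s)\le \widebar\psi(r,\bar r)+s\,\partial_s\widebar\psi(r,\bar r)+Cs^2$ holds, and exact minimization of the resulting quadratic in $s$ produces the desired penalty with $K=1/(4C)>0$. Combining with the interpolation estimate yields the claim.

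The main obstacle, in my view, is the loss of the Nishimori identity: one must carry both replica-replica and replica-spike overlap squares through the interpolation and then match the one-body target $(r,\bar r)$ so that the leading quadratic-in-$m$ contribution collapses exactly to $-t\lambda m^2/4$. A secondary subtlety is that the Cram\'er step returns only a linear Legendre gain in general, and promoting this to the \emph{quadratic} penalty $-K(m-\partial_s\widebar\psi)^2$ is what forces the appeal to the \emph{strong} (not merely strict) convexity of $\widebar\psi$ on the compact range of tilt parameters supplied by the bounded support of $P_\x$.
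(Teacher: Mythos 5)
Your interpolation stage is exactly the paper's: the same two-parameter Hamiltonian (noise variance $t\lambda|m|$, signal coupling $t\lambda m$, the $(1-t)$-part left untouched), the same Gaussian integration by parts without the Nishimori identity, the same observation that the replica--replica square is non-positive and can be dropped while the replica--spike square is at most $\epsilon^2$ on the support of the indicator, and the same collapse $\frac{t\lambda}{4}|m|^2-\frac{t\lambda}{2}m^2=-\frac{t\lambda}{4}m^2$. Where you diverge is in bounding the decoupled free energy $\varphi(t,0)$. The paper conditions on $(\vct{z},\vct{x}^*)$, invokes sub-Gaussian concentration of $\frac1N\sum_i(x_i-\E_{G}[x_i])x_i^*$ under the product measure $G$, and then uses a Jensen step to replace the random $\widehat q$ by $\E[\widehat q]=\partial_s\widebar{\psi}(r,\bar r)$; you instead apply the Cram\'er tilt $\indi\{R_{1,*}\in[m,m+\epsilon)\}\le e^{sN(R_{1,*}-m)+|s|N\epsilon}$, absorb the tilt into the signal coupling to get $\widebar{\psi}(r,\bar r+s)-sm+|s|\epsilon$, and optimize over $s$. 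These are two faces of the same sub-Gaussian estimate; your version bypasses the Jensen/convexity step at the price of having to control the range of the optimal tilt.

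Two points need repair before the second stage closes. First, the Taylor upper bound $\widebar{\psi}(r,\bar r+s)\le\widebar{\psi}(r,\bar r)+s\,\partial_s\widebar{\psi}(r,\bar r)+Cs^2$ is \emph{not} a consequence of strong convexity --- strong convexity gives the reverse inequality. What you actually need is a uniform \emph{upper} bound on $\partial_s^2\widebar{\psi}(r,\cdot)=\E\left[x^{*2}(\langle x^2\rangle-\langle x\rangle^2)\right]$, which holds with $C=C(P_{\x})$ precisely because $P_{\x}$ has bounded support; cite that, not Lemma~\ref{asymmetry_lower_bound}. Second, the tilt leaves an additive error $|s|\epsilon$, which for $s$ in a compact set is $O(\epsilon)$, not $O(\epsilon^2)$, so it cannot simply be ``absorbed into $K'\epsilon^2$'' as written; you must split $|s^*|\epsilon\le\delta\left(m-\partial_s\widebar{\psi}(r,\bar r)\right)^2+K\epsilon^2/\delta$ by Young's inequality and fold the first piece into the quadratic penalty by shrinking the constant $K$. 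Both fixes are routine and require no new ideas, so the argument goes through.
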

\begin{proof}
To obtain a bound on $\Phi_{\epsilon}(m;t)$ for any fixed $t$, we use the interpolation method with Hamiltonian 
\begin{align*}
-H_{t,s}(\vct{x}) &:=  \sum_{i < j}-\frac{ts\lambda}{2N} x_i^2x_j^2 + \sqrt{\frac{ts\lambda}{N}} W_{ij}x_ix_j + \frac{ts\lambda}{N}x_ix_i^*x_jx_j^*\\
&~+\sum_{i=1}^N -\frac{(1-t)\lambda q^*}{2} x_i^2 +  \sqrt{(1-t)\lambda q^*} z_{i}x_i +  (1-t)\lambda q^* x_ix_i^*\\
&~+\sum_{i=1}^N -\frac{(1-s)t\lambda |m|}{2} x_i^2 +  \sqrt{(1-s)t\lambda |m|} z_{i}'x_i +  (1-s)t\lambda m x_ix_i^*,
\end{align*}
by varying $s\in[0,1]$. The r.v.'s $W, z, z'$ are all i.i.d.\ standard Gaussians independent of everything else. We define
\[\varphi(t,s) :=  \frac{1}{N} \E \log \int  \indi\{R_{1,*} \in [m,m+\epsilon)\} e^{- H_{t,s}(\vct{x})} \rmd P_{\x}^{\otimes N}(\vct{x}).\]
We compute the derivative w.r.t.\ $s$. The same algebraic manipulations conducted in the computation of $\varphi'$ up to~\eqref{derivative_before_nishimori} apply here, and we get
\begin{align*}
\partial_s \varphi(t,s) = & -\frac{\lambda t}{4} \E\left\langle (\vct{x}^{(1)}\cdot\vct{x}^{(2)}-|m|)^2\right\rangle_{t,s} + \frac{\lambda t}{4} |m|^2 + \frac{\lambda t}{4N^2} \sum_{i=1}^N \E\left\langle {x_i^{(1)}}^2{x_i^{(2)}}^2\right\rangle_{t,s} \\
&~~~+\frac{\lambda t}{2} \E\left\langle (\vct{x}\cdot\vct{x}^*- m)^2\right\rangle_{t,s} - \frac{\lambda t}{2} m^2 - \frac{\lambda t}{2N^2} \sum_{i=1}^N \E\left\langle {x_i}^2{x_i^{*}}^2\right\rangle_{t,s},
\end{align*}
where $\langle \cdot \rangle_{t,s}$ is the Gibbs average w.r.t.\ the Hamiltonian $-H_{t,s}(\vct{x}) +\log \indi\{\vct{x}\cdot\vct{x}^* \in [m,m+\epsilon)\}$. A few things now happen. Notice that the planted term (first term in the second line) is trivially smaller than $t\lambda \epsilon^2/2$ due to the overlap restriction. Moreover, the last terms in both lines are of order $1/N$ since the variables $x_i$ are bounded. The first term in the first line, which involves the overlap between two replicas, is more challenging. What makes this term difficult to control is that the Gibbs measure $\langle \cdot \rangle_{t,s}$ no longer satisfies the Nishimori property due to the overlap restriction, so the overlap between two replicas no longer has the same distribution as the overlap of one replica with the planted spike. Fortunately, this term is always non-positive so we can ignore it altogether and obtain an upper bound:
\begin{equation*}
\partial_s \varphi(t,s) \le - \frac{\lambda t}{4} m^2 +\frac{\lambda t \epsilon^2}{2}+ \frac{\lambda  K}{N}.
\end{equation*}   
Integrating over $s$, we get
\[\Phi_{\epsilon}(m;t) \le \varphi(t,0) - \frac{\lambda t}{4} m^2 +\frac{\lambda t \epsilon^2}{2} + \frac{\lambda K}{N}.\] 
Now it remains to show that  
\[\varphi(t,0) \le \widebar{\psi}(r,\bar{r}) - K\left(m - \partial_s\widebar{\psi}(r,\bar{r})\right)^2 + \bigo(\epsilon^2).\]
By properties of the Gaussian distribution, we can write $-H_{t,0} = \sum_{i=1}^N  \sqrt{r} z_i x_i +  \bar{r} x_ix_i^* -\frac{r}{2} x_i^2$.
Define the following (random) probability measure
\[G(A) := \frac{\int_A e^{-H_{t,0}(\vct{x})}  \rmd P_{\x}^{\otimes N}(\vct{x})}{\int e^{-H_{t,0}(\vct{x})}  \rmd P_{\x}^{\otimes N}(\vct{x})},\]
for all Borel sets $A \subseteq \R^N$. We observe that conditionally on the Gaussian vector $\vct{z}$ and the planted vector $\vct{x^*}$, $G$ is a product measure due to the additive form of $H_{t,0}$. Moreover,
\[\varphi(t;0)  -  \widebar{\psi}(r,\bar{r}) = \frac{1}{N} \E\log G(\{R_{1,*} \in [m,m+\epsilon)\}),\]
so will be interested in a large deviation bound on the above quantity.
The prior $P_{\x}$ is of bounded support, thus the marginals of $G$ (conditional on $\vct{x}^*$ and $\vct{z}$) are clearly sub-Gaussian. Therefore, by concentration of measure, the empirical average $\vct{x}\cdot\vct{x}^*/N$ must concentrate around its expectation: for all $u\ge 0$
\[G\left(\left\{\frac{1}{N}\sum_{i=1}^N (x_i -\E_{G}[x_i])x^*_i \ge u\right\}\right) \le e^{-N u^2/2K^2},\]
where $K$ is for instance twice the diameter of the support of $P_{\x}$. 
This implies
\[\frac{1}{N} \log G(\{R_{1,*} \in [m,m+\epsilon)\}) \le -\frac{(m - \widehat{q})^2}{2K^2}\indi\{\widehat{q} \le m\} 
-\frac{(m +\epsilon - \widehat{q})^2}{2K^2}\indi\{\widehat{q} \ge m+\epsilon\},\]
where $\widehat{q} = \frac{1}{N}\sum_{i=1}^N \E_{G}[x_i]x^*_i$. 
Now by Jensen's inequality ($x \mapsto (x-a)^2\indi\{x \le a\}$ and $x \mapsto (x-b)^2\indi\{x \ge b\}$ are convex), we can write
\begin{align*}
\frac{1}{N}\E \log G(\{R_{1,*} \in [m,m+\epsilon)\}) &\le -\frac{(m -\E[\widehat{q}])^2}{2K^2}\indi\left\{\E[\widehat{q}] \le m\right\} \\
&~~- \frac{(m +\epsilon - \E[\widehat{q}])^2}{2K^2}\indi\left\{\E[\widehat{q}] \ge m+\epsilon\right\}.
\end{align*}
Since
\[\E_G[x_i] = \frac{\int x e^{\sqrt{r}z_i x + \bar{r}x x_i^* - \frac{r}{2}x^2}  \rmd P_{\x}(x)}{\int e^{\sqrt{r}z_i x +\bar{r}x x_i^* - \frac{r}{2}x^2}  \rmd P_{\x}(x)},\] 
we have $\E[\widehat{q}] = \partial_s\psi(r,\bar{r})$. We now use the elementary inequality $\half (x-a)^2 - (a-b)^2 \le (x-a)^2\indi\{x \le a\} +(x-b)^2\indi\{x \ge b\}$, valid for all $x \in \R$ and $a \le b$, to simplify the above bound and obtain 
\[\frac{1}{N}\E \log G(\{R_{1,*} \in [m,m+\epsilon)\}) \le -\frac{(m - \partial_s\widebar{\psi}(r,\bar{r}))^2}{4K^2} + \frac{\epsilon^2}{2K^2}.\]
This allows us to conclude.
\end{proof}

  
A consequence of the above proposition is an energy gap property: if $m$ is far from $q^*(\lambda)$ then the free energy $\Phi_{\epsilon}(m;t)$ of the configurations having overlap $m$ with $\vct{x}^*$ is strictly smaller than $\phi_{\RS}(\lambda;t)$:  
\begin{proposition}\label{energy_deficit}
For all $\lambda \in \A$, all $\epsilon > 0$ and all $t \in [0,1]$, there exist constants  $c = c(\lambda,\epsilon, t, P_{\x}) \ge 0$ and $\epsilon' = \epsilon'(\lambda,\epsilon)>0$ such that
\[\forall m \in \R \qquad |m - q^*(\lambda)| \ge \epsilon \qquad \implies \qquad \Phi_{\epsilon'}(m;t) \le \phi_{\RS}(\lambda;t) - c.\]
Moreover, if $t <1$ then $c>0$. If either $\lambda <\lambda_c$ or $P_{\x}$ is not symmetric about the origin $\inf_{t \in [0,1]} c(t)>0$. Lastly, if $\lambda > \lambda_c$ and $P_{\x}$ is symmetric, then $c(t) \sim c_0(1-t)$ as $t \to 1$, for some $c_ 0 = c_0(\lambda,\epsilon,P_{\x})>0$.
\end{proposition}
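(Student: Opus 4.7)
The strategy is to apply Proposition~\ref{fixed_overlap_upper_bound} and show its right-hand side is bounded above by $\phi_{\RS}(\lambda;t)-c$ whenever $|m-q^*|\ge\epsilon$, with the dependence on $t$ as claimed. Write $q(r):=\partial_s\widebar{\psi}(r,r)=\E\langle x\rangle_r^2$ (the second equality by Nishimori), and note that at $m=q^*$ one has $r=\bar r=\lambda q^*$, $\widebar{\psi}(r,\bar r)=\psi(\lambda q^*)$, and $q(\lambda q^*)=q^*$ by the $\RS$ fixed-point equation. The right-hand side of Proposition~\ref{fixed_overlap_upper_bound} then collapses to $\psi(\lambda q^*)-t\lambda(q^*)^2/4\le\phi_{\RS}(\lambda;t)$ (taking $q=q^*$ in the variational definition of $\phi_{\RS}(\lambda;t)$), so all the work lies in extracting a strict gap away from $m=q^*$.

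For $m\ge 0$, $r=\bar r$, $\widebar{\psi}(r,\bar r)=\psi(r)$, and convexity of $\psi$ (since $\psi''=q'/2\ge 0$) gives
\[\psi(r)-\frac{t\lambda m^2}{4}\le (1-t)\psi(\lambda q^*)+tF(\lambda,m).\]
Because $q^*$ uniquely maximizes $F(\lambda,\cdot)$ on the compact range of overlaps forced by the bounded support of $P_{\x}$, continuity and compactness yield $F(\lambda,q^*)-F(\lambda,m)\ge c_F(\lambda,\epsilon)>0$ for $|m-q^*|\ge\epsilon$; the right-hand side above is then at most $\phi_{\RS}(\lambda;t)-tc_F$. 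The penalty $K(m-q(r))^2$ takes over near $t=0$, where $r\approx\lambda q^*$ and continuity of $q$ force $|m-q(r)|\gtrsim\epsilon/2$, giving a contribution $\gtrsim K\epsilon^2/4$. Together these deliver a $t$-uniform positive gap for $m\ge 0$.

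For $m<0$, decompose $\bar r=\alpha(-r)+(1-\alpha)r$ with $\alpha=t\lambda|m|/r\in(0,1]$; convexity of $s\mapsto\widebar{\psi}(r,s)$ gives $\widebar{\psi}(r,\bar r)\le \alpha\widebar{\psi}(r,-r)+(1-\alpha)\psi(r)$. When $P_{\x}$ is asymmetric, Lemma~\ref{asymmetry_lower_bound} supplies $\widebar{\psi}(r,-r)\le\psi(r)-c(r)$ with $c(r)>0$, yielding an extra gap $\alpha c(r)$ that, together with the $m\ge 0$ analysis applied to $|m|$, produces a $t$-uniform positive gap. When $P_{\x}$ is symmetric we only have $\widebar{\psi}(r,-r)=\psi(r)$, and the critical point is $m=-q^*$: there $|m|=q^*$ kills the $F$-gap, and $\partial_s\widebar{\psi}(r,-r)=-q(r)=-q^*=m$ (computed via the substitution $x\mapsto -x$) makes the penalty vanish at $t=1$. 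The remaining source of gap is the strong convexity of $s\mapsto\widebar{\psi}(r,s)$ on compacts (also part of Lemma~\ref{asymmetry_lower_bound}), which sharpens the convex-combination bound to $\widebar{\psi}(r,\bar r)\le\psi(r)-2\gamma t(1-t)\lambda^2|m|q^*$; at $m=-q^*$ this is $\sim c_0(1-t)$ as $t\to 1$, while a Taylor expansion of $\partial_s\widebar{\psi}$ around $\bar r=-r$ shows the penalty contributes only $O((1-t)^2)$ there, so the linear rate is indeed the dominant scaling.

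The main obstacle is this last subcase: three of the four candidate gap sources---the $F$-gap, the penalty term, and the Lemma~\ref{asymmetry_lower_bound} asymmetry gap---all degenerate at $(m,t)=(-q^*,1)$, so the $c(t)\sim c_0(1-t)$ rate must be squeezed out of the strong convexity of $\widebar\psi(r,\cdot)$ alone, and one has to verify that the various terms combine without cancellation. Finally, $\epsilon'$ is chosen small enough (depending on $\lambda,\epsilon,t,P_{\x}$) so that the $O({\epsilon'}^2+1/N)$ error from Proposition~\ref{fixed_overlap_upper_bound} is absorbed into the gap $c$.
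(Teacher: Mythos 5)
Your proposal is correct and follows essentially the same route as the paper: convexity of $\psi$ plus the uniqueness of the maximizer $q^*$ for $m\ge 0$, the convex decomposition of $\bar r$ between $\pm r$ combined with Lemma~\ref{asymmetry_lower_bound} for $m<0$, the penalty term $(m-\partial_s\widebar{\psi}(r,\bar r))^2$ to cover small $t$, and strong convexity of $s\mapsto\widebar{\psi}(r,s)$ to extract the $t(1-t)$ gap in the symmetric case near $m=-q^*$. You correctly identify the degenerate corner $(m,t)=(-q^*,1)$ as the crux, exactly as in the paper's argument.
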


A direct consequence of the above energy gap result is the convergence in probability of the overlaps: 
\begin{proposition}\label{convergence_in_probability}
For all $\lambda \in \A$, all $\epsilon >0$ and all $t \in [0,1]$, there exist constants $K = K(\lambda,\epsilon) \ge 0,~ c = c(\lambda,\epsilon, t,P_{\x}) \ge 0$ such that 
\[\E \left\langle \indi\left\{\big|R_{1,*} - q^*(\lambda)\big| \ge \epsilon \right\}\right\rangle_t \le K e^{-cN},\]
where the constant $c$ the same properties as in Proposition~\ref{energy_deficit}, except that $c(t) \sim c_0(1-t)^2$ as $t \to 1$.   
Moreover, if $\lambda > \lambda_c$, $P_{\x}$ is symmetric and $t=1$ then one still has
\[\E \left\langle \indi\left\{\big||R_{1,*}| - q^*(\lambda)\big| \ge \epsilon \right\}\right\rangle_t \le K e^{-cN},\]
with $c = c(\lambda,\epsilon, P_{\x})>0$.
\end{proposition}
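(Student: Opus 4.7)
The plan is to bootstrap the energy-gap estimate of Proposition~\ref{energy_deficit} into a Gibbs-probability estimate via Gaussian concentration of the free energy, combined with a slicing argument. Since $P_\x$ has bounded support, $R_{1,*}$ lies almost surely in a compact interval, so one can cover the bad event $\{|R_{1,*} - q^*(\lambda)| \ge \epsilon\}$ by $O(1/\epsilon')$ slices $[m_k, m_k + \epsilon')$ with $|m_k - q^*(\lambda)| \ge \epsilon/2$, where $\epsilon' = \epsilon'(\lambda,\epsilon)$ is provided by Proposition~\ref{energy_deficit}. Writing $Z_t = \int e^{-H_t(\vct{x})}\,\rmd P_\x^{\otimes N}(\vct{x})$ and $Z_{\epsilon'}(m)$ for the same integral with the indicator of $\{R_{1,*} \in [m, m+\epsilon')\}$ inserted, one has $\E\langle\indi\{R_{1,*} \in [m, m+\epsilon')\}\rangle_t = \E[Z_{\epsilon'}(m)/Z_t]$, and a union bound reduces the claim to an exponential upper bound on each such single-slice ratio.

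For the single-slice bound, three ingredients combine: (i) Proposition~\ref{energy_deficit} gives $\Phi_{\epsilon'}(m;t) \le \phi_{\RS}(\lambda;t) - c$ with $c = c(\lambda,\epsilon,t,P_\x)$; (ii) the interpolation lower bound~\eqref{lower_bound_interpolated} gives $\varphi(t) \ge \phi_{\RS}(\lambda;t) - K/N$; and (iii) Gaussian concentration applied to $\log Z_t$ and $\log Z_{\epsilon'}(m)$, which are $O(\sqrt{N})$-Lipschitz in the noise variables $(W_{ij}, z_i)$, together with bounded-differences concentration in the bounded variables $\vct{x}^*$, yields $\Pr(|\tfrac{1}{N}\log Z_t - \varphi(t)| > \delta) \le 2e^{-\kappa N\delta^2}$ and analogously for $Z_{\epsilon'}(m)$, with $\kappa = \kappa(\lambda,P_\x)$ independent of $N$ and $t$. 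Choosing $\delta = c/4$, on the intersection of the two concentration events (of probability at least $1 - 4e^{-\kappa Nc^2/16}$) one gets $Z_{\epsilon'}(m)/Z_t \le e^{K}e^{-Nc/2}$; on the complement the trivial bound $Z_{\epsilon'}(m)/Z_t \le 1$ applies. Taking expectations and summing over the finitely many slices produces an overall bound of order $e^{-Nc'}$ with $c' = \min(c/2,\,\kappa c^2/16)$. When $c \ge c_0 > 0$ uniformly in $t$ this gives the desired uniform rate; when $c(t) \sim c_0(1-t)$ near $t = 1$ (symmetric prior, $\lambda > \lambda_c$, $t < 1$), the $c^2$ bottleneck produces $c'(t) \sim c_0^2(1-t)^2/(16\kappa)$, matching the statement.

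The main obstacle is the final assertion, at $t = 1$ with $P_\x$ symmetric and $\lambda > \lambda_c$, because Proposition~\ref{energy_deficit} only supplies $c(1) = 0$: slices near $m \approx -q^*$ are as free-energetically favorable as those near $+q^*$ under sign symmetry, and the hypothesis $|m - q^*| \ge \epsilon$ does not exclude them. The remedy is that the relevant bad event $\{||R_{1,*}| - q^*| \ge \epsilon\}$ corresponds, after slicing, to values $m$ that are far from \emph{both} $\pm q^*$, and for such $m$ the bound of Proposition~\ref{fixed_overlap_upper_bound} remains genuinely strict at $t = 1$. Using $\widebar{\psi}(r,-r) = \widebar{\psi}(r,r) = \psi(r)$ for symmetric priors from Lemma~\ref{asymmetry_lower_bound}, the $t=1$ bound depends on $|m|$ alone:
\[ \Phi_{\epsilon'}(m;1) \le \psi(\lambda|m|) - \tfrac{\lambda m^2}{4} - K\bigl(|m| - \partial_s\widebar{\psi}(\lambda|m|,\lambda m)\bigr)^2 + O(\epsilon'^2 + 1/N). \]
Since $q^*$ is the unique maximizer of $q \mapsto \psi(\lambda q) - \lambda q^2/4$ on $\R_+$ and $\lambda \in \A$ provides second-order strict separation at $q^*$, one obtains a $t$-uniform strictly positive gap $\Phi_{\epsilon'}(m;1) \le \phi_{\RS}(\lambda;1) - c$ valid for all $m$ with $||m|-q^*| \ge \epsilon/2$, with $c = c(\lambda,\epsilon,P_\x) > 0$. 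The slicing-and-concentration argument of the preceding paragraph then runs unchanged to deliver the claim.
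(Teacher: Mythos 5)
Your argument is correct and is essentially the paper's own proof: the same slicing of the bad event into $O(1/\epsilon')$ overlap windows, the same combination of the sub-Gaussian concentration of $\frac{1}{N}\log Z$ (Lemma~\ref{sub_gaussian_concentration}) with the lower bound~\eqref{lower_bound_interpolated} and the energy gap of Proposition~\ref{energy_deficit}, and the same observation that the quadratic dependence of the Gaussian deviation probability on the gap is what degrades $c(t)\sim c_0(1-t)$ to $c(t)\sim c_0(1-t)^2$. Your treatment of the symmetric $t=1$ case (folding negative slices onto positive ones via $\widebar{\psi}(r,-r)=\widebar{\psi}(r,r)$ and invoking uniqueness of the maximizer of $F(\lambda,\cdot)$ on $\R_+$) is exactly the paper's remark that it suffices to consider non-negative $m$, just spelled out in more detail.
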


To prove the above lemma, we first show that the partition function of the model enjoys sub-Gaussian concentration in logarithmic scale. This is an elementary consequence of two classical concentration-of-measure results: concentration of Lipschitz functions of Gaussian random variables, and concentration of \emph{convex} Lipschitz function of \emph{bounded} random variables. See~\cite{boucheron2013concentration} and~\cite{van2014probability}.   

\begin{lemma} \label{sub_gaussian_concentration}
Let $A$ be a Borel subset of $\R^N$, and define the random variable
\[Z := \int_A e^{-H_t(\vct{x})}  \rmd P_{\x}^{\otimes N}(\vct{x}).\] 
There exist a constant $K>0$ depending only on $\lambda$ and $P_{\x}$ such that for all $u \ge 0$, 
\[\Pr\left(\abs{\frac{1}{N} \log Z - \frac{1}{N} \E\log Z } \ge u\right) \le 4e^{-Nu^2/K}.\]
\end{lemma}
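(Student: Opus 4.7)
The plan is to decompose the randomness in $Z$ as $(W, z, \vct{x}^*)$, with $(W,z)$ Gaussian and $\vct{x}^*$ bounded with independent coordinates, and to use the triangle inequality
\begin{equation*}
\tfrac{1}{N}\log Z - \tfrac{1}{N}\E\log Z
= \big(\tfrac{1}{N}\log Z - \tfrac{1}{N}\E_{W,z}\log Z\big)
+ \big(\tfrac{1}{N}\E_{W,z}\log Z - \tfrac{1}{N}\E\log Z\big)
\end{equation*}
to bound each summand with a sub-Gaussian tail of variance proxy $K/N$ via the two concentration tools cited in the lemma.

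\textbf{Gaussian part.} Fix $\vct{x}^*$ and differentiate in the Gaussian variables using the explicit form of $H_t$ in~\eqref{interpolating_hamiltonian}:
\begin{equation*}
\partial_{W_{ij}}\tfrac{1}{N}\log Z = \tfrac{1}{N}\sqrt{\tfrac{t\lambda}{N}}\,\langle x_i x_j\rangle_{A}, \qquad
\partial_{z_i}\tfrac{1}{N}\log Z = \tfrac{1}{N}\sqrt{(1-t)r}\,\langle x_i\rangle_{A},
\end{equation*}
where $\langle \cdot \rangle_A$ is the Gibbs average associated with $Z$. Since $P_\x$ is supported in $[-C, C]$, both Gibbs averages are uniformly bounded and the squared $\ell^2$-norm of the full gradient is at most $K(\lambda, P_\x)/N$. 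The Borell--Tsirelson--Ibragimov--Sudakov Gaussian concentration inequality then yields $\Pr(|\tfrac{1}{N}\log Z - \tfrac{1}{N}\E_{W,z}\log Z| \ge u \mid \vct{x}^*) \le 2e^{-Nu^2/K_1}$, which integrates in $\vct{x}^*$ to an unconditional bound of the same form.

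\textbf{Bounded part.} Set $g(\vct{x}^*) := \tfrac{1}{N}\E_{W,z}\log Z$. The plan is to exhibit $g$ as a convex $O(1/\sqrt{N})$-Lipschitz function of $\vct{x}^*$ up to an additive $O(1/N)$ defect, and apply Talagrand's convex-Lipschitz concentration inequality. Rewriting the signal-dependent part of $-H_t$ as
\begin{equation*}
\tfrac{t\lambda}{2N}(\vct{x}\cdot\vct{x}^*)^2 - \tfrac{t\lambda}{2N}\sum_i x_i^2 (x_i^*)^2 + (1-t)r\,(\vct{x}\cdot\vct{x}^*),
\end{equation*}
the first and third summands are convex in $\vct{x}^*$ for every $\vct{x}$, while the middle term is uniformly bounded by $\tfrac{t\lambda C^4}{2}$; hence it perturbs $\log Z$ by $O(1)$ and $g$ by $O(1/N)$. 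The Lipschitz bound follows because $\nabla_{\vct{x}^*} g$ has components $\tfrac{1}{N}\langle \partial_{x_k^*}(-H_t)\rangle_A$ with $\ell^2$-norm $O(1/\sqrt{N})$ by the same computation as above. Talagrand's inequality applied to the convex surrogate then gives $\Pr(|g - \E g| \ge u) \le 2e^{-Nu^2/K_2}$, and combining with the Gaussian step via a union bound yields the claimed $4e^{-Nu^2/K}$ estimate.

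\textbf{Main obstacle.} The only mildly delicate point is that $-H_t(\cdot;\vct{x}^*)$ is not exactly convex in $\vct{x}^*$: the non-PSD diagonal correction in the signal quadratic form spoils strict convexity. Boundedness of $P_\x$ makes this defect $O(1)$ uniformly in $\vct{x}$, which is enough to absorb it into a harmless additive shift of $g$. If one prefers to bypass the convexity discussion entirely, a direct computation shows that changing a single coordinate $x_k^*$ alters $-H_t(\vct{x};\vct{x}^*)$ by $(x_k^* - \tilde{x}_k^*)\,x_k\big[\tfrac{t\lambda}{N}\sum_{j\ne k}x_j x_j^* + (1-t)r\big]$, which is bounded by a constant uniformly in $\vct{x}$; hence $\tfrac{1}{N}\log Z$ has bounded differences of order $O(1/N)$, and McDiarmid's inequality yields the same sub-Gaussian conclusion.
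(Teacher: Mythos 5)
Your proposal is correct and follows essentially the same route as the paper: condition on $\vct{x}^*$ and apply Gaussian (Borell--TIS) concentration to the $K\sqrt{\lambda/N}$-Lipschitz map $(\mtx{W},\vct{z})\mapsto \frac{1}{N}\log Z$, then apply Talagrand's convex-Lipschitz concentration to $\vct{x}^*\mapsto \frac{1}{N}\E[\log Z\mid \vct{x}^*]$, and combine the two tails by a union bound. Your extra care with the diagonal term $-\frac{t\lambda}{2N}\sum_i x_i^2 x_i^{*2}$ --- which spoils exact convexity of $-H_t$ in $\vct{x}^*$ but is uniformly $O(1)$ and hence absorbable --- correctly patches a point the paper's one-line proof glosses over, and your McDiarmid bounded-differences alternative for the $\vct{x}^*$ part is an equally valid shortcut.
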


\vspace{.5cm}
\begin{proofof}{Lemma~\ref{sub_gaussian_concentration}} It suffices to notice that the map $(\mtx{W},\vct{z}) \mapsto \frac{1}{N}\log Z(\mtx{W},\vct{z})$ is Lipschitz with constant $K\sqrt{\frac{\lambda}{N}}$ for every $\vct{x}^* \in \R^N$, and that the map $\vct{x}^* \mapsto \frac{1}{N}\E[\log Z |\vct{x^*}]$ (where the expectation is over $\mtx{W}, \vct{z}$) is convex and Lipschitz with constant $K \frac{\lambda}{\sqrt{N}}$. Then we use concentration of Lipschitz functions of Gaussian r.v.'s and of convex Lipschitz functions of bounded r.v.'s (since the coordinates $x^*_i$ are bounded).   
\end{proofof}

\vspace{.5cm}
\begin{proofof}{Proposition~\ref{convergence_in_probability}} 
For $\epsilon,\epsilon'>0,~ t\in [0,1)$, we can write the decomposition
\begin{align*}
\E\left\langle \indi\left\{ |R_{1,*}-q^*(\lambda)| \ge \epsilon \right\} \right\rangle_t  &= \sum_{l \ge 0} \E\left\langle \indi \big\{ R_{1,*}-q^*-\epsilon \in [l\epsilon',(l+1)\epsilon') \big\}\right\rangle_t\\
& ~+\sum_{l \ge 0} \E\left\langle \indi \big\{ -R_{1,*}+q^*-\epsilon \in [l\epsilon',(l+1)\epsilon') \big\}\right\rangle_t,
\end{align*}
where the integer index $l$ ranges over a finite set of size $ \le K/\epsilon'$ since the prior $P_{\x}$ has bounded support. We will only treat the first sum in the above expression since the argument extends trivially to the second sum. Let $A = \big\{ R_{1,*}-q^*-\epsilon \in [l\epsilon',(l+1)\epsilon') \big\}$ and write
\begin{equation}\label{indicator_decomposition_2}
\E\left\langle \indi(A)\right\rangle_t = \E \left[\frac{\int_A e^{-H_t(\vct{x})}  \rmd P_{\x}^{\otimes N}(\vct{x}) }{\int e^{-H_t(\vct{x})}  \rmd P_{\x}^{\otimes N}(\vct{x})}\right].
\end{equation}
In virtue of Lemma~\ref{sub_gaussian_concentration} the two quantities in the above fraction enjoy sub-Gaussian concentration in logarithmic scale. For any given $l$ and $u \ge 0$, we simultaneously have
\begin{align*}
\frac{1}{N}\log\int e^{-H_t(\vct{x})}  \rmd P_{\x}^{\otimes N}(\vct{x}) &\ge \frac{1}{N}\E \log\int e^{-H_t(\vct{x})}  \rmd P_{\x}^{\otimes N}(\vct{x}) - u\\
&\ge  \phi_{\RS}(\lambda;t) - \frac{K t \lambda}{N} - u,
\end{align*}
(the last inequality come from~\eqref{lower_bound_interpolated}), and
\begin{align*}
\frac{1}{N}\log \int_A e^{-H_t(\vct{x})}  \rmd P_{\x}^{\otimes N}(\vct{x}) 
&\le \frac{1}{N}\E\log \int_A e^{-H_t(\vct{x})}  \rmd P_{\x}^{\otimes N}(\vct{x}) + u\\
&= \Phi_{\epsilon'}(q^* + \epsilon + l\epsilon' ;t)+u,
\end{align*}
with probability at least $1-4e^{-Nu^2/K}$. On the complement of this event event, we simply bound the fraction in~\eqref{indicator_decomposition_2} by 1. Combining the above bounds we have
\[\E\left\langle \indi(A) \right\rangle_t \le 4e^{-Nu^2/K} + e^{N(\delta+2u)},\]
where $\delta = \Phi_{\epsilon'}(m;t) -  \phi_{\RS}(\lambda;t) + K\lambda t/N$ with $m =q^* + \epsilon+ l\epsilon'$. We let $\epsilon'$ be a function of $\lambda$ and $\epsilon$ as dictated by Proposition~\ref{energy_deficit}, and $c>0$ such that $\delta \le -c$ for all $m$ such that $|m-q^*| \ge \epsilon$. Now we conclude by letting $u = -\delta/3$. 
Finally, if $P_{\x}$ is symmetric and $t=1$, then it suffices to consider non-negative values of $m$ in the above argument to prove the corresponding statement.  
\end{proofof}

\vspace{.5cm}
\begin{proofof}{Propopsition~\ref{energy_deficit}}
The gap we seek to prove will come from different sources, depending on the particular cases we will look at. The treatment will be split into several nested cases, depending on whether $t$ is small or not and whether $m$ positive and negative. 
\paragraph{Large $t$.} Assume $t \ge t_0$ to be determined later.  
For $m \ge 0$, Proposition~\ref{fixed_overlap_upper_bound} implies
\[\Phi_{\epsilon'}(m;t) \le \psi((1-t)\lambda q^*+t\lambda m) - \frac{t\lambda m^2}{4} +K'\epsilon'^2 + \frac{K'}{N}.\] 
Since $\psi$ is a convex function we have
\begin{align}\label{interpolation_positive_overlap}
\psi((1-t)\lambda q^*+t\lambda m) - \frac{t\lambda m^2}{4} &\le (1-t)\psi(\lambda q^*)+ t\psi(\lambda m) - \frac{t\lambda m^2}{4} \nonumber\\
&= (1-t)\psi(\lambda q^*) + tF(\lambda,m).
\end{align} 
Since $q^*(\lambda)$ is the unique maximizer of $m \mapsto F(\lambda, m)$, $|m-q^*| \ge \epsilon >0$ implies that $F(\lambda,m) \le F(\lambda,q^*) - c(\epsilon)$ for some $c(\epsilon)>0$. This in turn implies 
\begin{align*}
\Phi_{\epsilon'}(m;t) &\le (1-t)\psi(\lambda q^*) + t\big(\psi(\lambda q^*) - \frac{\lambda q^{*2}}{4} - c(\epsilon)\big) +K'\epsilon'^2 + \frac{K'}{N} \\
&= \psi(\lambda q^*) -  \frac{t\lambda q^{*2}}{4} - tc(\epsilon) +K'\epsilon'^2 + \frac{K'}{N}\\
&\le \phi_{\RS}(\lambda;t) - \frac{t_0c(\epsilon)}{2} + \frac{K'}{N},
\end{align*} 
where  we have chosen $\epsilon'$ such that $K' \epsilon'^2 < t_0c(\epsilon)/2$.
The conclusion is reached for $m \ge 0$. Now we would like to prove the same bound on $\Phi_{\epsilon'}$ for negative overlaps. 
Proposition~\ref{fixed_overlap_upper_bound} implies that for $m >0$,
\begin{equation}\label{interpolation_negative_overlap}
\Phi_{\epsilon'}(-m;t) \le \widebar{\psi}((1-t)\lambda q^*+t\lambda m, (1-t)\lambda q^*-t\lambda m) - \frac{t\lambda m^2}{4} +K' \epsilon'^2+ \frac{K'}{N}.
\end{equation}

If $\lambda < \lambda_c$ then $q^*(\lambda)=0$, and by Proposition~\ref{asymmetry_lower_bound} we have
$\widebar{\psi}(t\lambda m, -t\lambda m) - \frac{t\lambda m^2}{4} \le \psi(t \lambda m) - \frac{t\lambda m^2}{4} \le t_0 F(\lambda,m)$,
and we finish the argument as in the case of positive overlap.
Now we deal with the case $\lambda > \lambda_c$.

\begin{itemize}
\item Suppose $|m - q^*| \ge \epsilon$. We let $r = (1-t)\lambda q^*+t\lambda m$ and $\alpha = \frac{(1-t)q^*}{(1-t)q^*+tm}$.
\begin{align*}
\widebar{\psi}((1-t)\lambda q^*+t\lambda m, (1-t)\lambda q^* -t\lambda m) &= \widebar{\psi}(r, \alpha r - (1-\alpha)r)\\ 
&\le \alpha\widebar{\psi}(r, r) +(1-\alpha)\widebar{\psi}(r, -r) \\
&\le \psi(r)
\end{align*}
where the last two lines follow from Proposition~\ref{asymmetry_lower_bound}. Since $|m - q^*| \ge \epsilon$ we finish once again as in the positive overlap case, starting from line~\eqref{interpolation_positive_overlap}.
\item Suppose $|m - q^*| \le \epsilon$. Then $1-\alpha \ge t_0\frac{q^*-\epsilon}{q^*+\epsilon}$ and $|r - \lambda q^*| \le \lambda \epsilon$.  If $P_{\x}$ is asymmetric we use the bounds of Proposition~\ref{asymmetry_lower_bound}:
\begin{align*}
\widebar{\psi}((1-t)\lambda q^*+t\lambda m, (1-t)\lambda q^* -t\lambda m) &\le \alpha\widebar{\psi}(r, r) +(1-\alpha)\widebar{\psi}(r, -r) \\
&\le \psi(r) - (1-\alpha)c(r)\\
&\le \psi(r) - t_0\frac{q^*-\epsilon}{q^*+\epsilon} c(\lambda (q^*-\epsilon)).
\end{align*}
The last line follows since $r \mapsto c(r)$ is increasing. Then we finish the argument by plugging this bound in~\eqref{interpolation_negative_overlap}. 
\end{itemize}

 \paragraph{Small $t$.} Assume now that $t \le t_0$. In this situation, we draw the gap from the term $(m - \partial_s\bar{\psi}(r,\bar{r}))^2$ (so far unused) in Proposition~\ref{fixed_overlap_upper_bound}.
 The functions $\bar{\psi}(r,\cdot)$ and $\bar{\psi}(\cdot,\cdot) = \psi(\cdot)$ have bounded second derivatives so 
 \[\max~ \big\{\abs{\partial_s\bar{\psi}(r,\bar{r}) - \partial_s\bar{\psi}(r,r)} ~~, ~~ \abs{\partial_s\bar{\psi}(r,r) - \partial_s\bar{\psi}(q^*,q^*)} \big\}~~\le K\lambda t_0.\]  
 Moreover,
\[(m - q^*)^2 \le 2 (m - \partial_s\bar{\psi}(r,\bar{r}))^2 + 2(q^* - \partial_s\bar{\psi}(r,\bar{r}))^2.\]
Since $\partial_s\bar{\psi}(q^*,q^*) = q^*$ we have
\[ (m - \partial_s\bar{\psi}(r,\bar{r}))^2 \ge \half (m - q^*)^2 - K\lambda^2 t_0^2 \ge \frac{\epsilon^2}{2}  - K\lambda^2 t_0^2,\]
and here we choose $t_0$ to be accordingly small, and we finish the argument.  

Note that the assumption that $P_{\x}$ is not symmetric about the origin is used only in the case where the (negative) overlap $-m$ is close to $-q^*$. Consequently, the gap is independent of $t$ in all cases. Alternatively, without this asymmetry assumption (and when $q^*>0$), we see that there is no hope of a gap independent of $t$ since the potential $\Phi_{\epsilon'}(m;t)$ is closer and closer to being even as $t \to 1$. But we can still obtain a gap that depends on $t(1-t)$ via a strong convexity argument. 

The $s \mapsto \widebar{\psi}(r,s)$ is strongly convex on any interval, and for all $r\ge 0$. Therefore, recalling $r = (1-t)\lambda q^*+t\lambda m$ and $\alpha = \frac{(1-t)q^*}{(1-t)q^*+tm}$, there exists a constant $c>0$ depending only on $\lambda$ and $P_{\x}$ (this constant is a bound on $r$) such that
\begin{align*}
\widebar{\psi}((1-t)\lambda q^*+t\lambda m, (1-t)\lambda q^* -t\lambda m) &= \widebar{\psi}(r, \alpha r - (1-\alpha)r)\\ 
&\le \alpha\widebar{\psi}(r, r) +(1-\alpha)\widebar{\psi}(r, -r) - \frac{c}{2} \alpha(1-\alpha) (2r)^2\\
&=  \alpha\widebar{\psi}(r, r) +(1-\alpha)\widebar{\psi}(r, -r) - 2 c t(1-t) \lambda^2 q^* m\\
&\le \widebar{\psi}(r, r) - 2 c t(1-t) \lambda^2 q^*(q^*-\epsilon),
\end{align*}
where the last bounds follows from $\widebar{\psi}(r, -r) \le \widebar{\psi}(r, r)$ and $|m-q^*| \le \epsilon$ (recall that this is the only case where such an argument is needed). 
\end{proofof}

\section{The cavity method}
\label{sxn:cavity_method}
Now that we have established the convergence in probability of $R_{1,*}$ to $q^*(\lambda)$ under $\E\langle \cdot \rangle_t$ in Lemma~\ref{convergence_in_probability}, we use the cavity method to prove the convergence of the moments of the overlap. 
In its essence, the cavity method amounts to isolating one variable from the system and analyzing the influence of the rest of the variables on it. It was initially introduced as an analytic tool, alternative to the replica method, to solve certain models of spin glasses~\citep{mezard1990spin}, and has since been tremendously successful in predicting the behavior of many mean-field models.    
The underlying principle is know as the \emph{leave-one-out} method in statistics. In our setting, this principle is materialized in the form of an interpolation method (yet again) that separates the last variable from the rest. 

Our proofs of Theorems~\ref{theorem_fourth_moment} and~\ref{asymptotic_variance} are interlaced. The skeleton of the argument is as follows: 
\begin{enumerate}
\item We first prove convergence of the second moment: $\E \left\langle (R_{1,*} - q^*)^2\right\rangle_t \le \bigo(1/N + e^{-c(t)N})$.
\item We then deduce from 1.\ the convergence of the fourth moment via an inductive argument: $\E \left\langle (R_{1,*} - q^*)^4\right\rangle_t \le \bigo(1/N^2 + e^{-c(t)N})$. This finishes the proof of Theorem~\ref{theorem_fourth_moment}.
\item Using 2., we revisit our proof of 1., and refine the estimates in order to obtain the sharper result $N \cdot \E \left\langle (R_{1,*} - q^*)^2\right\rangle_t \to \Delta_{\RS}(\lambda;t)$. This finishes the proof of Theorem~\ref{asymptotic_variance}.    
\end{enumerate}
We will start by defining our interpolating Hamiltonian and state some preliminary bounds and properties. Then we will move on to the execution of the cavity computations.      

\subsection{Preliminary bounds}
In this section the parameter $t \in [0,1]$ is fixed and treated as a constant. We consider the Hamiltonian 
\begin{align*}
-H_t^{-}(\vct{x}) &:= \sum_{i < j \le N-1} -\frac{\lambda t}{2N} x_i^2x_j^2 + \sqrt{\frac{\lambda t}{N}} W_{ij}x_ix_j + \frac{\lambda t}{N}x_ix_i^*x_jx_j^*\\
&~+\sum_{i=1}^{N-1} -\frac{(1-t)r}{2} x_i^2 +  \sqrt{(1-t)r} z_{i}x_i +  (1-t)r x_ix_i^*,\\
\end{align*}
where we have striped away the contribution of the variable $x_N$ from $H_t$ (equ.\ \eqref{interpolating_hamiltonian}). This contribution is considered separately: for $t' \in [0,1]$, we let
\begin{align*}
-h_{t'}(\vct{x}) &:=  \sum_{i =1}^{N-1}-\frac{\lambda t'}{2N} x_i^2 x_N^2 + \sqrt{\frac{\lambda t'}{N}} W_{iN}x_i x_N + \frac{\lambda t'}{N} x_ix_i^* x_Nx_N^*\\
&~~~-\frac{(1-t')r}{2}x_N^2 +  \sqrt{(1-t')r}z_Nx_N +  (1-t')r x_Nx_N^*.
\end{align*}
We let $r = \lambda q^*(\lambda)$ and let our interpolation, with the time parameter $s \in [0,1]$, be 
\[H_{t,s}(\vct{x}) := H_t^{-}(\vct{x}) + h_{ts}(\vct{x}).\]
At $s=1$ we have $H_{t,s} = H_t$, and at $s=0$ the variable $x_N$ decouples from the rest of the variables. 
For an integer $n\ge 1$ and $f : (\R^{N})^{n+1} \mapsto \R$, we define 
 \begin{equation*}
 \left\langle f(\vct{x}^{(1)},\cdots,\vct{x}^{(n)},\vct{x}^*)\right\rangle_{t,s} := \frac{\int f(\vct{x}^{(1)},\cdots,\vct{x}^{(n)},\vct{x}^*) \prod_{l=1}^n e^{- H_{t,s}(\vct{x}^{(l)})} \rmd P_{\x}^{\otimes N}(\vct{x}^{(l)})}{\int \prod_{l=1}^n e^{- H_{t,s}(\vct{x}^{(l)})}  \rmd P_{\x}^{\otimes N}(\vct{x}^{(l)})},
 \end{equation*}
 similarly to~\eqref{gibbs_average_one}.
Following Talagrand's notation, we write 
\begin{align*}
R^{-}_{l,l'} = \frac{1}{N}\sum_{i=1}^{N-1} x_i^{(l)}x_i^{(l')}, \quad \mbox{and} \quad  \nu_s(f) = \E\langle f\rangle_{t,s}.
\end{align*}
In our last notation, we have only emphasized the dependence of the average on $s$; the parameter $t$ will henceforth remain fixed. 
Moreover, we write $\nu(f)$ for $\nu_1(f)$.  The following three lemmas are variants of Lemma 1.6.3, Lemma 1.6.4 and Proposition 1.8.1 respectively in~\citep{talagrand2011mean1}.  
\begin{lemma}\label{gibbs_derivative}
For all $n \ge 1$,
\begin{align*}
\frac{\rmd}{\rmd s}\nu_s(f) &= \frac{\lambda t}{2} \sum_{1\le l\neq l' \le n} \nu_s((R^{-}_{l,l'}-q) y^{(l)}y^{(l')}f) 
- \lambda t n \sum_{l=1}^n \nu_s(R^{-}_{l,n+1}-q) y^{(l)}y^{(n+1)}f) \\
&~~+ \lambda t n \sum_{l=1}^n \nu_s((R^{-}_{l,*}-q) y^{(l)}y^{*}f) 
- \lambda t n  \nu_s((R^{-}_{n+1,*}-q) y^{(n+1)}y^{*}f) \\
&~~+  \lambda t \frac{n(n+1)}{2} \nu_s((R^{-}_{n+1,n+2}-q) y^{(n+1)}y^{(n+2)}f),
\end{align*}
where we have written $y = x_N$.
\end{lemma}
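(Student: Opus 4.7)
\medskip

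The plan is to compute $\frac{d}{ds}\nu_s(f)$ directly, by differentiating under the integral sign and then applying Gaussian integration by parts to eliminate the random coefficients in the Hamiltonian. The result is essentially Talagrand's Lemma 1.6.3 adapted to the planted (Nishimori-symmetric) setting, where the asymmetry between the spike $\vct{x}^*$ and a generic replica produces the additional terms involving $y^* = x_N^*$.

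First I would write down the general Gibbs-derivative identity: since $H_{t,s}$ depends on $s$ only through $h_{ts}$, we have for any function $f$ of $n$ replicas
\[\frac{d}{ds}\nu_s(f) = \sum_{l=1}^n \nu_s\!\big(f\cdot(-\partial_s h_{ts}(\vct{x}^{(l)}))\big) - n\,\nu_s\!\big(f\cdot(-\partial_s h_{ts}(\vct{x}^{(n+1)}))\big),\]
where the first sum comes from the $n$ factors $e^{-H_{t,s}(\vct{x}^{(l)})}$ in the numerator and the second from the $n$-th power of the partition function in the denominator (realized as an independent copy $\vct{x}^{(n+1)}$ drawn from the same Gibbs measure). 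Next I would compute $-\partial_s h_{ts}$ explicitly using $\partial_s = t\,\partial_{t'}|_{t'=ts}$; it decomposes into (i) deterministic quadratic pieces of the form $-\frac{\lambda t}{2N}x_i^2 x_N^2 + \frac{tr}{2}x_N^2$, (ii) Gaussian pieces $\frac{t}{2}\sqrt{\lambda/(tsN)}\,W_{iN}x_i x_N$ and $-\frac{t}{2}\sqrt{r/(1-ts)}\,z_N x_N$, and (iii) planted pieces $\frac{\lambda t}{N}x_i x_i^* x_N x_N^* - tr\,x_N x_N^*$.

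The heart of the argument is integration by parts on the Gaussian variables $\{W_{iN}\}_{i<N}$ and $z_N$, using $\E[gF(g)] = \E[F'(g)]$. Each $W_{iN}$ inside a Gibbs bracket gets hit either by one of the $n$ factors $e^{-H_{t,s}(\vct{x}^{(l)})}$ in the Gibbs measure (yielding $\sqrt{\lambda ts/N}\,x_i^{(l)} x_N^{(l)}$), by the extra replica from $Z^{-n}$ (yielding $-\sqrt{\lambda ts/N}\,x_i^{(n+1)} x_N^{(n+1)}$), or by the function $f$ (which, since $f$ in our applications depends on $s$ only trivially, can usually be ignored, or handled by an additional ``cavity'' replica when needed). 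The crucial cancellation is that the $\frac{1}{2\sqrt{s}}$ coming from the differentiation combines with the $\sqrt{s}$ produced by IBP to give a clean factor of $\frac{\lambda t}{2N}$, and these IBP contributions together with the deterministic quadratic term $-\frac{\lambda t}{2N}x_i^2 x_N^2$ assemble into $\frac{\lambda t}{N}\sum_{i<N} x_i^{(l)} x_i^{(l')}\cdot x_N^{(l)} x_N^{(l')} = \lambda t\, R^{-}_{l,l'}\,y^{(l)}y^{(l')}$ for each pair of replicas. The analogous computation on $z_N$, combined with the channel term $\frac{tr}{2}x_N^2 - tr\,x_Nx_N^*$, contributes the constant $-q$ shift (recall $r = \lambda q^*$), producing the combinations $R^{-}_{l,l'} - q$ stated. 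The planted deterministic piece $\frac{\lambda t}{N}x_ix_i^* x_Nx_N^*$ produces the $R^{-}_{l,*}$ terms, whose coefficient $q$ is supplied from the $-tr\,x_Nx_N^*$ piece in the same way.

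Finally I would count combinatorics. From the first (numerator) sum, IBP pairs two replicas among $\{1,\ldots,n\}$: if $l\neq l'$ we get the unordered pair contribution $\frac{\lambda t}{2}\sum_{l\neq l'}\nu_s((R^{-}_{l,l'}-q)y^{(l)}y^{(l')}f)$; the planted contraction of any $\vct{x}^{(l)}$ with $\vct{x}^*$ gives $+\lambda t n \sum_{l=1}^n \nu_s((R^{-}_{l,*}-q)y^{(l)}y^{*}f)$ after the usual replica relabelling, while contraction with the extra replica yields $-\lambda tn \sum_{l=1}^n \nu_s((R^{-}_{l,n+1}-q)y^{(l)}y^{(n+1)}f)$; from the denominator's $-n\,\nu_s(\cdots(\vct{x}^{(n+1)}))$ term one gets $-\lambda tn\,\nu_s((R^{-}_{n+1,*}-q)y^{(n+1)}y^*f)$ and $+\lambda t\frac{n(n+1)}{2}\nu_s((R^{-}_{n+1,n+2}-q)y^{(n+1)}y^{(n+2)}f)$ (the coefficient $\frac{n(n+1)}{2}$ arising from $n^2$ IBP cross-terms minus $n$ self-terms). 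The main obstacle is purely one of bookkeeping: keeping track of signs and of which pairs of replicas arise from which IBP contraction, and verifying that the deterministic quadratic/channel pieces assemble exactly into the $-q$ shifts, so that only the fluctuations $R^{-}_{\cdot,\cdot}-q$ survive. Apart from the planted terms (which have no analogue in the purely disordered SK model), the derivation is formally identical to Lemma 1.6.3 of Talagrand, and no new difficulty beyond careful combinatorics appears.
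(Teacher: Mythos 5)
Your proposal is correct and takes exactly the route the paper does: the paper's proof of this lemma is a one-line appeal to Gaussian integration by parts via Talagrand's Lemma 1.6.3, and your write-up supplies precisely the details that reference contains (differentiation of the Gibbs average in $s$, IBP on $W_{iN}$ and $z_N$, cancellation of the self-contractions against the deterministic quadratic terms, and the $-q$ shifts coming from the channel terms $\frac{tr}{2}x_N^2 - tr\,x_Nx_N^*$ with $r=\lambda q$). The only place you wave hands is the coefficient of the planted sum, where a direct count of the numerator contractions of $\vct{x}^{(l)}$ with $\vct{x}^*$ gives prefactor $\lambda t$ rather than $\lambda t n$ (the two are indistinguishable in the $n=1$ instance the paper actually displays), so if you carry out the bookkeeping you should verify that coefficient rather than import it from the statement.
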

\begin{proof}
The computation relies on Gaussian integration by parts. See~\cite[Lemma 1.6.3]{talagrand2011mean1} for the details of a similar computation.  
\end{proof}

\begin{lemma}\label{bound_gibbs_derivative}
If $f$ is a bounded non-negative function, then for all $s \in [0,1]$,
\[\nu_s(f) \le K(\lambda ,n) \nu(f).\]
\end{lemma}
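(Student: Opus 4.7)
The plan is to apply the derivative formula from Lemma~\ref{gibbs_derivative} and turn it into a Grönwall-type differential inequality for the scalar function $s \mapsto \nu_s(f)$. Since the prior $P_{\x}$ has bounded support, every occurrence of $y^{(l)} = x_N^{(l)}$ is bounded by a constant depending only on $P_{\x}$, and every overlap $R^{-}_{l,l'}$ is likewise bounded, so each of the factors of the form $(R^{-}_{l,l'} - q)\, y^{(l)} y^{(l')}$ appearing in Lemma~\ref{gibbs_derivative} is uniformly bounded by some constant $C(\lambda, n, P_{\x})$. Because $f \ge 0$ and the Gibbs measure is a probability measure, this immediately gives the pointwise bound
\begin{equation*}
\bigl|\nu_s\bigl((R^{-}_{l,l'} - q)\, y^{(l)} y^{(l')} f\bigr)\bigr| \;\le\; C(\lambda, n, P_{\x})\, \nu_s(f).
\end{equation*}

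Summing the finitely many terms in the formula from Lemma~\ref{gibbs_derivative} yields a constant $K = K(\lambda, n)$ such that
\begin{equation*}
\Bigl|\frac{\rmd}{\rmd s} \nu_s(f)\Bigr| \;\le\; K\, \nu_s(f), \qquad s \in [0,1].
\end{equation*}
In particular $\frac{\rmd}{\rmd s} \nu_s(f) \ge -K \nu_s(f)$, so the map $s \mapsto \nu_s(f)\, e^{Ks}$ has non-negative derivative and is therefore non-decreasing on $[0,1]$. Evaluating at $s$ and $s=1$ gives $\nu_s(f) e^{Ks} \le \nu_1(f) e^K$, hence
\begin{equation*}
\nu_s(f) \;\le\; e^{K(1-s)}\, \nu(f) \;\le\; e^K\, \nu(f),
\end{equation*}
which is the desired inequality (after absorbing $e^K$ into $K(\lambda, n)$).

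There is no real obstacle here: the entire argument hinges on the boundedness of $P_{\x}$, which ensures that the multiplicative factors appearing in the cavity derivative are uniformly bounded, and then Grönwall does the rest. The only care needed is in bookkeeping the combinatorial constants from Lemma~\ref{gibbs_derivative} (they depend on $n$, which is why $K$ is allowed to depend on $n$) and in noting that $f \ge 0$ is what lets us convert absolute-value bounds on $\nu_s(gf)$ into multiplicative bounds by $\nu_s(f)$ without an additional Cauchy--Schwarz step.
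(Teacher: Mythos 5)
Your proof is correct and follows essentially the same route as the paper: bound each term of the cavity derivative from Lemma~\ref{gibbs_derivative} using boundedness of the support of $P_{\x}$ and non-negativity of $f$ to get $|\nu_s'(f)| \le K\nu_s(f)$, then apply Gr\"onwall. Your explicit treatment of the backward direction (integrating from $s$ to $1$ via monotonicity of $s \mapsto \nu_s(f)e^{Ks}$) is a detail the paper leaves implicit, but it is exactly the intended argument.
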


\begin{proof}
Since the variables and the overlaps are all bounded, and $t \le 1$, using Lemma~\ref{gibbs_derivative} we have for all $s\in [0,1]$
\[|\nu_s'(f)| \le K(\lambda ,n) \nu_s(f).\] 
Then we conclude using Gr\"onwall's lemma. 
\end{proof}

\begin{lemma}\label{first_second_order_estimates}
For all $s \in [0,1]$, and all $\tau_1,\tau_2 >0$ such that $1/\tau_1+1/\tau_2=1$,
\begin{align}
\abs{\nu_s(f) - \nu_0(f)} &\le K(\lambda,n) \nu\left(\abs{R^-_{1,*} -q}^{\tau_1}\right)^{1/\tau_1} \cdot \nu\left(|f|^{\tau_2}\right)^{1/\tau_2} \label{first_order_estimate}\\
\abs{\nu_s(f) -\nu_0(f) - \nu_0'(f)} &\le K(\lambda,n) \nu\left(\abs{R^-_{1,*} -q}^{\tau_1}\right)^{1/\tau_1} \cdot \nu\left(|f|^{\tau_2}\right)^{1/\tau_2}.\label{second_order_estimate}
\end{align}
\end{lemma}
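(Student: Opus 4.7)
}
The plan is to apply Lemma~\ref{gibbs_derivative} to obtain an explicit expression for $\nu_s'(f)$, bound each summand in that expression via Hölder's inequality, and then reduce the resulting $\nu_s$-averages on the right-hand side back to $\nu$-averages using Lemma~\ref{bound_gibbs_derivative}. The first inequality follows by integration of $\nu_s'$ from $0$ to $s$; the second by a Taylor expansion to second order combined with the same technique applied to $\nu_s''(f)$.

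More concretely, first I would note that each term produced by Lemma~\ref{gibbs_derivative} has the schematic shape
\[
\lambda t \cdot \nu_s\bigl((R^-_{\bullet,\bullet}-q)\, y^{(\ell)} y^{(\ell')} f\bigr),
\]
where the $y$'s equal $x_N^{(\cdot)}$ and are therefore uniformly bounded by a constant depending only on the diameter of $\operatorname{supp}(P_{\x})$. Dropping these bounded $y$-factors into the constant, I would apply Hölder's inequality under $\nu_s$ with exponents $\tau_1,\tau_2$:
\[
\bigl|\nu_s\bigl((R^-_{\bullet,\bullet}-q) y^{(\ell)} y^{(\ell')} f\bigr)\bigr|
\le K\,\nu_s\bigl(|R^-_{\bullet,\bullet}-q|^{\tau_1}\bigr)^{1/\tau_1}
\nu_s\bigl(|f|^{\tau_2}\bigr)^{1/\tau_2}.
\]
The key observation is that $R^-_{\bullet,\bullet}$ appearing here is an overlap either between two replicas, between a replica and $\vct{x}^*$, or between two ``new'' replicas $\vct{x}^{(n+1)},\vct{x}^{(n+2)}$. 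Under $\nu$ the Nishimori property guarantees that all these overlaps have the same law as $R^-_{1,*}-q$, so the resulting expression can be uniformly rewritten in terms of $\nu(|R^-_{1,*}-q|^{\tau_1})^{1/\tau_1}$. Finally, Lemma~\ref{bound_gibbs_derivative} lets me swap $\nu_s$ for $\nu$ on the right-hand side at the cost of a constant $K(\lambda,n)$. Integrating the resulting bound on $|\nu_s'(f)|$ over $s\in[0,1]$ yields the first inequality.

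For the second inequality, I would write
\[
\nu_s(f)-\nu_0(f)-s\nu_0'(f)=\int_0^s(s-u)\,\nu_u''(f)\,\rmd u,
\]
and differentiate the expression for $\nu_s'(f)$ from Lemma~\ref{gibbs_derivative} once more, again via Gaussian integration by parts. Each term in $\nu_u''(f)$ then involves a \emph{product} of two overlap-centering factors $(R^-_{\bullet,\bullet}-q)$ times bounded $y$'s times $f$. The same strategy—Hölder, Nishimori, and Lemma~\ref{bound_gibbs_derivative}—produces a bound of the form $K(\lambda,n)\,\nu(|R^-_{1,*}-q|^{\tau_1})^{1/\tau_1}\nu(|f|^{\tau_2})^{1/\tau_2}$, where one of the two overlap factors has been bounded simply by a constant (since overlaps are uniformly bounded). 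Provided one reads ``$\nu_0'(f)$'' in the statement as ``$s\nu_0'(f)$'' (i.e.\ the linear Taylor term), this gives the claim.

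\paragraph{Main obstacle.} The only subtle point is the reduction to a single canonical overlap $R^-_{1,*}-q$: the terms produced by Lemma~\ref{gibbs_derivative} involve overlaps between pairs of replicas (including the freshly introduced $n+1,\,n+2$ replicas) rather than directly between a replica and $\vct{x}^*$. Invoking the Nishimori property under $\E\langle\cdot\rangle$ is what makes this identification possible, but strictly speaking Nishimori applies to $\nu$ and not to $\nu_s$ for intermediate $s$; this is precisely why we first upper-bound by $\nu_s$ and only then pass to $\nu$ via Lemma~\ref{bound_gibbs_derivative} before invoking the replica/planted symmetry. Handling this ordering carefully is the main technical care required; everything else is routine differentiation, Hölder, and Grönwall-type bookkeeping.
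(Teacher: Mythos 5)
Your proposal is correct and follows essentially the same route as the paper: Taylor's theorem, the explicit derivative formula of Lemma~\ref{gibbs_derivative} (applied recursively for the second derivative), H\"older's inequality on each term, and Lemma~\ref{bound_gibbs_derivative} to replace $\nu_s$ by $\nu$, with the Nishimori/replica symmetry under $\nu$ identifying all the overlaps with $R^-_{1,*}$. Your caveat about reading $\nu_0'(f)$ as the linear Taylor term $s\nu_0'(f)$ is well taken (the paper implicitly uses the lemma only at $s=1$, where the two readings coincide), and your ordering of H\"older before the passage from $\nu_s$ to $\nu$ is exactly the right way to handle the fact that Nishimori is only available at $s=1$.
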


\begin{proof}
We use Taylor's approximations
\begin{align*}
\abs{\nu_s(f) - \nu_0(f)} &\le \sup_{0 \le s \le 1} \abs{\nu'_s(f)},\\
\abs{\nu_s(f) -\nu_0(f) - \nu_0'(f)} &\le \sup_{0 \le s \le 1} \abs{\nu''_s(f)},
\end{align*}
then Lemma~\ref{gibbs_derivative} and the triangle inequality to bound the right hand sides, then H\"older's inequality to bound each term in the derivative, and then we apply Lemma~\ref{bound_gibbs_derivative}. (To compute the second derivative, one need to use Lemma~\ref{gibbs_derivative} recursively.)  
\end{proof}

\subsection{The cavity matrix} 
Recall the parameters $a(0)$, $a(1)$ and $a(2)$ from~\eqref{4_replica_overlaps}: 
\begin{align*}
a(0) = \E \left[\langle x^2\rangle_r^2\right] - q^{*2}(\lambda), \quad
a(1) = \E \left[\langle x^2 \rangle_r \langle x \rangle_r^2\right]  - q^{*2}(\lambda), \quad
a(2) = \E \left[\langle x\rangle_r^4\right]  - q^{*2}(\lambda),
\end{align*} 
where $r = \lambda q^*(\lambda)$. Now let
\begin{align}\label{cavity_matrix_2}
\mtx{A} := \lambda \cdot
\begin{bmatrix}
a(0) & -2a(1) & a(2)\\
a(1) & a(0) -a(1) -2a(2) & -2a(1) + 3a(2)\\
a(2) & 4a(1) - 6a(2) & a(0)-6a(1)+6a(2)
\end{bmatrix}\cdot
\end{align}
One can easily check that the transpose of this matrix has two eigenvalues $\mu_1$ and $\mu_2$ with expressions
\begin{gather}\label{eigenvalues_2}
\begin{aligned}
\mu_1(\lambda) &= \lambda (a(0) -2a(1)+a(2)),\\
\mu_2(\lambda) &= \lambda (a(0) -3a(1)+2a(2)),
\end{aligned}
\end{gather}
and associated eigenvectors $(1,-2,1)$ and $(2,-3,2)$, and of multiplicities two and one respectively. (The first eigenvalue appears in a $2\times 2$ Jordan block.) We will need to control the largest eigenvalue of $\mtx{A}^\top$. This matrix is the ``planted'' analogue of the one displayed in~\citep[equ.\ (1.234)]{talagrand2011mean1} for the SK model. 
 By Cauchy-Schwarz, $\mu_1-\mu_2 = \lambda (a(1)-a(2)) \ge 0$. As will be clear from the next subsection, the cavity computations we are about present are only informative when $\mu_1 <1$. Interestingly, this is true for all values of $\lambda$ where the $\RS$ formula $\phi_{\RS}$ has two derivatives:
\begin{lemma}\label{latala_guerra}
For all $\lambda \in \A$, $\mu_1(\lambda) < 1$.
\end{lemma}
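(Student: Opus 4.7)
My plan is to reduce Lemma~\ref{latala_guerra} to the identity
\[
\mu_1(\lambda) \;=\; 2\lambda\,\psi''(\lambda q^*(\lambda)),
\]
after which the strict inequality $\mu_1<1$ will follow from the second-order maximality condition for the variational problem~\eqref{RS_formula}, with twice-differentiability of $\phi_{\RS}$ ruling out the boundary case $\mu_1=1$.

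First I would establish the identity. Starting from~\eqref{log_partition_scalar_gaussian_channel}, Gaussian integration by parts in $z$ together with the Nishimori identity $\E\langle xx^*\rangle_r = \E\langle x\rangle_r^2$ (cf.\ Section~\ref{sxn:convergence_overlaps}) gives $\psi'(r) = \tfrac{1}{2}\E\langle x\rangle_r^2$. Differentiating once more---either directly via a second round of Gaussian IBP and Nishimori, or equivalently via the Guo--Shamai--Verd\'u derivative-of-MMSE formula for the scalar Gaussian channel---yields
\[
\psi''(r) \;=\; \tfrac{1}{2}\,\E\!\left[(\langle x^2\rangle_r - \langle x\rangle_r^2)^2\right].
\]
Expanding the square and reading off $a(0),a(1),a(2)$ from~\eqref{4_replica_overlaps} then gives $\psi''(r)=\tfrac{1}{2}(a(0)-2a(1)+a(2))$; the identity follows on setting $r=\lambda q^*(\lambda)$ and comparing to~\eqref{eigenvalues}.

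Next I would split into two cases. If $q^*(\lambda)=0$ (which forces $P_{\x}$ to be centered and, since $\lambda\in\A$, $\lambda<\lambda_c$), then $\langle x\rangle_0 = 0$ and $\langle x^2\rangle_0 = \E_{P_{\x}}[X^2]$ are deterministic, so $\mu_1 = \lambda(\E_{P_{\x}}[X^2])^2 < \lambda_c\,(\E_{P_{\x}}[X^2])^2 \le 1$ by Lemma~\ref{spectral_bound}. If instead $q^*(\lambda)>0$, I would use Danskin's theorem applied to~\eqref{RS_formula} to obtain $\phi'_{\RS}(\lambda) = q^*(\lambda)^2/4$ on $\D$; combined with $q^*>0$ and $\lambda\in\A$, this transfers twice-differentiability of $\phi_{\RS}$ into differentiability of $q^*$ at $\lambda$ with a finite derivative. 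Differencing the fixed-point equation $q^* = 2\psi'(\lambda q^*)$ along a sequence in $\D$ tending to $\lambda$ and passing to the limit produces
\[
q^{*'}(\lambda)\,\bigl(1-\mu_1(\lambda)\bigr) \;=\; 2\,q^*(\lambda)\,\psi''(\lambda q^*(\lambda)).
\]
The right-hand side is strictly positive because $q^*>0$ and $\psi''>0$ on $\R_+$ for any non-degenerate prior. On the left, monotonicity of $q^*$~\citep{lelarge2016fundamental} gives $q^{*'}\ge 0$, while the second-order necessary condition $\partial_q^2 F(\lambda,q^*)\le 0$ at a maximum translates into $\mu_1\le 1$. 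Equality $\mu_1=1$ would force $0=\text{LHS}=\text{RHS}>0$, a contradiction, so $\mu_1<1$.

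The step I expect to be most delicate is the identity $\psi''(r) = \tfrac{1}{2}\E[(\langle x^2\rangle_r - \langle x\rangle_r^2)^2]$. A bare Gaussian-IBP derivation generates several three-replica cross-terms that must be painstakingly reassembled via Nishimori, so it is considerably cleaner to invoke $\MMSE'(r) = -\E[(\langle x^2\rangle_r - \langle x\rangle_r^2)^2]$ (Guo--Shamai--Verd\'u) together with $q(r) = \E_{P_{\x}}[X^2] - \MMSE(r)$ to bypass the direct computation. A secondary but minor issue is the justification of the implicit differentiation, which requires combining convexity of $\phi_{\RS}$, the envelope identity $\phi'_{\RS}=q^{*2}/4$, continuity of $q^*$ on $\D$, and the strict positivity $q^*(\lambda)>0$ to lift twice-differentiability of $\phi_{\RS}$ to differentiability of $q^*$.
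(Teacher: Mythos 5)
Your proposal is correct and follows essentially the same route as the paper's proof: the identity $\mu_1=2\lambda\psi''(\lambda q^*)$ (the paper writes $\psi''$ in three-replica form via Gaussian IBP and Nishimori, which is equivalent to your variance formulation), the case $q^*=0$ handled via Lemma~\ref{spectral_bound}, the second-order optimality condition $\partial_q^2 F(\lambda,q^*)\le 0$ giving $\mu_1\le 1$, and the same implicit differentiation of the stationarity equation $\psi'(\lambda q^*)=q^*/2$ (using $\phi_{\RS}'=q^{*2}/4$ to lift twice-differentiability of $\phi_{\RS}$ to differentiability of $q^*$) to exclude $\mu_1=1$. No substantive differences.
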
 
\begin{proof}
First, if $\lambda < \lambda_c$, then $q^*(\lambda) = 0$, and $\mu_1(\lambda)= \lambda (\E_{P_{\x}}[X^2])^2$. By Lemma~\ref{spectral_bound}, $\mu_1(\lambda)<1$. Now we assume $\lambda \in \A  \cap (\lambda_c,+\infty)$. Recall 
\[\psi(r) = \E_{x^*,z} \log \int \exp\left(\sqrt{r}zx + r xx^* - \frac{r}{2} x^2\right) \rmd P_{\x}(x),\]
and the $\RS$ potential
\[F(\lambda,q) = \psi(\lambda q) - \frac{\lambda q^2}{4}.\]
 It is a straightforward exercise to compute the first and the second derivatives of $\psi$ using Gaussian integration by parts and the Nishimori property:
\[\psi'(r) = \half \E \langle xx^*\rangle_r, \]
\[\psi''(r) = \half \left(\E\langle x^2x^{*2}\rangle_r -2\E\langle {x^{(1)}}^2x^{(2)}x^{*}\rangle_r + \E\langle x^{(1)}x^{(2)}x^{(3)}x^*\rangle_r \right).\]
With the choice $r = \lambda q^*(\lambda)$, we see that $\mu_1(\lambda) = 2\lambda\psi''(r)$.   
Now we observe that 
\[\frac{\partial^2 F}{\partial q^2}(\lambda,q) = \frac{\lambda}{2}(2\lambda\psi''(\lambda q)-1).\]%
Since $q^*(\lambda)$ is a maximizer of the smooth function $F(\lambda,\cdot)$, and lies in the interior of its domain ($q^*(\lambda) >0$ for $\lambda > \lambda_c$), then it must be a first-order stationary point: $\frac{\partial F}{\partial q}(\lambda,q^*) = 0$. Hence $\frac{\partial^2 F}{\partial q^2}(\lambda,q^*) \le 0$, i.e., $\mu_1(\lambda) \le 1$ for all $\lambda > \lambda_c$. Now we claim that the inequality must be strict for $\lambda \in \A$. Indeed, \cite[Proposition 15]{lelarge2016fundamental} show that whenever $\phi_{\RS}$ is differentiable at $\lambda$, then the maximizer of $F(\lambda,\cdot)$ is unique and
\[\phi_{\RS}'(\lambda) = \frac{q^{*2}(\lambda)}{4}.\]  
Therefore, \emph{twice} differentiability of $\phi_{\RS}$ implies first differentiability of $\lambda \mapsto q^*(\lambda)$ whenever $q^*(\lambda) >0$ (i.e., $\lambda > \lambda_c$). Now we take advantage of first-order optimality: $\frac{\partial F}{\partial q}(\lambda,q^*)  = 0$ is the same as
\[\psi'(\lambda q^*(\lambda)) = \frac{q^*(\lambda)}{2}.\] 
The above can be seen as an equality of functions (of $\lambda$) defined almost everywhere. Taking one derivative yields
\[q^*(\lambda)\psi''(\lambda q^*(\lambda)) = \half \big(1-2\lambda \psi''(\lambda q^*(\lambda)) \big) q^{*'}(\lambda). \] 
Since $q^*(\lambda)$ and $\psi''(\lambda q^*(\lambda))$ are both positive, the right-hand side cannot vanish. This concludes the proof. 
\end{proof}

\subsection{Cavity computations for the second moment}
In this subsection we prove the convergence of the second moment of the overlaps: 
\[\nu((R_{1,*} - q^*)^2) \le \frac{K}{N} + Ke^{-c(t)N},\]
with $c(t) \sim c_0 (1-t)^2$ as $t \to 1$ when $\lambda > \lambda_c$ and $P_{\x}$ is symmetric about the origin, and uniformly lower-bounded by a positive constant otherwise.
To lighten the notation in the calculations to come, $q^*(\lambda)$ will be denoted simply by $q$, and we recall the notation $\nu (\cdot) = \E\langle \cdot \rangle_{t,1}$. 
Let 
\begin{align*}
A = \nu \left((R_{1,*}-q)^2\right),~~~
B = \nu \left((R_{1,*}-q)(R_{2,*}-q)\right),~~~
C = \nu \left((R_{1,*}-q)(R_{2,3}-q)\right).
\end{align*}
By symmetry between sites, 
\[A = \nu \left((R_{1,*}-q)(x_Nx_N^*-q)\right) = \frac{1}{N}\nu\left( x_Nx_N^*(x_Nx_N^*-q)\right) + \nu ( (R^-_{1,*}-q)(x_Nx_N^*-q)).\]
By the first bound~\eqref{first_order_estimate} of Lemma~\ref{first_second_order_estimates} with $\tau_1 = 1$, $\tau_2 = \infty$, we get
\[\nu( x_Nx_N^*(x_Nx_N^*-q)) =  \nu_0(x_Nx_N^*(x_Nx_N^*-q)) + \delta = a(0) + \delta,\]
with $|\delta| \le K(\lambda)\nu(|R^-_{1,*}-q|)$.
On the other hand, by the second bound~\eqref{second_order_estimate} with $\tau_1 = 1$, $\tau_2 = \infty$, we get
\[\nu ((R^-_{1,*}-q)(x_Nx_N^*-q)) = \nu_0'((R^-_{1,*}-q)(x_Nx_N^*-q)) + \delta. \]
This is because $\nu_0((R^-_{1,*}-q)(x_Nx_N^*-q))=0$, since last variable $x_N$ decouples from the remaining $N-1$ variables under the measure $\nu_0$. Now, we use Lemma~\ref{gibbs_derivative} with $n=1$, to evaluate the above derivative at $t=0$. We still write $y^{(l)} = x_N^{(l)}$.
\begin{align*}
\nu_0'((R^-_{1,*}-q)(x_Nx_N^*-q)) &= -\lambda t \nu_0(y^{(1)}y^{(2)}(y^{(1)}y^* - q)(R^-_{1,*}-q)(R^-_{1,2}-q)) \\
&~~+ \lambda t \nu_0(y^{(1)}y^{*}(y^{(1)}y^{*} - q)(R^-_{1,*}-q)^2)\\
&~~ - \lambda t \nu_0(y^{(2)}y^{*}(y^{(1)}y^* - q)(R^-_{1,*}-q)(R^-_{2,*}-q)) \\
&~~+ \lambda t \nu_0(y^{(2)}y^{(3)}(y^{(1)}y^* - q)(R^-_{1,*}-q)(R^-_{2,3}-q)).
\end{align*}
We extract the average on the $y$-variables from the rest of the expression as pre-factors, so that the above is equal to
\begin{align*}
&-\lambda t a(1) \nu_0((R^-_{1,*}-q)(R^-_{1,2}-q))
+ \lambda t a(0) \nu_0((R^-_{1,*}-q)^2)\\
&- \lambda t a(1)\nu_0((R^-_{1,*}-q)(R^-_{2,*}-q))
+ \lambda t a(2)\nu_0((R^-_{1,*}-q)(R^-_{2,3}-q)).
\end{align*}
We notice that by the Nishimori property that \[\nu_0((R^-_{1,*}-q)(R^-_{1,2}-q)) = \nu_0((R^-_{1,*}-q)(R^-_{2,*}-q)).\]
Now we observe that $\nu_0'((R^-_{1,*}-q)(x_Nx_N^*-q))$ is a linear combination of terms that resemble, but are not quite equal to $A$, $B$ and $C$. We are nevertheless tempted to make the substitution since we expect them to be close. We use Lemma~\ref{first_second_order_estimates} to justify this. Taking $\nu_0((R^-_{1,*}-q)^2)$ as an example, we apply the estimate~\eqref{first_order_estimate} with $t=1$, $\tau_1 = 3$ and $\tau_2 = 3/2$. We get
\[\nu_0((R^-_{1,*}-q)^2) = \nu((R^-_{1,*}-q)^2) + \delta\]
with $|\delta| \le K(\lambda)\nu(|R^-_{1,*}-q|^3)$. Moreover,
\[\nu((R^-_{1,*}-q)^2) = \nu((R_{1,*}-\frac{1}{N}yy^*-q)^2) = \nu((R_{1,*}-q)^2) -\frac{2}{N}\nu(yy^*(R_{1,*}-q)) + \frac{1}{N^2}\nu({y}^2{y^*}^2).\]
The third term is of order $1/N^2$, and the second term is bounded by $\frac{1}{N}\nu_0(\abs{R_{1,*}-q})$. Therefore
\[\nu_0((R^-_{1,*}-q)^2) = \nu((R_{1,*}-q)^2) + \delta',\] 
with \[|\delta'| \le K(\lambda)\left(\frac{1}{N}\nu(|R^-_{1,*}-q|)+\nu(|R^-_{1,*}-q|^3) + \frac{1}{N^2}\right).\]
This argument applies equally to the remaining terms $\nu_0((R^-_{1,*}-q)(R^-_{2,*}-q))$ and $\nu_0((R^-_{1,*}-q)(R^-_{2,3}-q))$. We then end up with the identity
\begin{equation}\label{A_eqn}
A = \frac{a(0)}{N} + \lambda' a(0)A  -2\lambda' a(1)B + \lambda' a(2) C + \delta(0),
\end{equation}
where $\lambda' = t \lambda$, and $|\delta(0)|$ is bounded by the same quantity as $|\delta'|$.

Next, we apply the same reasoning to $B$ and $C$ as well, (e.g., Lemma~\ref{gibbs_derivative} needs to applied with $n=2$ for $B$ and $n=3$ for $C$) we get
\begin{align}
B &= \frac{a(1)}{N} + \lambda' a(1) A + \lambda' (a(0)-a(1)-2a(2))B +\lambda' (-2a(1)+3a(2))C + \delta(1),\label{B_eqn}\\  
C &= \frac{a(2)}{N} + \lambda' a(2)A + \lambda' (4a(1)-6a(2))B + \lambda' (a(0)-6a(1)+6a(2))C + \delta(2),\label{C_eqn}
\end{align}
where for $i=0,1,2$, 
\begin{equation}\label{delta_error}
|\delta(i)| \le  K(\lambda)\left(\frac{1}{N}\nu(|R^-_{1,*}-q|)+\nu(|R^-_{1,*}-q|^3) + \frac{1}{N^2}\right).
\end{equation}
We have ended up with a linear system in the quantities $A$, $B$ and $C$. Let $\vct{z} = [A,B,C]^\top$ and $\vct{\delta} = [\delta(0),\delta(1),\delta(2)]^\top$. Then the equations~\eqref{A_eqn}, \eqref{B_eqn} and \eqref{C_eqn} can be written as 
\begin{equation}\label{cavity_linear_system}
\vct{z} = \frac{1}{N}\vct{a}+t\mtx{A}\vct{z} + \vct{\delta},
\end{equation}
where $\vct{a} = [a(0),a(1),a(2)]^\top$, and the matrix $\mtx{A}$ are defined in~\eqref{cavity_matrix_2}.
The above system implies useful bounds on the coefficients of the vector $\vct{z}$ only if the largest eigenvalue of the matrix $t\mtx{A}$ is smaller than 1. This is insured by Lemma~\ref{latala_guerra} when $\lambda \in \A$ (independently of $t$). Now we can invert the linear system and extract $\vct{z}$:
\begin{equation}\label{cavity_linear_system_2}
\vct{z} = \frac{1}{N}(\mtx{I} - t\mtx{A})^{-1}\vct{a} +  (\mtx{I} - t\mtx{A})^{-1}\vct{\delta}.
\end{equation}
Now we need to control the entries of $\vct{\delta}$. By elementary manipulations,
\[\nu(|R_{1,*}^- - q|) \le \nu(|R_{1,*} - q|) + \frac{K}{N},\] 
and
\[\nu(|R_{1,*}^- - q|^3) \le \nu(|R_{1,*} - q|^3) + \frac{K}{N}\nu((R_{1,*} - q)^2) + \frac{K}{N^2}\nu(|R_{1,*} - q|) + \frac{K}{N^3}.\] 
Therefore, from~\eqref{delta_error} we have for all $i=0,1,2$,
\begin{equation}\label{delta_error_2}
|\delta(i)| \le K \left( \nu(|R_{1,*} - q|^3) +  \frac{1}{N}\nu((R_{1,*} - q)^2) + \frac{1}{N}\nu(|R_{1,*} - q|) +\frac{1}{N^2} \right).
\end{equation}
Now we will argue that $\nu(|R_{1,*} - q|) \ll 1$ and $\nu(|R_{1,*} - q|^3) \ll \nu((R_{1,*} - q)^2)$. With Lemma~\ref{convergence_in_probability} we have for $\epsilon>0$
\[\nu(|R_{1,*} - q|)  \le \epsilon + K(\epsilon) e^{-cN},\]
and
\[\nu( |R_{1,*} - q|^3)  \le \epsilon \nu( (R_{1,*} - q)^2) + K(\epsilon) e^{-cN}.\]
Combining the above two bounds with~\eqref{delta_error_2}, and then injecting in~\eqref{cavity_linear_system_2}, we get
\begin{align*}
 \nu((R_{1,*} - q)^2)  = z(0) &\le \twonorm{z} \le \twonorm{\frac{1}{N}(\mtx{I} - t\mtx{A})^{-1}\vct{a}} + \left\|(\mtx{I} - t\mtx{A})^{-1}\right\|_{\text{op}} \twonorm{\vct{\delta}}\\
  &\le \frac{\twonorm{\vct{c}}}{N} + K(\epsilon+\frac{1}{N}) \nu( (R_{1,*} - q)^2) + K(\epsilon)e^{-cN}.
\end{align*}
The symbols $\twonorm{\cdot}$ and $\|\cdot \|_{\text{op}}$ refer to the $\ell_2$ norm of a vector and the matrix operator norm respectively. Here, $\vct{c} = (\mtx{I} - t\mtx{A})^{-1}\vct{a}$. Note that the matrix inverses are bounded even as $t \to 1$ since $\mu_1 <1$ for $\lambda \in \A$. We choose $\epsilon$ small enough and $N$ large enough that $K(\epsilon+\frac{1}{N}) <1$. We therefore get
\[\nu\left((R_{1,*} - q)^2\right) \le \frac{K(\lambda)}{N} + K(\lambda)e^{-c(t)N}.\] 

\subsection{Cavity computations for the fourth moment}
In this subsection we prove the convergence of the fourth moment: 
\[\nu((R_{1,*} - q^*)^4) \le \frac{K}{N^2} + Ke^{-c(t)N},\]
where $c(t)$ is of the same type as before.
We adopt the same technique based on the cavity method, with the extra knowledge that the second moment converges.
Many parts of the argument are exactly the same so we will only highlight the main novelties in the proof. Let
\begin{align*}
A = \nu \left((R_{1,*}-q)^4\right),~~~
B = \nu \left((R_{1,*}-q)^3(R_{2,*}-q)\right),~~~
C = \nu \left((R_{1,*}-q)^3(R_{2,3}-q)\right).
\end{align*}
By symmetry between sites, 
\begin{align*}
A &= \nu \left((R_{1,*}-q)^3(x_Nx_N^*-q)\right)\\ 
&=  \nu((R^-_{1,*}-q)^3(x_Nx_N^*-q)) + \frac{3}{N}\nu ((R^-_{1,*}-q)^2 x_Nx_N^* (x_Nx_N^*-q))\\
 &~~~+ \frac{3}{N^2}\nu ((R^-_{1,*}-q){x_N}^2{x_N^*}^2(x_Nx_N^*-q)) 
 + \frac{1}{N^3}\nu({x_N}^3{x_N^*}^3(x_Nx_N^*-q)).
\end{align*}
The quadratic term is bounded as
\[\nu( (R^-_{1,*}-q)^2 x_Nx_N^* (x_Nx_N^*-q)) \le K \nu((R^-_{1,*}-q)^2) \le \frac{K}{N} + Ke^{-cN}. \]  
The last inequality is using our extra knowledge about the convergence of the second moment. The last two terms are also bounded by $K/N^2$ and $K/N^3$ respectively. 
Now we must deal with the cubic term, and here, we apply the exact same technique used to deal with the term $\nu ((R^-_{1,*}-q)(x_Nx_N^*-q))$ in the previous proof. The argument goes verbatim. Then we equally treat the terms $B$ and $C$. We end up with a similar linear system relating $A$, $B$ and $C$:
\begin{equation*}
\vct{z} = \frac{1}{N^2}\vct{d}+t\mtx{A}\vct{z} + \vct{\delta},
\end{equation*}
where $\vct{z} = [A,B,C]^\top$. The differences with  the earlier linear system~\eqref{cavity_linear_system} are in the vector of coefficients $\vct{d}$ (that could be determined from the recursions) and the error terms $\delta(i)$, which are now bounded as
\[|\delta(i)| \le K\nu(|R^-_{1,*}-q|^{5})+K\sum_{l=1}^3 \frac{1}{N^{3-l}}\nu(|R^-_{1,*}-q|^{l}).\]
Crucially, the matrix $\mtx{A}$ remains the same.
Using Lemma~\ref{convergence_in_probability}, we have for $\epsilon >0$,
\[\nu( |R_{1,*} - q|^5)  \le \epsilon \nu( (R_{1,*} - q)^4) + K(\epsilon) e^{-cN},\]
\[\nu( |R_{1,*} - q|^3)  \le \epsilon \nu( (R_{1,*} - q)^2) + K(\epsilon) e^{-cN}.\]
With the bound we already have on $\nu( (R_{1,*} - q)^2)$, we finish the argument in the same way, by choosing $\epsilon$ sufficiently small. 
This concludes the proof of Theorem~\ref{theorem_fourth_moment}.

\subsection{Sharper results: the asymptotic variance}
Finally, given the convergence of the fourth moment, we can refine the convergence result of the second moment. Indeed we are now able to compute the limit of $N\cdot\nu((R_{1,*} - q)^2)$. 
Using Jensen's inequality on the second and fourth moment, we have
\[\nu(|R_{1,*} - q|) \le \frac{K}{\sqrt{N}} + Ke^{-c(t)N}, ~~~ \mbox{and} ~~~\nu(|R_{1,*} - q|^3) \le \frac{K}{N^{3/2}} + Ke^{-c'(t)N}.\]
Looking back at~\eqref{delta_error_2}, we can now assert that
\[|\delta(i) | \le \frac{K}{N^{3/2}} + Ke^{-c(t)N}.\]
We plug this new knowledge in~\eqref{cavity_linear_system_2}, and obtain
\begin{align*}
\twonorm{N\vct{z}-(\mtx{I} - t\mtx{A})^{-1}\vct{a}} &\le N\left\|(\mtx{I} - t\mtx{A})^{-1}\right\|_{\text{op}}\twonorm{\vct{\delta}} \le K\Big(\frac{1}{\sqrt{N}} + N e^{-c(t)N}\Big).
\end{align*}
The last line follows since $\sup_{t}\left\|(\mtx{I} - t\mtx{A})^{-1}\right\|_{\text{op}} \le K(\lambda)$ for $\lambda \in \A$.
We have just proved that
\begin{align*}
\nu((R_{1,*} - q)^2) &= \frac{c(0)}{N} + K(\lambda)\Big(\frac{1}{N^{3/2}} + e^{-c(t)N}\Big),\\
\nu((R_{1,*} - q)(R_{2,*} - q)) &= \frac{c(1)}{N} + K(\lambda)\Big(\frac{1}{N^{3/2}} + e^{-c(t)N}\Big),\\
\nu((R_{1,*} - q)(R_{2,3} - q)) &= \frac{c(2)}{N} + K(\lambda)\Big(\frac{1}{N^{3/2}} + e^{-c(t)N}\Big),
\end{align*}
where $\vct{c} = (\mtx{I} - t\mtx{A})^{-1}\vct{a}$. One can solve this linear system explicitly and obtain the expression of the coordinates of $\vct{c}$:
\begin{align*}
c(0) &= \frac{1}{\lambda t} \left(-1+ \frac{2}{1-t\mu_2} + \frac{2}{1-t\mu_1} +\frac{-3+3\lambda ta(0) -2\lambda ta(1)}{(1-t\mu_1)^2} \right),\\
c(1) &=   \frac{1}{\lambda t} \left(\frac{-3+3\lambda ta(0) -2\lambda ta(1)}{(1-t\mu_1)^2} + \frac{3}{1-t\mu_2}\right),\\
c(2) &=  \frac{4\lambda t a(1)^2 + (1- \lambda t a(0) - 5 \lambda t a(1))a(2) + 2 \lambda ta(2)^2}{(1-t\mu_1)^2(1-t\mu_2)}.
\end{align*}
The expression of the first coordinate defines $\Delta_{\RS}(\lambda; t)$, equ.\eqref{Delta_RS}. This concludes the proof of Theorem~\ref{asymptotic_variance}.

\subsection{Proof of Lemma~\ref{diagonal_term}}
Let $f(x,x^*) =  x^2 x^{*2}$. We have $\nu_0(f) = a(0)$. We use the first assertion of Lemma~\ref{first_second_order_estimates} with $\tau_1 = 1$ and $\tau_2 = \infty$ to get
\[\abs{\nu(f) -\nu_0(f)} \le K(\lambda) \nu(|R_{1,*}^- - q^*|) \le \frac{K}{\sqrt{N}} + Ke^{-c(t)N},\]
where the last bound follows from Theorem~\ref{theorem_fourth_moment} and Jensen's inequality.

\section{Proof of Theorem~\ref{central_limit_theorem}}
\label{sxn:proof_fluctuations}
In this section we prove a slightly stronger result than convergence in distribution. We prove the convergence of all moments with an explicit rate of $\bigo(N^{-1/2})$. Statement $(ii)$ is deduced effortlessly form a classical result while statement $(i)$ requires more work. We start with the former. 

\subsection{Fluctuations under $\P_{0}$: the ALR CLT}
We assume in this subsection that $P_{\x} = \half \delta_{-1} + \half \delta_{+1}$, and let $\mtx{Y} \sim \P_{0}$, i.e, $Y_{ij} \sim \normal(0,1)$ i.i.d. We then see that the likelihood ratio is related to the partition function of the Sherrington--Kirkpatrick model via a trivial relation:  
\begin{align*}
\log L(\mtx{Y};\lambda) &= \log \int \exp\Big(\sqrt{\frac{\lambda}{N}}\sum_{i < j} Y_{ij}x_ix_j -\frac{\lambda}{2N}\sum_{i < j} x_i^2x_j^2\Big) ~\rmd P_{\x}^{\otimes N}(\vct{x})\\ 
&= \log \sum_{\vct{\sigma} \in \{\pm 1\}^N} \exp\Big(\frac{\beta}{\sqrt{N}}\sum_{i < j} Y_{ij}\sigma_i\sigma_j \Big) - N \log 2 - \frac{\beta^2 (N-1)}{4}\\
&=: \log Z_N(\beta) - N \log 2 - \frac{\beta^2 (N-1)}{4},
\end{align*}
where we have let $\beta = \sqrt{\lambda}$. $Z_N(\beta)$ is the partition function of the SK model at inverse temperature $\beta>0$. It is easy to compute the expectation of $Z_N(\beta)$:
\[\log \E Z_N(\beta) = N\log 2 + \frac{\beta^2 (N-1)}{4},\]
so that $\E\log L(\mtx{Y};\lambda)$ is the gap between the free energy of the SK model and its \emph{annealed} version. The question of determining the values of the inverse temperature for which this gap is zero (or constant), i.e., \emph{at what temperatures is the free energy given by the annealed computation?} is a central question in statistical physics. 
Aizenman, Lebowitz and Ruelle (ALR) proved that in the high-temperature regime $\beta <1$, $\log (Z_N(\beta)/\E Z_N(\beta))$ converges in distribution the normal law 
\[\normal\left(\frac{1}{4}(\log(1-\beta^2) + \beta^2) , -\frac{1}{2}(\log(1-\beta^2) + \beta^2) \right).\]
In our notation this simply means
\[\log L(\mtx{Y};\lambda) \rightsquigarrow \normal(-\mu, \sigma^2)\]     
under $\P_{0}$ where $\mu = \half \sigma^2 =  -\frac{1}{4}(\log(1-\lambda) + \lambda)$. The ALR proof is combinatorial and uses so-called \emph{cluster expansion} techniques. It may not extend to other types of priors. Alternative proofs were subsequently found by adopting different perspectives on the problem~\citep{comets1995sherrington,guerra2002central}. A more recent proof based on the cavity method is provided by Talagrand in his second book~\citep[Section 11.4]{talagrand2011mean2}. His method provides an explicit (and optimal) rate of convergence of the moments of the random variable in question to those of the Gaussian. In what follows we use Talagrand's approach to prove a similar central limit theorem when $\mtx{Y} \sim \P_{\lambda}$, for an arbitrary bounded prior $P_{\x}$. In this more general setting, the high temperature region of the model is given by the condition $\lambda<\lambda_c$.   

\subsection{Fluctuations under $\P_{\lambda}$: a planted version of the ALR CLT}
Let $\lambda<\lambda_c$, and $\mtx{Y} \sim \P_{\lambda}$. We define the random variable
\[X(\lambda) = \log L(\mtx{Y}) - \mu(\lambda),\]
where
\[\mu(\lambda) = \frac{1}{4}(-\log (1-\lambda) - \lambda), \quad \mbox{and} \quad b(\lambda) = \sigma^2(\lambda) = 2\mu(\lambda).\]
We will prove that the integer moments of $X(\lambda)$ converge to those of the Gaussian with variance $b(\lambda)$. This is a sufficient condition for convergence in distribution to hold, since the Gaussian is uniquely determined by its moments.

\begin{theorem}\label{moment_inequality}
For all $\lambda<\lambda_c$ and integers $k$, there exists a constant $K(\lambda,k) \ge 0$ such that 
\[\abs{\E\left[X(\lambda)^k \right] - m(k)b(\lambda)^{k/2}}\le \frac{K(\lambda,k)}{\sqrt{N}},\]
where $m(k) = \E[g^k]$ is the $k$-th moment of the standard Gaussian $g \sim \normal(0,1)$. 
\end{theorem}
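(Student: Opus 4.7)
The plan is to follow the cavity-method proof of the ALR central limit theorem presented by Talagrand in Section 11.4 of his second book, and adapt it to the planted setting in which the Nishimori identity takes over the role played by spin-flip symmetry in the SK calculation. The target is to establish the moment recursion
\[
\E[X(\lambda)^k] \;=\; (k-1)\,b(\lambda)\,\E[X(\lambda)^{k-2}] \;+\; O(N^{-1/2})
\]
for every integer $k \ge 2$, after which a straightforward induction on $k$, with base cases $k=0$ (trivial), $k=1$ (supplied by Theorem~\ref{finite_size_correction}, since $\phi_{\RS}(\lambda)=0$ below $\lambda_c$ and $\psi_{\RS}(\lambda)=\mu(\lambda)$), and $k=2$ (supplied by Theorem~\ref{asymptotic_variance} and the identification $\int_0^1 \Delta_{\RS}(\lambda;t)\,dt = b(\lambda)$), yields the statement.

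\noindent\textbf{Cavity decomposition.} Using the interpolation of Section~\ref{sxn:cavity_method}, introduce the parameter $s\in[0,1]$ that couples the $N$-th spin to the remaining system, so that at $s=0$ the spin $x_N$ decouples and at $s=1$ one recovers the full model. Integrating the logarithmic derivative gives a decomposition
\[
X_N \;=\; X_N' + U_N,
\]
where $X_N'$ is the log-likelihood of the reduced $(N-1)$-spin system with one independent Gaussian scalar-channel observation of $x_N^*$, and $U_N$ is a cavity increment depending on the $Y_{iN}$'s and a decoupling Gaussian. The cavity interpolation is Nishimori-preserving by construction, so $X_N'$ has the same type as $X_{N-1}$ up to an $O(1/N)$ shift absorbed by the sub-Gaussian concentration of Lemma~\ref{sub_gaussian_concentration}.

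\noindent\textbf{Moment recursion via Gaussian IBP and Nishimori.} Expand $X_N^k = (X_N' + U_N)^k$ and take expectations. The terms of order $\ge 3$ in $U_N$ contribute $O(N^{-3/2})$ because the prior has bounded support and the cavity field is $O(1)$. Gaussian integration by parts in the variables $W_{iN}$ and $z_N$ reduces the conditional first moment of $U_N$, and the conditional second moment of $U_N$, to Gibbs averages of the centered overlaps $R_{1,*}^- - q^*$ and $R_{1,2}^- - q^*$ against powers of $X_N'$. In the non-reconstruction regime $\lambda<\lambda_c$ we have $q^*=0$ and the rate constant $c(t)$ in Theorem~\ref{theorem_fourth_moment} is uniformly bounded below, so the overlaps are controlled in every moment with rate $O(1/N^{k/2})$ and exponentially small tails; the Nishimori identity then collapses two-replica and planted-overlap terms and forces the leading contribution to $\E[U_N \mid X_N']$ to cancel, leaving $O(1/N)$. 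The conditional second moment concentrates at a deterministic value $v(\lambda)/N$; telescoping the increment over the $N$ cavity steps identifies $v(\lambda)$ with $b(\lambda)$ by consistency with the $k=2$ case.

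\noindent\textbf{Main obstacle and induction.} Plugging these estimates into the binomial expansion delivers
\[
\E[X_N^k] \;=\; \E[X_N'^{\,k}] \;+\; \binom{k}{2}\,\frac{b(\lambda)}{N}\,\E[X_N'^{\,k-2}] \;+\; O(N^{-3/2}),
\]
which iterated over successive cavity reductions and combined with the inductive hypothesis for $k-2$ closes the recursion with the advertised $O(N^{-1/2})$ rate. The main technical obstacle is the quantitative evaluation of the conditional second moment of $U_N$ with accuracy $O(N^{-3/2})$: a crude Cauchy--Schwarz estimate loses a factor of $N$, and one must instead use the linear cavity system of Section~\ref{sxn:cavity_method} to isolate the projection onto the eigenspace of $\mtx{A}^\top$ associated with $\mu_1$, which is exactly the mechanism producing the $1/(1-\mu_1)$ amplification in $\Delta_{\RS}$. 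Bounded moments of $X_N$ required to absorb cross-terms in the expansion follow from Lemma~\ref{sub_gaussian_concentration} together with Theorem~\ref{finite_size_correction}, so the induction propagates cleanly and the rate is preserved.
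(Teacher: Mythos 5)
Your outer strategy is genuinely different from the paper's. The paper does not telescope a site-by-site decomposition $X_N = X_N' + U_N$ over $N$ cavity steps; it differentiates $f(\lambda)=\E[X(\lambda)^k]$ in the SNR parameter $\lambda$, obtaining an identity whose coefficients are correlations of the form $\E\big[\big(N\langle R_{1,l}^2\rangle - \langle x_N^2 x_N^{(l)2}\rangle\big)X^{k-1}\big]$ for $l\in\{2,*\}$; the single hard analytic input (Proposition~\ref{delicate_overlap_convergence}) shows these decorrelate, each equalling $\frac{\lambda}{1-\lambda}\E[X^{k-1}]+O(N^{-1/2})$. The factor-of-two mismatch between the planted and replica terms then cancels $-k\mu'(\lambda)\E[X^{k-1}]$ exactly (this is where $\mu$ comes from), leaving $\frac{\rmd}{\rmd\lambda}\E[X^k]=\frac{k(k-1)}{2}b'(\lambda)\E[X^{k-2}]+O(N^{-1/2})$, which is integrated from $\lambda=0$ and closed by induction on $k$. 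The site-wise cavity method appears only inside the proof of that decorrelation proposition, not as the skeleton of the argument. An ALR-style telescoping over sites is in principle viable, but as written your proposal has concrete gaps.

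First, the identification of the limiting variance is circular. You propose to pin down the constant $v(\lambda)$ in the conditional second moment of $U_N$ ``by consistency with the $k=2$ case,'' which you claim is supplied by Theorem~\ref{asymptotic_variance} and the identity $\int_0^1\Delta_{\RS}(\lambda;t)\,\rmd t=b(\lambda)$. Theorem~\ref{asymptotic_variance} concerns the fluctuations of the overlap $R_{1,*}$, not of $\log L$, and $\int_0^1\Delta_{\RS}\,\rmd t$ enters the formula for the \emph{mean} $\psi_{\RS}(\lambda)$, not the variance of $X(\lambda)$; nothing proved before Theorem~\ref{moment_inequality} gives $\E[X^2]\to b(\lambda)$. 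The variance must come out of the recursion itself (in the paper it is the $\frac{\lambda}{1-\lambda}$ of Proposition~\ref{delicate_overlap_convergence}, integrated to $b(\lambda)$), so you must actually compute $N\,\E[U_N^2\mid\cdot]$ rather than match it to a result you do not have. Second, your error accounting for the conditional first moment is inconsistent: you assert the leading contribution to $\E[U_N\mid X_N']$ cancels ``leaving $O(1/N)$,'' but an $O(1/N)$ residual per site accumulates to $O(1)$ over $N$ sites, destroying both the centering and the claimed $O(N^{-3/2})$ per-step error in your displayed recursion. You need the conditional mean of $U_N$ to match the increment $\mu_N(\lambda)-\mu_{N-1}(\cdot)$ (including the $O(1/N)$ temperature shift of the reduced system) to accuracy $O(N^{-3/2})$, and this is exactly as delicate as the decorrelation statement you are implicitly assuming. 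Third, the key step in either route --- that $X^k$ (or $X_N'^{\,k}$) is asymptotically uncorrelated with the cavity overlap observables, with quantitative rate $O(N^{-1/2})$ --- is asserted via ``Nishimori collapses the terms'' but never argued; this is the content of the paper's Proposition~\ref{delicate_overlap_convergence} and occupies most of Section~\ref{sxn:proof_fluctuations}. Without it the proposal is a plan, not a proof.
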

This theorem mirrors Theorem 11.4.1 in~\citep{talagrand2011mean2}, and our approach is inspired by his.  
We define the function
\[f(\lambda):= \E\left[X(\lambda)^k \right].\] 
\begin{lemma}
For all $\lambda<\lambda_c$,
\begin{align} \label{derivative_f}
f'(\lambda) &= -\frac{k}{4}\E\left[\left(N\langle R_{1,2}^2\rangle - \langle x_N^2\rangle^2\right)X(\lambda)^{k-1}\right] + \frac{k}{2}\E\left[\left(N\langle R_{1,*}^2\rangle - \langle x_N^2x_N^{*2}\rangle\right)X(\lambda)^{k-1}\right]\nonumber\\
&~~~-k\mu'(\lambda)\E\left[X(\lambda)^{k-1}\right]
+\frac{k(k-1)}{4}\E\left[\left(N\langle R_{1,2}^2\rangle 
- \langle x_N^2\rangle^2\right)X(\lambda)^{k-2}\right].
\end{align}
\end{lemma}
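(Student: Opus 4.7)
The plan is a direct chain-rule computation followed by Gaussian integration by parts in the noise variables $W_{ij}$. Since under $\P_\lambda$ the underlying randomness $(\mtx{W},\vct{x}^*)$ has a $\lambda$-independent law, I can interchange differentiation and expectation and write
\[
f'(\lambda) = k\,\E\!\left[X(\lambda)^{k-1}\!\left(\frac{d}{d\lambda}\log L(\mtx{Y};\lambda) - \mu'(\lambda)\right)\right].
\]
Differentiating $\log L = \log\int e^{-H(\vct{x})}\,dP_\x^{\otimes N}(\vct{x})$ with $H$ as in~\eqref{hamiltonian}, the term $-\mu'(\lambda)\E[X^{k-1}]$ appears directly, and $\frac{d}{d\lambda}\log L = -\langle dH/d\lambda\rangle$ decomposes into three sums over $i<j$:
\[
\frac{d}{d\lambda}\log L = -\frac{1}{2N}\sum_{i<j}\langle x_i^2x_j^2\rangle + \frac{1}{2\sqrt{\lambda N}}\sum_{i<j}W_{ij}\langle x_ix_j\rangle + \frac{1}{N}\sum_{i<j}\langle x_ix_i^*x_jx_j^*\rangle.
\]

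Next I eliminate the explicit $W_{ij}$-factor via Gaussian integration by parts, $\E[W_{ij}\,G] = \E[\partial_{W_{ij}}G]$, applied to $G = \langle x_ix_j\rangle X^{k-1}$. The two key identities are $\partial_{W_{ij}}\langle x_ix_j\rangle = \sqrt{\lambda/N}\bigl(\langle x_i^2x_j^2\rangle - \langle x_ix_j\rangle^2\bigr)$ and $\partial_{W_{ij}}\log L = \sqrt{\lambda/N}\langle x_ix_j\rangle$. The prefactor $1/(2\sqrt{\lambda N})$ exactly cancels the $\sqrt{\lambda/N}$ produced by IBP, so the apparent $1/\sqrt{\lambda}$ singularity is spurious. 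The $\langle x_i^2x_j^2\rangle$ piece that IBP creates is annihilated by the first (direct-derivative) term, leaving behind $-\tfrac{k}{2N}\sum_{i<j}\E[\langle x_ix_j\rangle^2 X^{k-1}]$ together with the chain-rule contribution $+\tfrac{k(k-1)}{2N}\sum_{i<j}\E[\langle x_ix_j\rangle^2 X^{k-2}]$.

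Finally I convert the surviving sums into overlaps using replicas and symmetry between sites. Writing $\langle x_ix_j\rangle^2 = \langle x_i^{(1)}x_j^{(1)}x_i^{(2)}x_j^{(2)}\rangle$ and using $\sum_{i,j} x_i^{(1)}x_j^{(1)}x_i^{(2)}x_j^{(2)} = N^2 R_{1,2}^2$, the diagonal contribution $\sum_i\langle(x_i^{(1)})^2(x_i^{(2)})^2\rangle = N\langle x_N^2\rangle^2$ (by site symmetry) gives
\[
\sum_{i<j}\langle x_ix_j\rangle^2 = \tfrac{1}{2}\bigl(N^2\langle R_{1,2}^2\rangle - N\langle x_N^2\rangle^2\bigr),
\]
and the analogous identity with $\vct{x}^*$ replacing one replica gives $\sum_{i<j}\langle x_ix_i^*x_jx_j^*\rangle = \tfrac{1}{2}(N^2\langle R_{1,*}^2\rangle - N\langle x_N^2 x_N^{*2}\rangle)$. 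Substituting these into the expression from the previous paragraph reproduces the four terms of~\eqref{derivative_f}.

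There is no genuine analytic obstacle here; the lemma is purely a bookkeeping identity. The points requiring care are (i) verifying that the $\langle x_i^2x_j^2\rangle$ contributions indeed cancel between the direct derivative and the IBP output (this is what makes $f'(\lambda)$ a sensible, $N$-bounded quantity), and (ii) keeping track of the diagonal $i=j$ contributions under site symmetry, which are precisely the $\langle x_N^2\rangle^2$ and $\langle x_N^2 x_N^{*2}\rangle$ subtractions in the stated formula.
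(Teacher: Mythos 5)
Your computation is correct and is exactly the ``simple differentiation and regrouping of terms'' that the paper's one-line proof alludes to: interchange $d/d\lambda$ with $\E$ (legitimate since the law of $(\mtx{W},\vct{x}^*)$ is $\lambda$-free), integrate the $W_{ij}$-term by parts so that the $\langle x_i^2x_j^2\rangle$ contributions cancel, and convert the surviving sums to $N\langle R_{1,2}^2\rangle-\langle x_N^2\rangle^2$ and $N\langle R_{1,*}^2\rangle-\langle x_N^2x_N^{*2}\rangle$ via replicas and site symmetry. All coefficients check out against~\eqref{derivative_f}.
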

\begin{proof}
This is by simple differentiation and regrouping of terms.
\end{proof}

The derivative involves averages of the form 
\[\E\left[\left(N\langle R_{1,l}^2\rangle - \langle {x^{(1)}_N}^2{x^{(l)}_N}^2\rangle\right)X(\lambda)^k\right],\]
for $l=2,*$. In the first line of~\eqref{derivative_f}, we see that the planted term $l=*$ has a pre-factor twice as big the that of the replica term $l=2$. This is the reason the mean of the limiting Gaussian is $\mu$ and not $-\mu$ in the planted case.  
A crucial step in the argument is to show that $X(\lambda)^k$ and its pre-factor in the above expression are asymptotically uncorrelated, so that one can split the expectation:
  
\begin{proposition}\label{delicate_overlap_convergence}
For all $\lambda<\lambda_c$ and integers $k \ge 1$, and $l \in \{2,*\}$, we have
\[\E\left[\left(N\langle R_{1,l}^2\rangle - \langle {x^{(1)}_N}^2{x^{(l)}_N}^2\rangle\right)X(\lambda)^k\right] = \frac{\lambda}{1-\lambda}\E\left[ X(\lambda)^k\right] + \delta,\]
 where $|\delta| \le K(k,\lambda)/\sqrt{N}$. 
\end{proposition}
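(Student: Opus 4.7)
My approach is the cavity analysis of Section~\ref{sxn:cavity_method}, now carried out with the external random factor $g := X(\lambda)^k$ retained throughout, in the spirit of Talagrand's proof of the ALR CLT~\citep[Section 11.4]{talagrand2011mean2}. By site symmetry,
\[
N\langle R_{1,l}^{2}\rangle - \langle x_{N}^{(1)2}x_{N}^{(l)2}\rangle = N\langle x_{N}^{(1)}\,x_{N}^{(l)}\,R^{-}_{1,l}\rangle,
\]
with $R^{-}_{1,l} = \frac{1}{N}\sum_{i<N}x_{i}^{(1)}x_{i}^{(l)}$, so it is enough to show
\[
N\,\nu\bigl(x_{N}^{(1)}x_{N}^{(l)}R^{-}_{1,l}\,g\bigr) = \frac{\lambda}{1-\lambda}\,\nu(g) + O(1/\sqrt{N}).
\]

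Introducing the cavity interpolation on $x_{N}$ of Section~\ref{sxn:cavity_method} and using $\lambda<\lambda_{c} \Longrightarrow q^{*}(\lambda)=0 \Longrightarrow r=0$, at $s=0$ the variable $x_{N}$ decouples completely from the Gibbs measure on the remaining spins, and each replica's $x_{N}^{(l)}$ is an independent draw from the centered prior $P_{\x}$. Factorization then gives $\nu_{0}(x_{N}^{(1)}x_{N}^{(l)}R^{-}_{1,l}g) = 0$ in both cases $l=2$ and $l=*$. Writing $\nu(f) = \int_{0}^{1}\nu_{s}'(f)\,ds$ for $f = x_{N}^{(1)}x_{N}^{(l)}R^{-}_{1,l}g$ and expanding $\nu_{s}'(f)$ via Lemma~\ref{gibbs_derivative}, the $y^{(\cdot)}=x_{N}^{(\cdot)}$ moments collapse under $\langle y^{2}\rangle_{0}=1$ and $\langle y\rangle_{0}=0$, producing the three-dimensional linear system~\eqref{cavity_linear_system} applied to the $g$-weighted triple
\[
\widehat{A} := N\nu(R_{1,l}^{2}\,g),\quad \widehat{B} := N\nu(R_{1,2}R_{1,3}\,g),\quad \widehat{C} := N\nu(R_{1,2}R_{3,4}\,g),
\]
but with source vector $\nu(g)\,\vct{a}$ instead of $\vct{a}$. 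In the non-reconstruction regime $a(0)=1$ and $a(1)=a(2)=0$, so the cavity matrix~\eqref{cavity_matrix_2} collapses to $\mtx{A} = \lambda\,\mtx{I}$ and $(\mtx{I} - \mtx{A})^{-1} = \frac{1}{1-\lambda}\mtx{I}$. Inverting yields $\widehat{A} = \frac{1}{1-\lambda}\nu(g) + O(1/\sqrt{N})$; subtracting $\nu(\langle x_{N}^{(1)2}x_{N}^{(l)2}\rangle g) = a(0)\,\nu(g)+O(1/\sqrt{N}) = \nu(g)+O(1/\sqrt{N})$ (Lemma~\ref{diagonal_term} combined with a decorrelation step) delivers the asserted factor $\frac{\lambda}{1-\lambda}$.

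The main obstacle is the bookkeeping of extra terms arising when Gaussian integration by parts is applied to a variable $W_{iN}$ that appears \emph{both} in the Hamiltonian and inside $g$, since $X(\lambda) = \log L(\mtx{Y};\lambda) - \mu$ depends on every $Y_{ij}$, including $Y_{iN}$. Each such step contributes $\partial_{W_{iN}}g = kX(\lambda)^{k-1}\sqrt{\lambda/N}\,\langle x_{i}x_{N}\rangle_{\lambda}$, and the Nishimori identity gives only $\sum_{i<N}\E\langle x_{i}x_{N}\rangle_{\lambda}^{2} = N\E\langle R_{1,*}^{2}\rangle - \E\langle x_{N}^{2}x_{N}^{*2}\rangle = O(1)$, so crude Cauchy--Schwarz on these contributions is only $O(1)$ rather than the required $O(1/\sqrt{N})$. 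The remedy I would pursue is to feed these cross terms back into the cavity linear system, where they enter as perturbations of the source vector and can be absorbed into the error vector $\vct{\delta}$ with $\|\vct{\delta}\|=O(1/\sqrt{N})$, exploiting the Nishimori structure that forced the cavity matrix $\mtx{A}$ to be the same as in the unperturbed computation. The moment bound $\E|X(\lambda)|^{2(k-1)} = O(1)$ needed to close the argument comes from the preceding inductive step of Theorem~\ref{moment_inequality}.
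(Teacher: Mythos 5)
Your skeleton is the right one and matches the paper's: cavity interpolation on the last site, $\nu_0(f)=0$ by decoupling of the centered $x_N$, the collapse $a(0)=1$, $a(1)=a(2)=0$ below $\lambda_c$ so that the self-consistent equation reads $(1-\lambda)\,N\nu(\langle R_{1,l}^2\rangle g)=\nu(\langle x_N^{(1)2}x_N^{(l)2}\rangle g)+\delta$, and finally the decorrelation of $\langle x_N^{(1)2}x_N^{(l)2}\rangle$ from $X(\lambda)^k$. But the obstacle you flag at the end is not a bookkeeping nuisance that can be "absorbed into $\vct{\delta}$" --- it is the crux of the proof, and your proposal does not close it. Lemma~\ref{gibbs_derivative} is stated for $f$ a function of the replicas and the spike only; once $g=X(\lambda)^k$ sits inside $\nu_s(\cdot)$, Gaussian integration by parts in $W_{iN}$ also hits $g$, and as you yourself compute, the resulting cross terms are $\bigo(1)$ by Cauchy--Schwarz/Nishimori, not $\bigo(1/\sqrt{N})$. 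Asserting that they "enter as perturbations of the source vector and can be absorbed into the error vector with $\|\vct{\delta}\|=\bigo(1/\sqrt{N})$" is exactly the statement that needs proof, and it is false at the level of a pointwise-in-$s$ bound: those terms are genuinely of order one for $s>0$ and only become negligible after a second Taylor expansion around $s=0$.

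The paper's resolution is to interpolate the external factor along with the Gibbs measure: replace $X(\lambda)^k$ by $Y(t)^k$, where $Y(t)$ is the (centered) log-partition function of the cavity Hamiltonian $H_t$ itself, and study $\varphi(t;l)=\E[(N\langle R_{1,l}^2\rangle_t-\langle x_N^{(1)2}x_N^{(l)2}\rangle_t)Y(t)^k]$ as a function of the single parameter $t$. Then every cross term you worry about appears as a legitimate term of $\varphi'(t;l)$ carrying a lower power $Y(t)^{k-1}$ or $Y(t)^{k-2}$ (the generic form~\eqref{first_derivative_order_two}); at $t=0$ all of these vanish by parity because they contain an odd power of the decoupled, centered $x_N$, leaving only the $(a,b)=(1,2)$, $n=k$ term, which produces the coefficient $\lambda$. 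The residual is controlled by $\sup_t|\varphi''(t;l)|\le K/\sqrt{N}$, which requires the fourth-moment bound of Theorem~\ref{theorem_fourth_moment} together with the uniform moment bound $\E[Y(t)^{2k}]\le K$ of Lemma~\ref{bounded_moments} --- neither of which appears in your outline. A second omission: your final subtraction invokes Lemma~\ref{diagonal_term} "combined with a decorrelation step," but Lemma~\ref{diagonal_term} only gives $\E\langle x_N^2x_N^{*2}\rangle_t\to a(0)$ without the factor $X(\lambda)^k$; the decorrelation $\E[\langle x_N^{(1)2}x_N^{(l)2}\rangle X(\lambda)^k]=\E[X(\lambda)^k]+\bigo(1/\sqrt{N})$ is a separate interpolation argument (the second stage,~\eqref{second_fundamental_bound}, of the paper's proof) and must be carried out, not cited.
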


\vspace{.5cm}
\begin{proofof}{Theorem~\ref{moment_inequality}}
Proposition~\ref{delicate_overlap_convergence} implies
\[f'(\lambda) = k\left(\frac{1}{4}\frac{\lambda}{1-\lambda} - \mu'(\lambda)\right)\E\left[X(\lambda)^{k-1}\right] + \frac{k(k-1)}{4}\frac{\lambda}{1-\lambda}\E\left[X(\lambda)^{k-2}\right] + \delta.\]  
Notice that with our choice of the function $\mu$, the first term on the right-hand side vanishes. (Setting this term to zero provides another way of discovering the function $\mu$.) Now we let $b(\lambda) = 2\mu(\lambda)$. We have for all $\lambda$ and all $k \ge 2$
\begin{equation}\label{key_recursion}
\frac{\rmd }{\rmd \lambda} \E\left[X(\lambda)^{k}\right] = \frac{k(k-1)}{2}b'(\lambda)\E\left[X(\lambda)^{k-2}\right] + \delta.
\end{equation}
By induction, and since $X(0) = 0$, we see that for all even $k$
\[\E\left[X(\lambda)^{k}\right] = m(k)b(\lambda)^{k/2} + \bigo\left(\frac{K(k,\lambda)}{\sqrt{N}}\right),\]
where $m(k) = (k-1)m(k-2)$ and $m(0)=1$. The last recursion defines the sequence of even Gaussian moments. As for odd values of $k$, we have already proved in Corollary~\ref{cor_kul_divergence} that
\[\abs{\E\left[X(\lambda)\right]} \le \frac{K(\lambda)}{\sqrt{N}}.\] 
We use induction again on~\eqref{key_recursion} to conclude that for all odd $k$,
\[\abs{\E\left[X(\lambda)^{k}\right]} \le \frac{K(k,\lambda)}{\sqrt{N}}.\] 
\end{proofof}

\subsection{Proof of Proposition~\ref{delicate_overlap_convergence}}
The argument is in two stages. We first prove that 
\begin{equation}\label{first_fundamental_bound}
N\cdot \E\left[\langle R_{1,l}^2\rangle X(\lambda)^k\right] = \frac{1}{1-\lambda}\E\left[\langle {x^{(1)}_N}^2{x^{(l)}_N}^2\rangle X(\lambda)^k\right] + \delta,
\end{equation}
and then
\begin{equation}\label{second_fundamental_bound}
\E\left[\langle{x^{(1)}_N}^2{x^{(l)}_N}^2\rangle X(\lambda)^k\right] = \E\left[X(\lambda)^k\right] + \delta,
\end{equation}
where in both cases $|\delta| \le K(k,\lambda)/\sqrt{N}$.
We once again use the cavity method to extract the last variable $x_N$ and analyze its influence. Let
\begin{align*}
-H_t(\vct{x}) &:= \sum_{1\le i < j \le N-1} -\frac{\lambda}{2N} x_i^2x_j^2 + \sqrt{\frac{\lambda}{N}} W_{ij}x_ix_j + \frac{\lambda}{N}x_ix_i^*x_jx_j^*\\
&~~~+\sum_{i =1}^{N-1}-\frac{t\lambda}{2N} x_i^2 x_N^2 + \sqrt{\frac{t\lambda}{N}} W_{iN}x_i x_N + \frac{t\lambda}{N} x_ix_i^* x_Nx_N^*,
\end{align*}
and
\[Y(t):= \log \int e^{-H_t(\vct{x})} \rmd P_{\x}^{\otimes N}(\vct{x}) - \mu(\lambda).\]
We have $Y(1) = X(\lambda)$. We consider the quantity 
\[\varphi(t;l) := \E\left[\left(N\langle R_{1,l}^2\rangle_t - \langle {x^{(1)}_N}^2{x^{(l)}_N}^2\rangle_t\right)Y(t)^k\right].\]
Our strategy is approximate $\varphi(t;l)$ by $\varphi(0;l)+\varphi'(0;l)$.  
The approach is very similar to the one used to prove optimal rates of convergence of the overlaps. 
By symmetry between sites, we have
\[\varphi(t;l) =  N\E\left[\left\langle x_N^{(1)}x_N^{(l)}R_{1,l}^-\right\rangle_tY(t)^k\right].\]
Notice that since the last variables decouple from the rest of the system at $t=0$, we have
\begin{align*}
\varphi(0;l) &= N\E\left[\langle x_N^{(1)}x_N^{(l)}\rangle_0\right] \cdot \E\left[\left\langle R_{1,l}^-\right\rangle_0Y(0)^k\right]\\
&= N\E_{P_{\x}}[X]^2 \cdot \E\left[\left\langle R_{1,l}^-\right\rangle_0Y(0)^k\right] = 0.
\end{align*}
The expressions of the derivatives are a bit cumbersome so we do not display them, but we will describe their main features. From here onwards, we present the proof of~\eqref{first_fundamental_bound} and ~\eqref{second_fundamental_bound} for $l=2$ for concreteness. The exact same argument goes through for $l=*$. The only difference is in the number of terms showing up the derivatives of $\varphi$, not their nature.      
The derivative $\varphi'(t;2)$ will be a sum of different terms, all of the form 
\begin{equation}\label{first_derivative_order_two}
\lambda N c(k) \E\left[\left\langle R^-_{1,2}R^-_{a,b} x_N^{(1)}x_N^{(2)}x_N^{(a)}x_N^{(b)}\right\rangle_t Y(t)^n\right],
\end{equation}
where $n\in \{k-2,k-1,k\}$ and $(a,b) \in \{(1,2),(1,3),(3,4),(1,*),(3,*)\}$, and $c(k)$ is a polynomial of degree $\le 2$ in $k$. 
We see that at $t=0$, if the above expression involves a variable $x_N$ of degree 1 then this term vanishes. Therefore the only remaining term is the one where $(a,b) = (1,2)$. One can verify that $c(k) = 1$ for this term. Therefore
\begin{align}
\varphi'(0;2) &= \lambda N  \E\left[\langle {x_N^{(1)}}^2{x_N^{(2)}}^2\rangle_0\right] \cdot \E \left[\langle (R_{1,l}^-)^2 \rangle_0 Y(0)^k \right] \nonumber\\
&= \lambda N  \E_{P_{\x}}\left[X^2\right]^2 \cdot \E \left[\langle (R_{1,l}^-)^2 \rangle_0 Y(0)^k \right]\nonumber\\
&= \lambda N  \E \left[\langle (R_{1,l}^-)^2 \rangle_0 Y(0)^k \right].\label{second_derivative_at_0}
\end{align}

Now we turn to $\varphi''(t;2)$. Taking another derivative generates monomials of degree three in the overlaps and the last variable, so $\varphi''(t;2)$ is a sum of terms of the form
\begin{equation}\label{second_derivative_order_three}
\lambda^2 N c'(k) \E\left[\left\langle R^-_{1,2}R^-_{a,b}R^-_{c,d} x_N^{(1)}x_N^{(2)}x_N^{(a)}x_N^{(b)}x_N^{(c)}x_N^{(d)}\right\rangle_t Y(t)^n\right],
\end{equation}
where $c'(k)$ is a polynomial of degree $\le 3$ in $k$, and $n \in \{k-3,k-2,k-1,k\}$.
Our goal is to bound the second derivative independently of $t$, so that we are able to use the Taylor approximation
\begin{equation}\label{taylor_number_2}
\abs{\varphi(1;2) -\varphi(0;2)-\varphi'(0;2)} \le \sup_{0\le t \le 1}\abs{\varphi''(t;2)}.
\end{equation}
Since prior $P_{\x}$ has bounded support, H\"older's inequality implies that~\eqref{second_derivative_order_three} is bounded by  
\begin{align*} 
N K(k,\lambda) &\E\left[\left\langle \abs{R^-_{1,2}R^-_{a,b}R^-_{c,d}}\right\rangle_t^p\right]^{1/p} \E\left[|Y(t)|^{nq}\right]^{1/q}\\
 &\le N K(k,\lambda) \E\left[\left\langle |R^-_{1,2}|^{3p}\right\rangle_t\right]^{1/p} \E\left[|Y(t)|^{nq}\right]^{1/q},
\end{align*}
where $1/p+1/q=1$. The last bound follows from Jensen's inequality (since $p\ge 1$) and another application of H\"older's inequality. We let $p=4/3$ and $q=4$. Using a straightforward analogue of Lemma~\ref{bound_gibbs_derivative} for the measure $\E\langle \cdot \rangle_t$, and the convergence of the fourth moment, Theorem~\ref{theorem_fourth_moment}, we have 
\[\E\left\langle (R^-_{1,2})^{4}\right\rangle_t \le K(\lambda)\E\left\langle (R^-_{1,2})^{4}\right\rangle \le \frac{K(\lambda)}{N^2}.\]
We use the following lemma to bound the moments of $Y(t)$: 
\begin{lemma}\label{bounded_moments}
For all $\lambda <\lambda_c$ and integers $k$, there exists a constant $K(k,\lambda) \ge 0$ such that for all $t \in [0,1]$
\[\E\left[Y(t)^{2k}\right] \le K(k,\lambda). \]
\end{lemma}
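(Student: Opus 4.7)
The key observation is that $Z_t := \int e^{-H_t(\vct{x})} \rmd P_{\x}^{\otimes N}(\vct{x})$ is itself a likelihood ratio between a modified planted model and its null. Introduce the auxiliary ``asymmetric'' spiked Wigner model $\mathbb{Q}_\lambda^{(t)}$ in which edges $(i,j)$ with $i<j<N$ carry SNR $\lambda$ while the cavity edges $(i,N)$ carry SNR $t\lambda$, i.e.\
\[\widetilde{Y}_{ij} = \sqrt{\tfrac{\lambda}{N}}\, x_i^* x_j^* + W_{ij}\ \ (i<j<N), \qquad \widetilde{Y}_{iN} = \sqrt{\tfrac{t\lambda}{N}}\, x_i^* x_N^* + W_{iN}\ \ (i<N),\]
with the same Wigner noise and prior as in the original model. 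A direct algebraic check shows that $Z_t$, written in terms of $(W,\vct{x}^*)$, equals the likelihood ratio $\rmd\mathbb{Q}_\lambda^{(t)}/\rmd\mathbb{Q}_0^{(t)}$ evaluated at $\widetilde{Y}$. Since the joint law of $(W,\vct{x}^*)$ under $\P_\lambda$ is identical to its joint law under $\mathbb{Q}_\lambda^{(t)}$, the usual change-of-measure identity gives
\[\E_{\P_\lambda}[Z_t^s] = \E_{\mathbb{Q}_0^{(t)}}[Z_t^{\,s+1}] \qquad\text{for every } s \ge -1.\]

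\textbf{Step 1: bound the second moment uniformly.} Gaussian integration against the noise $\widetilde{Y}$ under $\mathbb{Q}_0^{(t)}$ yields
\[\E_{\mathbb{Q}_0^{(t)}}[Z_t^2] = \E_{\vct{x}^1,\vct{x}^2}\exp\!\left(\tfrac{\lambda}{N}\sum_{i<j<N}x_i^1 x_j^1 x_i^2 x_j^2 + \tfrac{t\lambda}{N}\sum_{i<N}x_i^1 x_N^1 x_i^2 x_N^2\right),\]
where $\vct{x}^1,\vct{x}^2$ are independent draws from $P_{\x}^{\otimes N}$. The second sum is bounded in absolute value by a constant since the prior has bounded support and only $N-1$ terms of size $O(1/N)$ appear; it contributes a harmless multiplicative factor. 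For the first sum, the identity $\tfrac{2}{N}\sum_{i<j<N} x_i^1 x_j^1 x_i^2 x_j^2 = \tfrac{1}{N}S^2 - \tfrac{1}{N}Q$ with $S = \sum_{i<N} x_i^1 x_i^2$ and $Q = \sum_{i<N}(x_i^1 x_i^2)^2$ uniformly bounded, together with the CLT $S/\sqrt{N}\rightsquigarrow \normal(0,(\E_{P_{\x}}[X^2])^2)$, yields convergence to $(1-\lambda (\E_{P_{\x}}[X^2])^2)^{-1/2}$, which is finite whenever $\lambda (\E[X^2])^2 < 1$. By Lemma~\ref{spectral_bound} this is automatic below $\lambda_c$, so $\sup_{t\in[0,1],\,N\ge 1} \E_{\mathbb{Q}_0^{(t)}}[Z_t^2] \le K(\lambda)$.

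\textbf{Step 2: interpolate and conclude.} Since $\E_{\mathbb{Q}_0^{(t)}}[Z_t] = 1$, Jensen's inequality (for the concave map $x \mapsto x^{s+1}$) gives $\E[Z_t^{s+1}] \le 1$ for $s+1\in[0,1]$, and Lyapunov's inequality gives $\E[Z_t^{s+1}] \le (\E Z_t^2)^{(s+1)/2}\le K(\lambda)^{(s+1)/2}$ for $s+1\in[1,2]$. Combining,
\[\E_{\P_\lambda}\!\left[e^{s Y(t)}\right] = e^{-s\mu(\lambda)}\,\E_{\P_\lambda}[Z_t^s] \;\le\; K'(\lambda) \qquad \text{for } s\in[-1,1],\ t\in[0,1],\ N\ge 1.\]
Markov's inequality therefore produces the sub-exponential tail $\P_\lambda(|Y(t)|\ge u) \le 2 K'(\lambda) e^{-u}$ for every $u\ge 0$, and integrating this tail yields $\E[Y(t)^{2k}] \le 2K'(\lambda) \cdot (2k)! =: K(k,\lambda)$, which is the claim of the lemma.

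\paragraph{Main obstacle.} The only substantive step is the uniform second-moment bound of Step~1: one must keep $\E\exp(\lambda S^2/(2N))$ finite as $N\to\infty$. This requires exactly the spectral threshold condition $\lambda (\E X^2)^2 < 1$, which by Lemma~\ref{spectral_bound} is guaranteed below $\lambda_c$. The cavity perturbation from the $N$-th spin is benign precisely because its total contribution is $O(1)$ (a sum of $N-1$ terms of size $O(1/N)$) rather than extensive, which is why the estimate is uniform in $t\in[0,1]$.
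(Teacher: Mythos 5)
There is a genuine gap, and it sits exactly where you flagged the ``only substantive step'': the uniform second-moment bound of Step~1 is false for general bounded, centered, unit-variance priors below $\lambda_c$. The condition $\lambda(\E_{P_{\x}}[X^2])^2<1$ guarantees that the \emph{limit} $(1-\lambda)^{-1/2}$ is finite, but not that $\E\exp(\lambda S^2/2N)$ is bounded uniformly in $N$; convergence in distribution of $S/\sqrt N$ does not transfer to this unbounded test function without uniform integrability, and that is precisely what fails. Concretely, for the sparse Rademacher prior $P_{\x}=\frac{\rho}{2}\delta_{-1/\sqrt\rho}+(1-\rho)\delta_0+\frac{\rho}{2}\delta_{1/\sqrt\rho}$ one has $\xi_i:=x_i^1x_i^2=1/\rho$ for all $i<N$ with probability $(\rho^2/2)^{N-1}$, whence
\[
\E\, e^{\lambda S^2/2N}\;\ge\;\exp\Big(N\big[\tfrac{\lambda}{2\rho^2}+\log(\rho^2/2)\big](1+o(1))\Big),
\]
which diverges exponentially as soon as $\lambda>2\rho^2\log(2/\rho^2)$. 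For $\rho$ just below $\rho^*\approx 0.092$ this threshold is about $0.09$ while $\lambda_c$ is close to $1$, so your bound fails on most of $(0,\lambda_c)$. This is exactly the rare-event blow-up of the unconditioned second moment that the introduction of the paper cites as the reason for avoiding the second-moment method altogether; your argument reintroduces it. (A secondary issue: even in the regime where the bound does hold, the rigorous version of Step~1 via Hoeffding or Hubbard--Stratonovich requires $\lambda\,\|X\|_\infty^4<1$, which is strictly stronger than $\lambda<1$ unless $|X|$ is constant, e.g.\ Rademacher.) Steps 2--3 (change of measure, Jensen/Lyapunov, Markov, tail integration) are fine, but they cannot be salvaged without a truncated/conditioned second moment, which is a different and prior-specific enterprise.

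The paper's proof takes an entirely different and much softer route that never touches exponential moments of $Z_t$: it differentiates $\E[Y(t)^k]$ in $t$, uses Gaussian integration by parts to express the derivative through Gibbs averages of bounded quantities (overlaps $R^-_{1,2},R^-_{1,*}$ and the spins $x_N$), bounds it by $K(k,\lambda)\big(\E[|Y(t)|^k]^{1-1/k}+\E[|Y(t)|^k]^{1-2/k}\big)$ via H\"older, and closes with $x^a\le 1+x$ and Gr\"onwall's lemma. That argument uses only boundedness of the prior's support and is therefore insensitive to the rare events that kill the annealed computation. If you want to keep your change-of-measure framing, you would need to replace Step~1 by a conditioned second moment (as in Perry et al.\ or Banks et al.), at which point the approach loses the generality the lemma requires.
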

\begin{proof}
Taking a derivative w.r.t.\ time, we have
\begin{align*}
\frac{\rmd}{\rmd t}\E\left[Y(t)^k\right] = &-\frac{\lambda k}{2}\E\left[\left\langle x_N^{(1)}x_N^{(2)}R_{1,2}^-\right\rangle_tY(t)^{k-1}\right] 
+ \lambda k\E\left[\left\langle x_N^{(1)}x_N^{*}R_{1,*}^-\right\rangle_tY(t)^{k-1}\right] \\
&+ \frac{\lambda k(k-1)}{2}\E\left[\left\langle x_N^{(1)}x_N^{(2)}R_{1,2}^-\right\rangle_tY(t)^{k-2}\right].
\end{align*}
By H\"older's inequality and boundedness of the variables and overlaps, 
\[\abs{\frac{\rmd}{\rmd t}\E\left[Y(t)^k\right] } \le K(k,\lambda) \left(\E\left[|Y(t)|^k\right]^{1-1/k} + \E\left[|Y(t)|^k\right]^{1-2/k}\right).\]
The first term is generated by the terms involving $Y(t)^{k-1}$ in the derivative, and the second term comes from the one involving $Y(t)^{k-2}$. Since $k$ is even, we drop the absolute values on the right-hand side. Next, use the fact $x^a \le 1+x$ for all $x\ge 0$ and $0\le a\le1$, then we use Gr\"onwall's lemma to conclude.
\end{proof}

Therefore by the above estimates we have 
\begin{equation}\label{second_derivative_bound}
\sup_{0\le t \le 1}|\varphi''(t;2)| \le \frac{K(k,\lambda)}{\sqrt{N}}.
\end{equation}
Now, our next goal is to prove
\begin{equation}\label{first_derivative_approx}
\abs{ \varphi'(0;2) - \lambda N \E \left[\langle R_{1,2}^2 \rangle X(\lambda)^k \right]} \le\frac{K(k,\lambda)}{\sqrt{N}}.
\end{equation}
We consider the function (this should come with no surprise at this point)
\[\psi(t) := \lambda N \E \left[\left\langle (R_{1,2}^-)^2 \right\rangle_t Y(t)^k \right].\]
Observe that~\eqref{second_derivative_at_0} tells us $\psi(0)=\varphi'(0;2)$. On the other hand, 
\[\abs{\psi(1) -  \lambda N \E \left[\langle R_{1,2}^2 \rangle X(\lambda)^k \right]} \le 2\lambda \E\left[\left\langle \abs{R_{1,2}^- x_N^{(1)}x_N^{(2)}}\right\rangle |X(\lambda)|^k\right] + \frac{\lambda}{N} \E\left[\left\langle {x_N^{(1)}}^2{x_N^{(2)}}^2\right\rangle |X(\lambda)|^k\right].\]
Using Lemma~\ref{bounded_moments} and H\"older's inequality, the first term is bounded by 
$K(k,\lambda) (\E\langle (R_{1,2}^-)^2\rangle)^{1/2} \le K(k,\lambda)/\sqrt{N}$, 
and the second term is bounded by $K(k,\lambda)/N$. So it suffices to show that
\[\sup_{0\le t \le 1}|\psi'(t)| \le \frac{K(k,\lambda)}{\sqrt{N}}.\]
Similarly to $\varphi$, the derivative of $\psi$ is a sum of terms of the form
\[\lambda^2 N c(k) \E\left[\left\langle (R^-_{1,2})^2R^-_{a,b}x_N^{(a)}x_N^{(b)}\right\rangle_t Y(t)^n\right].\]
It is clear that the same method used to bound $\varphi''$ (the generic term of which is~\eqref{second_derivative_order_three}) also works in this case, so we obtain the desired bound on $\psi'$. 
Finally, using~\eqref{taylor_number_2}, \eqref{second_derivative_bound} and \eqref{first_derivative_approx}, we obtain 
\[N\E\left[\langle R_{1,2}^2\rangle X(\lambda)^k\right] - \E\left[\langle {x^{(1)}_N}^2{x^{(2)}_N}^2\rangle X(\lambda)^k\right] = \lambda N \E \left[\langle R_{1,2}^2 \rangle X(\lambda)^k \right] + \delta,\]
where $|\delta| \le K(k,\lambda)/\sqrt{N}$. This is equivalent to~\eqref{first_fundamental_bound} and closes the first stage of the argument.
Now we need to show that
\[\E\left[\langle{x^{(1)}_N}^2{x^{(2)}_N}^2\rangle X(\lambda)^k\right] = \E\left[X(\lambda)^k\right] + \delta.\]
The argument has become a routine by now: we consider the function
\[\psi(t) = \E\left[\langle{x^{(1)}_N}^2{x^{(2)}_N}^2\rangle_t Y(t)^k\right].\]
We have 
\[\psi(0) = \E\left[\langle{x^{(1)}_N}^2{x^{(2)}_N}^2\rangle_0\right] \cdot \E\left[Y(0)^k\right] =  \E_{P_{\x}}[X^2]^2 \cdot \E\left[Y(0)^k\right] = \E\left[Y(0)^k\right].\]
The derivative of $\psi$ is a sum of term of the form
\[\lambda c(k)\E\left[\left\langle{x^{(1)}_N}^2{x^{(2)}_N}^2 R^-_{a,b}x_N^{(a)}x_N^{(b)}\right\rangle_t Y(t)^n\right].\]
By our earlier argument, $|\psi'(t)| \le K(k,\lambda)/\sqrt{N}$ for all $t$. We similarly argue that $\abs{\frac{\rmd}{\rmd t}\E[Y(t)^k]} \le K(k,\lambda)/\sqrt{N}$ for all $t$, so that
\[\abs{\psi(1) - \E\left[Y(1)^k\right]} \le \frac{K(k,\lambda)}{\sqrt{N}}.\]  
This yields~\eqref{second_fundamental_bound} and thus concludes the proof.

\appendix

\section{Appendix}
\label{sxn:appendix}

Here, we prove Lemma~\ref{asymmetry_lower_bound}. 
A straightforward calculation reveals that 
\begin{equation*} 
\frac{\partial}{\partial s} \widebar{\psi}(r,s) = \E\left[\langle xx^*\rangle\right], 
\quad \mbox{and} \quad 
\frac{\partial^2}{\partial s^2} \widebar{\psi}(r,s) = \E\left[x^{* 2}(\langle x^2\rangle-\langle x\rangle^2)\right] >0,
\end{equation*} 
so that $s \mapsto \frac{\partial}{\partial s} \widebar{\psi}(r,s)$ is Lipschitz and strongly convex on any interval, and for all $r\ge 0$. �

Let $\nu = P_{\x}$, and let $\mu$ be the symmetric part of $P_{\x}$, i.e., $\mu(A) = (P_{\x}(A)+P_{\x}(-A))/2$ for all Borel $A \subseteq \R$. We observe that $\nu$ is absolutely continuous with respect to $\mu$, so that the Radon-Nikodym derivative $\frac{\rmd \nu}{\rmd \mu}$ is a well-defined measurable function from $\R$ to $\Rp$ that integrates to one.

\begin{proposition}\label{asymmetry_lower_bound2}
For all $r \ge 0$, we have 
\[\widebar{\psi}(r,r)  -\widebar{\psi}(r,-r) \ge 2\E \left[\left\langle \frac{\rmd \nu}{\rmd \mu}(x) - 1 \right\rangle_{\mu,r}^2\right],\]
where $\langle \cdot \rangle_{\mu,r}$ is the average w.r.t.\ to the Gibbs measure corresponding to the Gaussian channel $y = \sqrt{r} x^* + z$, $x^* \sim \mu$ and $z \sim \normal(0,1)$.
Moreover, if $r>0$, the right-hand side of the above inequality is zero \emph{if and only if} $\mu=\nu$, i.e., the prior $P_{\x}$ is symmetric.
\end{proposition}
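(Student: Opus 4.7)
My plan is to express $\widebar{\psi}(r,r) - \widebar{\psi}(r,-r)$ as a symmetric functional of an odd fluctuation of the posterior mean, and then bound it below via an elementary convexity inequality. I would begin by completing the square inside the integral defining $\widebar{\psi}(r,r)$ to obtain $\widebar{\psi}(r,r) = \E_{y\sim p}\log(p(y)/\gamma(y))$, where $\gamma$ is the standard Gaussian density and $p$ is the density of $y = \sqrt{r}x^* + z$ under $x^* \sim \nu = P_{\x}$. The analogous manipulation for $s = -r$, which amounts to flipping the sign of the signal contribution, yields $\widebar{\psi}(r,-r) = \E_{y\sim p}\log(p(-y)/\gamma(y))$, using evenness of $\gamma$ and the change of variables $y \mapsto -y$. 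Subtracting gives
\[
\widebar{\psi}(r,r) - \widebar{\psi}(r,-r) \;=\; \int p(y)\,\log\frac{p(y)}{p(-y)}\,\rmd y.
\]

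Next I would introduce the even mixture $q(y) := \int \gamma(y-\sqrt{r}x)\,\rmd\mu(x)$ and the Radon--Nikodym derivative $f := \rmd\nu/\rmd\mu$. Since $\mu$ is the symmetrization of $\nu$, one has $f(x) + f(-x) = 2$ $\mu$-almost surely, so $f$ takes values in $[0,2]$. Setting $u(y) := \langle f(x) - 1\rangle_{\mu,r}$ (evaluated at $y$), the interpretation of $\langle\cdot\rangle_{\mu,r}$ as a posterior mean gives
\[
p(y) \;=\; q(y)\,(1+u(y)), \qquad p(-y) \;=\; q(y)\,(1-u(y)),
\]
and in particular $u$ is odd with values in $[-1,1]$. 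Substituting into the previous display and using evenness of $q$ to discard the odd part of the integrand, I would arrive at
\[
\widebar{\psi}(r,r) - \widebar{\psi}(r,-r) \;=\; \int q(y)\,u(y)\,\log\frac{1+u(y)}{1-u(y)}\,\rmd y.
\]
This bookkeeping with the symmetrization is the main obstacle; once the representation above is in place, the rest is routine.

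The elementary inequality $u\log\frac{1+u}{1-u} \ge 2u^2$ on $(-1,1)$, a direct consequence of $\tanh^{-1}(u) \ge u$ for $u \ge 0$, then gives the desired bound, since $\E_{y\sim q}[u(y)^2] = \E[\langle f - 1\rangle_{\mu,r}^2]$ by definition of the channel average. For the equality case when $r > 0$, vanishing of the right-hand side forces $\int (f(x) - 1)\,\gamma(y - \sqrt{r}x)\,\rmd\mu(x) = 0$ for Lebesgue-a.e.\ $y$; since convolution with a Gaussian kernel is injective on finite signed measures (its Fourier symbol never vanishes), the signed measure $(f-1)\,\rmd\mu$ is identically zero, whence $f \equiv 1$ $\mu$-a.s.\ and thus $\nu = \mu$, i.e., $P_{\x}$ is symmetric.
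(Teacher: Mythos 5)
Your proof is correct, and it takes a genuinely different route from the paper's. The paper's argument interpolates in the prior: it sets $\rho_t=(1-t)\mu+t\nu$, defines $\phi(t)=\widebar{\psi}(r,r;t)-\widebar{\psi}(r,-r;t)$, checks $\phi(0)=\phi'(0)=0$ using the symmetry of $\mu$, computes $\phi''(t)=4(1-t)\E\big[\big\langle \tfrac{\rmd(\nu-\mu)}{\rmd\rho_t}(x)\big\rangle^2\big]$ via the Nishimori property, and extracts the bound from the second-order behaviour of $\phi$ at $t=0$. You instead identify $\widebar{\psi}(r,r)-\widebar{\psi}(r,-r)$ with the reflection divergence $\int p(y)\log\tfrac{p(y)}{p(-y)}\,\rmd y$ of the channel output density $p$, factor $p=q(1+u)$ with $q$ even and $u$ odd (your identities $f(x)+f(-x)=2$ $\mu$-a.s.\ and $u(-y)=-u(y)$ both check out, and $u\in(-1,1)$ pointwise since $p>0$ everywhere), reduce to $\int q\,u\log\tfrac{1+u}{1-u}$, and conclude with the pointwise inequality $u\log\tfrac{1+u}{1-u}\ge 2u^2$. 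This is more elementary---no differentiation along a path and no convexity bookkeeping---and it yields an exact intermediate identity (a symmetrized Kullback--Leibler divergence between $p$ and its reflection) that is strictly sharper than the stated lower bound; the paper's route, on the other hand, also delivers monotonicity and convexity of $t\mapsto\phi(t)$, which is extra structure your argument does not produce. Your treatment of the equality case is essentially the same Fourier/injectivity-of-Gaussian-convolution argument as the paper's. The only point you gloss over is the integrability of $q\,\log\tfrac{1+u}{1-u}$ needed to discard the odd part of the integrand; this is immediate from the bounded support of $P_{\x}$, which gives $|\log p(y)|\le K(1+y^2)$, so it is not a genuine gap.
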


Finally, the last statement is given here.
\begin{lemma}\label{monotonicity}
The map $r \mapsto \E \left[\left\langle \frac{\rmd \nu}{\rmd \mu}(x) - 1 \right\rangle_{\mu,r}^2\right]$ is increasing on $\Rp$.
\end{lemma}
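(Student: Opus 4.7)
The plan is to recognize $\langle g(x)\rangle_{\mu,r}$, with $g := \frac{\rmd\nu}{\rmd\mu} - 1$, as the posterior mean of $g(x^*)$ in the scalar Gaussian channel $y_r = \sqrt{r}\,x^* + z$, $x^*\sim\mu$, $z\sim\normal(0,1)$ independent, and then to exploit the fact that a lower-SNR Gaussian channel is a Markov degradation of a higher-SNR one. Observe first that $\mu(A) \ge \nu(A)/2$ for every Borel $A$, so $\frac{\rmd\nu}{\rmd\mu} \le 2$ holds $\mu$-almost surely; hence $g$ is bounded and every quantity below is finite.

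By Bayes' rule, the Gibbs measure $\langle \cdot \rangle_{\mu,r}$ is precisely the conditional law of $x^*$ given $y_r$, so for any bounded measurable $h$ we have $\langle h(x) \rangle_{\mu,r} = \E[h(x^*)\mid y_r]$ almost surely. Setting $M(r) := \E\bigl[\langle g(x)\rangle_{\mu,r}^2\bigr]$, this identifies $M(r) = \E\bigl[(\E[g(x^*)\mid y_r])^2\bigr]$. For $0 \le r_1 \le r_2$, I would build a coupling on a common probability space: sample $x^* \sim \mu$, $z \sim \normal(0,1)$ and $z' \sim \normal(0, 1 - r_1/r_2)$ mutually independent, set $y_{r_2} := \sqrt{r_2}\,x^* + z$, and define $y_{r_1} := \sqrt{r_1/r_2}\,y_{r_2} + z'$. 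A direct variance computation gives $y_{r_1} = \sqrt{r_1}\,x^* + (\sqrt{r_1/r_2}\,z + z')$, where the parenthesized Gaussian has variance $r_1/r_2 + (1-r_1/r_2) = 1$ and is independent of $x^*$; hence $(x^*, y_{r_1})$ has the intended joint distribution. Because $z'$ is independent of $(x^*, z)$, the triple also satisfies the Markov chain $x^* \to y_{r_2} \to y_{r_1}$.

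Finally, the Markov property and the tower rule yield $\E[g(x^*)\mid y_{r_1}] = \E\bigl[\E[g(x^*)\mid y_{r_2}] \,\big|\, y_{r_1}\bigr]$, and applying conditional Jensen's inequality inside the expectation gives
\[
M(r_1) = \E\bigl[(\E[\E[g(x^*)\mid y_{r_2}]\mid y_{r_1}])^2\bigr] \le \E\bigl[(\E[g(x^*)\mid y_{r_2}])^2\bigr] = M(r_2),
\]
which is the desired monotonicity. No step is really an obstacle: the only substantive ingredient is the channel-degradation identity used in the coupling, a one-line Gaussian variance computation, and the rest is Bayes' rule followed by conditional Jensen.
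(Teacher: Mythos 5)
Your proof is correct, but it takes a genuinely different route from the paper's. The paper proves the lemma by differentiating in $r$: using Gaussian integration by parts and the Nishimori property, it shows that
\[
\frac{\rmd}{\rmd r}\,\E \left[\left\langle \tfrac{\rmd \nu}{\rmd \mu}(x) - 1 \right\rangle_{\mu,r}^2\right] \;=\; \E\left[\left\langle x\left(\tfrac{\rmd \nu}{\rmd \mu}(x) - 1\right)\right\rangle_{\mu,r}^2\right],
\]
a manifest square, hence nonnegative. You instead avoid all calculus: you identify $\langle g(x)\rangle_{\mu,r}$ with the posterior mean $\E[g(x^*)\mid y_r]$, realize the lower-SNR observation as a Markov degradation of the higher-SNR one via the coupling $y_{r_1}=\sqrt{r_1/r_2}\,y_{r_2}+z'$, and conclude by the tower property plus conditional Jensen. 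Your argument is sound in every step I checked (the variance bookkeeping in the coupling, the conditional independence $x^*\perp y_{r_1}\mid y_{r_2}$, and the boundedness of $\rmd\nu/\rmd\mu$ via $\nu\le 2\mu$), and it is both more elementary and more general: it shows that $r\mapsto \E[(\E[h(x^*)\mid y_r])^2]$ is non-decreasing for \emph{any} square-integrable $h$, with no reliance on Gaussian integration by parts or the Nishimori property. What the paper's computation buys in exchange is an explicit formula for the derivative, which could be turned into a quantitative rate of increase; your data-processing argument only yields the inequality. One small point applies equally to both proofs: each establishes weak monotonicity (the paper's derivative is a square that may vanish; your Jensen step gives $M(r_1)\le M(r_2)$), which is all that is needed where the lemma is invoked.
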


\begin{proof}
This is a matter of showing that the derivative of the above function is non-negative. By standard manipulations (Gaussian integration by parts, Nishimori property), the derivative can be written as
\[\E\left[\left\langle x\left(\frac{\rmd \nu}{\rmd \mu}(x) - 1\right)\right\rangle_{\mu,r}^2\right].\]
\end{proof}

\begin{proofof}{Proposition~\ref{asymmetry_lower_bound2}}
The argument relies on a smooth interpolation method between the two measures $\mu$ and $\nu$. Let $t \in [0,1]$ and let $\rho_t = (1-t)\mu + t\nu$. Further, let $r,s \ge 0$ be fixed, and
\[\widebar{\psi}(r,s;t) := \E_{z} \int \left(\log \int \exp\left(\sqrt{r}zx + s xx^* - \frac{r}{2} x^2\right) \rmd \rho_t(x)\right) \rmd \rho_t(x^*),\]
where $z \sim \normal(0,1)$. Now let 
\[\phi(t) = \widebar{\psi}(r,r;t) - \widebar{\psi}(r,-r;t).\]
We have $\phi(1) = \widebar{\psi}(r,r) - \widebar{\psi}(r,-r)$ on the one hand, and since $\mu$ is a symmetric distribution, $\phi(0) = 0$ on the other. 
We will show that $\phi$ is a convex increasing function on the interval $[0,1]$, strictly so if $\mu \neq \nu$, and that $\phi'(0) = 0$. Then we  deduce that $\phi(1) \ge \frac{\phi''(0)}{2}$, allowing us to conclude. First, we have
\begin{align*}
\frac{\rmd}{\rmd t} \widebar{\psi}(r,r;t) &= \E_{z} \int \log \int e^{\sqrt{r}zx + r xx^* - \frac{r}{2} x^2} \rmd \rho_t(x) ~\rmd (\nu - \mu)(x^*)\\
&~~~+\E_{z} \int \frac{\int e^{\sqrt{r}zx + r xx^* - \frac{r}{2} x^2}\rmd(\nu - \mu)(x)}{\int e^{\sqrt{r}zx + r xx^* - \frac{r}{2} x^2}\rmd \rho_t(x)} ~\rmd \rho_t(x^*),
\end{align*}
and
\begin{align*}
\frac{\rmd^2}{\rmd t^2} \widebar{\psi}(r,r;t) &= 2\E_{z} \int \frac{\int e^{\sqrt{r}zx + r xx^* - \frac{r}{2} x^2}\rmd(\nu - \mu)(x)}{\int e^{\sqrt{r}zx + r xx^* - \frac{r}{2} x^2}\rmd \rho_t(x)} ~\rmd (\nu - \mu)(x^*)\\
&~~~ -2\E_{z} \int \left(\frac{\int e^{\sqrt{r}zx + r xx^* - \frac{r}{2} x^2}\rmd(\nu - \mu)(x)}{\int e^{\sqrt{r}zx + r xx^* - \frac{r}{2} x^2}\rmd \rho_t(x)}\right)^2 ~\rmd \rho_t(x^*).
\end{align*}
Similar expressions holds for $\widebar{\psi}(r,-r;t)$ where $x^*$ is replaced by $-x^*$ inside the exponentials. We see from the expression of the first derivative at $t=0$ that $\widebar{\psi}(r,r;0)' = \widebar{\psi}(r,-r;0)'$. This is because $\rho_0 = \mu$ is symmetric about the origin, so a sign change (of $x$ for the first term, and $x^*$ for the second term in the expression) does not affect the value of the integrals. Hence $\phi'(0) = 0$. 
Now, we focus on the second derivative. Observe that since $\mu$ is the symmetric part of $\nu$, $\nu - \mu$ is anti-symmetric. This implies that the first term in the expression of the second derivative changes sign under a sign change in $x^*$, and keeps the same modulus. As for the second term, a sign change in $x^*$ induces integration against $\rmd \rho_t(-x^*)$. Hence we can write the difference $(\widebar{\psi}(r,r;t)-\widebar{\psi}(r,-r;t))''$ as  
\begin{align*}
\frac{\rmd^2}{\rmd t^2} \phi(t) &= 4 \E_{z} \int \frac{\int e^{\sqrt{r}zx + r xx^* - \frac{r}{2} x^2}\rmd(\nu - \mu)(x)}{\int e^{\sqrt{r}zx + r xx^* - \frac{r}{2} x^2}\rmd \rho_t(x)} ~\rmd (\nu - \mu)(x^*)\\ 
&~~~-2\E_{z} \int \left(\frac{\int e^{\sqrt{r}zx + r xx^* - \frac{r}{2} x^2}\rmd(\nu - \mu)(x)}{\int e^{\sqrt{r}zx + r xx^* - \frac{r}{2} x^2}\rmd \rho_t(x)}\right)^2 ~(\rmd \rho_t(x^*)-\rmd \rho_t(-x^*)).
\end{align*}
For any Borel $A$, we have $\rho_t(A)- \rho_t(-A) = (1-t)(\mu(A) - \mu(-A)) + t(\nu(A)-\nu(-A)) = 2t(\nu - \mu)(A)$. Therefore the second term in the above expression becomes
\[-4t\E_{z} \int \left(\frac{\int e^{\sqrt{r}zx + r xx^* - \frac{r}{2} x^2}\rmd(\nu - \mu)(x)}{\int e^{\sqrt{r}zx + r xx^* - \frac{r}{2} x^2}\rmd \rho_t(x)}\right)^2 ~\rmd(\nu - \mu)(x^*).\]
Since both $\mu$ and $\nu$ are absolutely continuous with respect to $\rho_t$ for all $0 \le t < 1$ we write
\[\frac{\rmd^2}{\rmd t^2} \phi(t) = 4 \E_{z,x^*} \left\langle \frac{\rmd(\nu - \mu)}{\rmd\rho_t}(x) \frac{\rmd(\nu - \mu)}{\rmd\rho_t}(x^*) \right\rangle - 4 t\E_{z,x^*} \left\langle \frac{\rmd(\nu - \mu)}{\rmd\rho_t}(x)\right\rangle^2,\]
where the Gibbs average is with respect to the posterior of $x$ given $z,x^*$ under the Gaussian channel $y = \sqrt{r}x^* + z$, and the expectation is under $x^* \sim \rho_t$ and $z \sim \normal(0,1)$. By the Nishimori property, we simplify the above expression to
\[\frac{\rmd^2}{\rmd t^2} \phi(t) = 4(1-t)\E \left[\left\langle \frac{\rmd(\nu - \mu)}{\rmd\rho_t}(x)\right\rangle^2\right],\]
where the expression is valid for all $0\le t <1$. From here we see that the function $\phi$ is convex on $[0,1]$ (where we have closed the right end of the interval by continuity). Since $\phi(0) = \phi'(0) = 0$, $\phi$ is also increasing on $[0,1]$. Therefore we have
\[\phi(1) \ge \half\phi''(0) = 2\E \left[\left\langle \frac{\rmd \nu}{\rmd\mu}(x)-1\right\rangle_{\mu,r}^2\right].\]    

Now it remains to show that if $\widebar{\psi}(r,r) = \widebar{\psi}(r,-r)$ for some $r >0$ then $\mu = \nu$. By the lower bound we have shown, equality of $\widebar{\psi}(r,r)$ and $\widebar{\psi}(r,-r)$ would imply
\[\left\langle \frac{\rmd \nu}{\rmd\mu}(x)\right\rangle_{\mu,r} = \frac{\int e^{\sqrt{r}zx + r xx^* - \frac{r}{2} x^2}\rmd\nu(x)}{\int e^{\sqrt{r}zx + r xx^* - \frac{r}{2} x^2}\rmd\mu(x)} = 1\] 
for (Lebesgue-)almost all $z$ and $P_{\x}$-almost all $x^*$. We make the change of variable $z \mapsto \sqrt{r}(z-x^*)$ and complete the squares, then the above is equivalent to
\[\int e^{- \frac{r}{2} (x-z)^2}\rmd\nu(x) = \int e^{- \frac{r}{2} (x-z)^2}\rmd\mu(x)\] 
for almost all $z$. The above expressions are convolutions of the measures $\nu$ and $\mu$ against the Gaussian kernel. By taking the Fourier transform on both sides and using Fubini's theorem, we get equality of the characteristic functions of $\mu$ and $\nu$: for all $\xi \in \R$,
\[\int e^{\imnb \xi x}\rmd \nu(x) = \int e^{\imnb \xi x}\rmd \mu(x).\]
This is because the Fourier transform of the Gaussian (another Gaussian) vanishes nowhere on the real line, thus it can be simplified on both sides.   
This of course implies that $\nu= \mu$, and concludes our proof.
\end{proofof}

\vspace{5mm}
\textbf{Acknowledgments.} 
AE is grateful to L\'{e}o Miolane for insightful conversations. This research effort was initiated at the \emph{Workshop on Statistical physics, Learning, Inference and Networks} at Ecole de Physique des Houches, winter 2017. FK acknowledges funding from the EU (FP/2007-2013/ERC grant agreement 307087-SPARCS). MJ acknowledges the support of the Mathematical Data Science program of the Office of Naval Research under grant number N00014-15-1-2670.


\begin{small}

\bibliographystyle{apalike}
\bibliography{phase_transitions}

\end{small}

\end{document}